\tikzset{
  on each segment/.style={
    decorate,
    decoration={
      show path construction,
      moveto code={},
      lineto code={
        \path [#1]
        (\tikzinputsegmentfirst) -- (\tikzinputsegmentlast);
      },
      curveto code={
        \path [#1] (\tikzinputsegmentfirst)
        .. controls
        (\tikzinputsegmentsupporta) and (\tikzinputsegmentsupportb)
        ..
        (\tikzinputsegmentlast);
      },
      closepath code={
        \path [#1]
        (\tikzinputsegmentfirst) -- (\tikzinputsegmentlast);
      },
    },
  },
  mid arrow/.style={postaction={decorate,decoration={
        markings,
        mark=at position .5 with {\arrow[#1]{stealth}}
      }}},
}
\DeclareMathOperator{\id}{id}
\DeclareMathOperator{\Id}{Id}
\DeclareMathOperator{\rank}{rk}
\DeclareMathOperator{\diam}{diam}
\DeclareMathOperator{\erfc}{erfc}
\DeclareMathOperator{\ess}{ess}
\DeclareMathOperator*{\esssup}{ess\,sup}
\DeclareMathOperator*{\essinf}{ess\,inf}
\newtheorem{theorem}{Theorem}[section]
\newtheorem*{theorem-nonA}{Theorem A}
\newtheorem*{theorem-nonB}{Theorem B}
\newtheorem*{theorem-nonC}{Theorem C}
\newtheorem*{theorem-nonD}{Conjecture D}
\newtheorem*{theorem-nonE}{Theorem E}
\newtheorem*{theorem-nonF}{Theorem F}
\newtheorem{lemma}[theorem]{Lemma}
\newtheorem{proposition}[theorem]{Proposition}
\theoremstyle{definition}
\newtheorem{definition}[theorem]{Definition}
\newtheorem{remark}[theorem]{Remark}
\newcommand{\rmd}{\mathrm{d}}
\newcommand{\rmD}{\mathrm{D}}
\renewcommand{\epsilon}{\varepsilon}
\newcommand{\fa}          {\quad \text{for all } \,}
\newcommand{\faa}          {\quad \text{for almost all } \,}
\newcommand{\R} {\mathbb R}
\numberwithin{equation}{section}
\begin{document}

\title{\vspace*{-10mm}
Hopf bifurcation with additive noise}
\author{Thai Son Doan\thanks{Institute of Mathematics, Vietnam Academy of Science and Technology, 18 Hoang Quoc Viet, Ha Noi, Vietnam} \and Maximilian Engel\thanks{Department of Mathematics, Imperial College London, 180 Queen’s Gate, London SW7 2AZ, United Kingdom
} \and Jeroen S.W.~Lamb\footnotemark[2] \and Martin~Rasmussen\footnotemark[2]}
\date{\today}
\maketitle

\begin{abstract}
We consider the dynamics of a two-dimensional ordinary differential equation exhibiting a Hopf bifurcation subject to additive white noise and
identify three dynamical phases: (I) a random attractor with uniform synchronisation of trajectories, (II) a random attractor with non-uniform synchronisation of trajectories and (III) a random attractor without synchronisation of trajectories. The random attractors in phases (I) and (II)  are random equilibrium points with negative Lyapunov exponents while in phase (III)  there is a so-called random strange attractor with positive Lyapunov exponent.

We analyse the occurrence of the different dynamical phases as a function of the linear stability of the origin (deterministic Hopf bifurcation parameter) and shear (ampitude-phase coupling parameter). We show that small shear implies synchronisation and obtain that synchronisation cannot be uniform in the absence of linear stability at the origin or in the presence of sufficiently strong shear. We provide numerical results in support of a conjecture that irrespective of the linear stability of the origin, there is a critical strength of the shear at  which the system dynamics loses synchronisation and enters phase (III).


\bigskip

\noindent $\textit{Key words}$. Dichotomy spectrum, Hopf bifurcation, Lyapunov exponent, random attractor, random dynamical system, stochastic bifurcation

\noindent $\textit{Mathematics Subject Classification (2010)}$.
37C75, 37D45, 37G35, 37H10, 37H15.

\end{abstract}
\section{Introduction}
We consider the two-dimensional stochastic differential equation
\begin{equation}\label{NormalForm}
\begin{array}{rl}
&\rmd x = (\alpha x - \beta y - (ax-by)(x^2 + y^2))\,\rmd t + \sigma \,\rmd W_t^1\,,\\
&\rmd y =  (\alpha y + \beta x - (bx+ay)(x^2 + y^2))\, \rmd t +  \sigma \,\rmd W_t^2\,,
\end{array}
\end{equation}
where $\sigma \geq 0$ represents the strength of the noise, $\alpha\in\mathbb{R}$ is a parameter equal to the real part of eigenvalues of the linearization of the vector field at $(0,0)$,
$b\in\mathbb{R}$ represents shear strength (amplitude-phase coupling parameter when writing the deterministic part of (\ref{NormalForm}) in polar coordinates), $a > 0$, $\beta\in \mathbb{R}$, and $W_t^1, W_t^2$ denote independent one-dimensional Brownian motions.


In the absence of noise ($\sigma=0$), the stochastic differential equation \eqref{NormalForm} is a normal form for the supercritical Hopf bifurcation:
when $\alpha \leq 0$ the system has a globally attracting equilibrium at $(x,y) = (0,0)$ which is exponentially stable until $\alpha=0$ and, when $\alpha >0$, the system has a limit cycle at $\left\{(x,y) \in \mathbb{R}^2 \,:\, x^2 + y^2 = \alpha/a\right\}$ which is globally attracting on $\mathbb{R}^{2}\setminus{\{0\}}$.

In the presence of noise ($\sigma \neq 0$), statistical information about the (one point) dynamics of \eqref{NormalForm} can be described by the Fokker--Planck equation and its stationary density. In this case, the stationary density can be calculated analytically, yielding
\begin{equation}\label{StatDens1}
\frac{ 2 \sqrt{2 a} }{\sqrt{\pi}\sigma \erfc (- \alpha/\sqrt{2 a \sigma^2}   )}\exp\left(\frac{2\alpha(x^2+y^2)-a(x^2+y^2)^2}{2\sigma^2}\right)\,.
\end{equation}
We note in particular
that this density
does not depend on the shear parameter $b$.

We observe a clear relation between the stationary measures in the presence of noise ($\sigma>0$) and the attractors in the deterministic limit: the stationary density is maximal on attractors of the deterministic limit dynamics and (locally) minimal on its repellers, see Figure~\ref{FigHopfStat}.
\begin{figure}[ht!]
\centering
\begin{subfigure}[b]{.45\textwidth}
  \centering
  \includegraphics[width=1\linewidth]{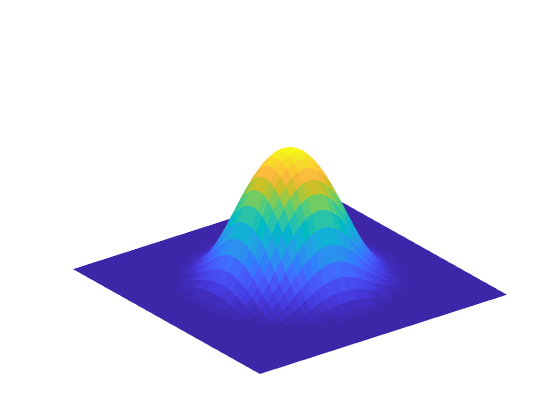}
  \caption{$\alpha < 0$}
   \vspace{1.0\baselineskip}
  \label{fig1_1}
\end{subfigure}%
\hfill
\begin{subfigure}[b]{.45\textwidth}
  \centering
  \includegraphics[width=1\linewidth]{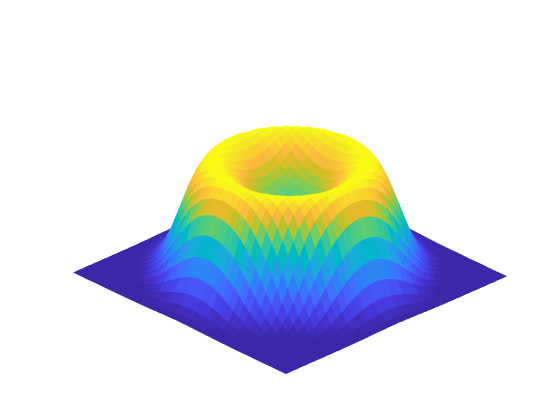}
  \caption{$\alpha  >0$}
  \vspace{1.0\baselineskip}
  \label{fig1_4}
  \end{subfigure}
\hfill
\begin{subfigure}[b]{.45\textwidth}
\centering
\begin{tikzpicture}[scale=1]
\draw[fill]
(0,0) circle (.25ex);
\draw [->,domain=0:25,variable=\t,samples=500]
        plot ({\t r}: {2*exp(-0.1*\t)});
\end{tikzpicture}
 \vspace{0.5\baselineskip}
  \caption{$\alpha < 0$}
  \label{det_-1}
\end{subfigure}%
\hfill
\begin{subfigure}[b]{.45\textwidth}
  \centering
  \begin{tikzpicture}[scale=1]
 \draw
(0,0) circle (.25ex);
\draw [->,domain=10:25,variable=\t,samples=500]
        plot ({\t r}: {1+2*exp(-0.1*\t)});
\draw [->,domain=1:18,variable=\t,samples=500]
        plot ({\t r}: {1-exp(-0.1*\t)});
\path [draw, thick, postaction={on each segment={mid arrow}}]
(0,0) circle [radius =1];
\end{tikzpicture}
 \vspace{0.5\baselineskip}
  \caption{$\alpha > 0$}
  \label{det_1}
\end{subfigure}%
\caption{\label{FigHopfStat} Shape of the stationary density of (\ref{NormalForm}) with noise and corresponding phase portraits of the deterministic limit. The qualitative features only depend on the sign of
the linear stability parameter $\alpha$. Figures (a) and (b) present the shapes of the stationary densities in the presence of noise. (a) is charactised by a unique maximum at the origin and (b) by a local minimum at the origin surrounded by a circle of maxima when $\alpha>0$. Figures (c) and (d) show phase portraits in the determinstic limit $\sigma=0$ displaying an attracting equilibrium if $\alpha<0$ and an attracting limit cycle if $\alpha>0$, precisely where stationary densities have their maxima.}
\end{figure}

From Figure~\ref{FigHopfStat} it is natural to propose that the stochastic differential equation (\ref{NormalForm}) has a bifurcation at $\alpha=0$, represented by the qualitative change of the shape of the stationary density.  Such kind of bifurcation is called a \emph{phenomenological bifurcation}, cf.~\cite{a98}.

In this paper, we consider the system (\ref{NormalForm}) with noise from a random dynamical systems point of view: with a canonical model for the noise,  (\ref{NormalForm}) can be represented as a dynamical system that is driven by a random signal.

While the stationary density (\ref{StatDens1}) provides certain statistics about the dynamics of (\ref{NormalForm}), by the fact that the underlying Markov process only models probabilistically a single time-series,
many relevant dynamical properties cannot be captured, such as a comparison of the trajectories of nearby initial conditions (with the same noise).

As trajectories of random dynamical systems depend on the noise realisation, one does not a priori expect any asymptotic long-term convergent behaviour of individual trajectories to a fixed attractor. An alternative view point that circumvents this problem and often yields convergence, is to consider, for a fixed noise realisation in the past, the flow of a set of initial conditions from time $t=-T$ to a fixed endpoint in time, say $t=0$, and then take the (pullback) limit $T\to\infty$. If trajectories of initial conditions converge under this procedure to some set, then this set is called a \emph{pullback attractor}. To illustrate the pullback dynamics of \eqref{NormalForm}, in Figure~\ref{fig:chaosNF}, we present some numerical examples\footnote{The simulations in this paper are based on an explicit Euler--Maruyama integration of the stochastic differential equation~\eqref{NormalForm}, usually with time step size $10^{-3}$. When we compute Lyapunov exponents, we use an explicit second-order Runge--Kutta method for integrating the variational equation.}.
We observe two distinctly different behaviours: either all initial conditions converge to a fixed point, see (a)--(d), or all initial conditions converge to a rather complicated object, see (e)--(h). The former is indicative of the phenomenon of  \emph{synchronisation}, i.e.~convergence of all trajectories to a single random equilibrium point, while the latter points to a random strange attractor.

\begin{figure}[H]
\centering
\begin{subfigure}[b]{.25\textwidth}
  \centering
  \includegraphics[width=1\linewidth]{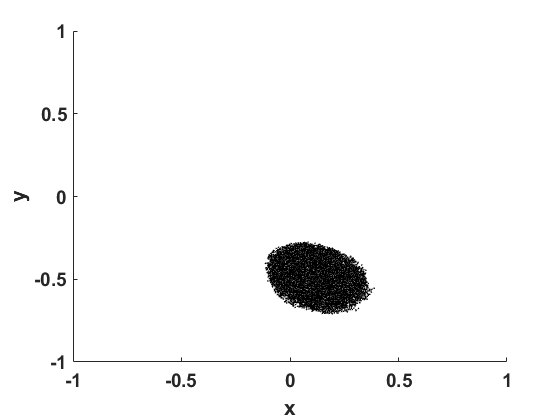}
  \caption{$\alpha=-1,b=1,T=5$}
  \label{synchro2_nf_alpha-1.png}
\end{subfigure}%
\begin{subfigure}[b]{.25\textwidth}
  \centering
  \includegraphics[width=1\linewidth]{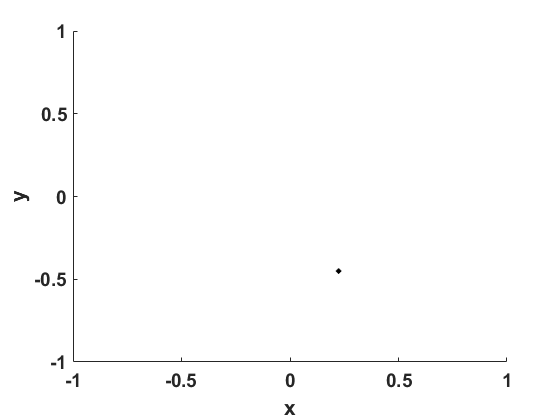}
  \caption{$\alpha=-1,b=1,T=50$}
  \label{synchro3_nf_alpha-1.png}
\end{subfigure}%
\begin{subfigure}[b]{.25\textwidth}
  \centering
  \includegraphics[width=1\linewidth]{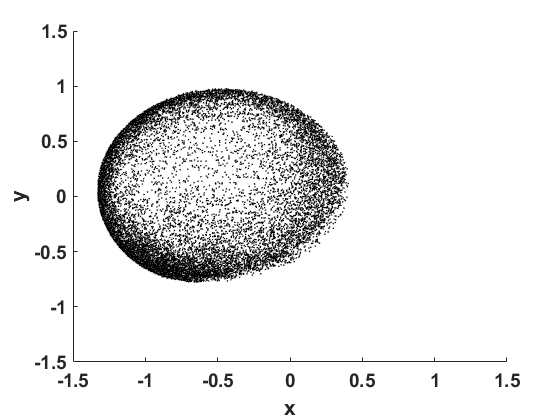}
  \caption{$\alpha=1,b=1,T=5$}
  \label{synchro2_nf}
\end{subfigure}%
\begin{subfigure}[b]{.25\textwidth}
  \centering
  \includegraphics[width=1\linewidth]{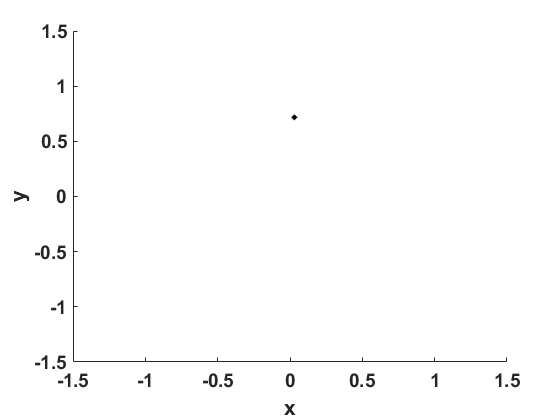}
  \caption{$\alpha=1,b=1,T=50$}
  \label{synchro4_nf}
\end{subfigure}%
\hfill
\begin{subfigure}{.25\textwidth}
  \centering
  \includegraphics[width=1\linewidth]{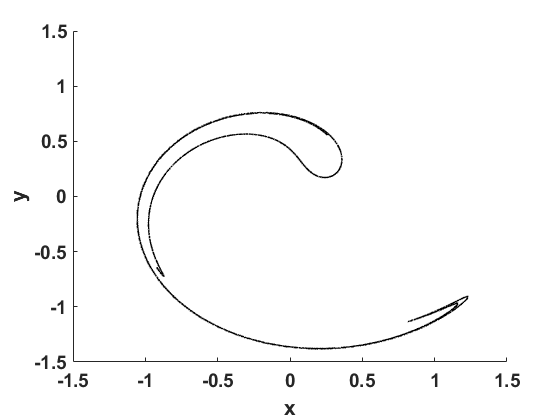}
  \caption{$\alpha=-1,b=20,T=5$}
  \label{strange2_nf_alpha-1.png}
\end{subfigure}%
\begin{subfigure}{.25\textwidth}
  \centering
  \includegraphics[width=1\linewidth]{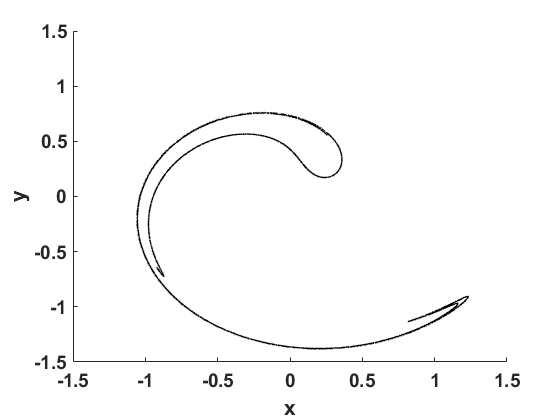}
  \caption{$\alpha=-1,b=20,T=50$}
  \label{strange3_nf_alpha-1.png}
\end{subfigure}%
\hfill
\begin{subfigure}{.25\textwidth}
  \centering
  \includegraphics[width=1\linewidth]{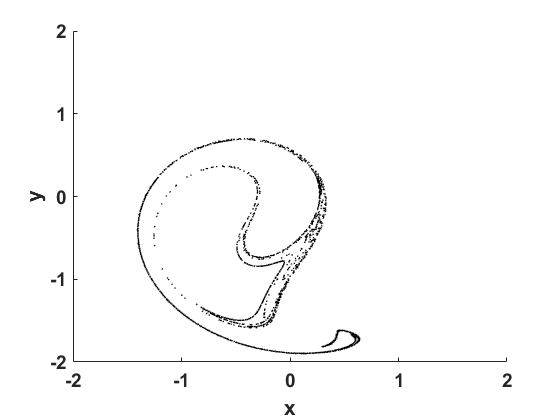}
  \caption{$\alpha=1,b=8,T=5$}
  \label{strange2_nf}
\end{subfigure}%
\begin{subfigure}{.25\textwidth}
  \centering
  \includegraphics[width=1\linewidth]{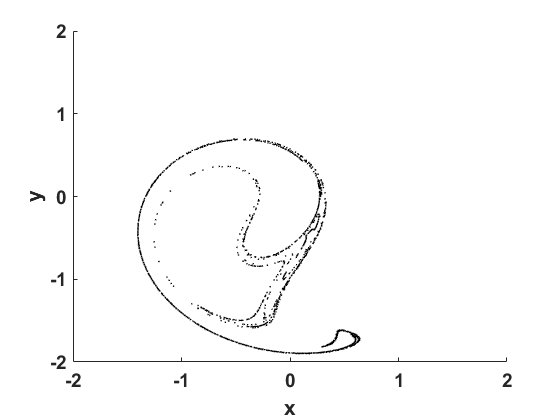}
  \caption{$\alpha=1,b=8,T=50$}
  \label{strange4_nf}
\end{subfigure}%
\caption{
Pullback dynamics of \eqref{NormalForm} with $\sigma=\beta=a=1$ for initial conditions chosen in approximation of the stationary density. In (a)--(d), in the presence of small shear we observe synchronisation, i.e.~pullback convergence of all trajectories to a single point, irrespective of the linear stability at the origin. In (e)--(h), in the presence of sufficiently large shear there is no synchronisation but pullback convergence to a more complicated object (random strange attractor), again irrespective of the linear stability at the origin.
}
\label{fig:chaosNF}
\end{figure}
The differences between the types of pullback attractor can also be observed from the Lyapunov exponents, representing the asymptotic long-term average derivative along trajectories. Roughly speaking, random attractors with negative Lyapunov exponents are associated with synchronisation and a positive Lyapunov exponent impedes synchronisation. Accordingly, in Figure~\ref{fig:chaosNF} (a)--(d) we have negative Lyapunov exponents and in (e)--(h) the largest Lyapunov exponent is positive. In Figure~\ref{fig:posLyap} we present a numerical investigation of the top Lyapunov exponent as a function of the relevant parameters. We note that, in contrast with the deterministic and statistical transitions at $\alpha=0$, the change of sign of the top Lyapunov exponent is indicative of a \emph{dynamical bifurcation}, cf.~\cite{a98}, arises along an altogether different curve in the $(b,\alpha)$ phase diagram. In particular, we note that as the stationary density is independent of $b$, different dynamical behaviours underly identical stationary measures, reconfirming our earlier claim that the one-point Markov process and associated stationary measure only provide partial information about the dynamics of a random dynamical system.
\begin{figure}[H]
\centering
  \includegraphics[width=0.5\linewidth]{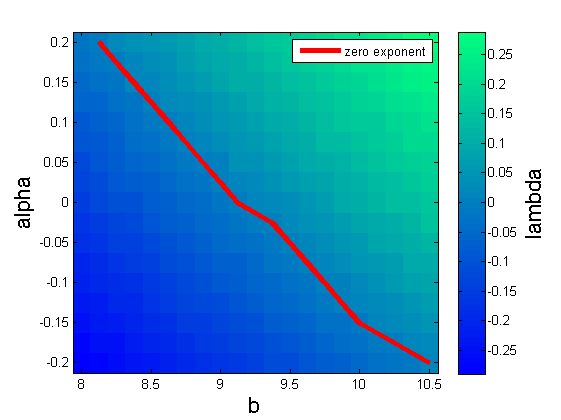}
\caption{Numerical approximation of the top Lyapunov exponent for system~\eqref{NormalForm} as a function of the linear stability at the  origin ($\alpha$) and strength of shear ($b$) with $a= \beta = \sigma =1$.
The red curve highlights the border between regions with negative and positive top Lyapunov exponents, corresponding to synchronisation or random strange attractor, respectively.
}
\label{fig:posLyap}
\end{figure}

Finally, we address a more subtle differentiation between two types of synchronisation that may arise. Synchronisation may be \emph{uniform}, so that trajectories are guaranteed to approximate each other bounded by upper estimates that are independent of the noise realisation, or \emph{non-uniform}, when such uniform upper estimates do not exist. In the latter case, the time it takes for two trajectories to converge up to a certain given margin is bounded for any fixed noise realisation, but assessed over all noise realisations these bounds have no maximum.
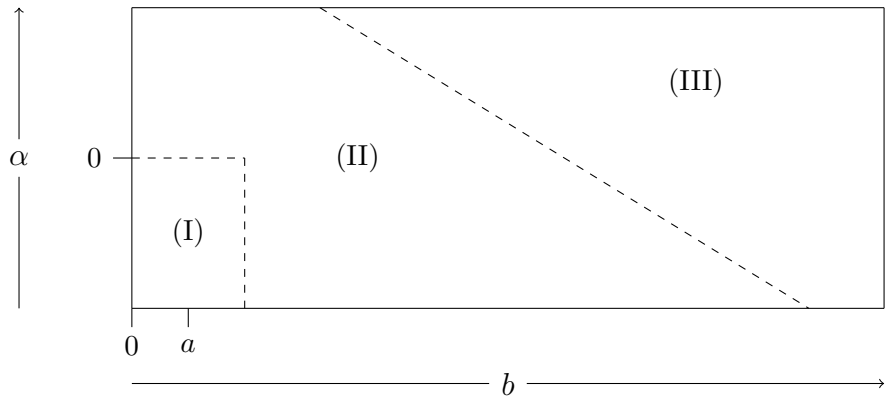
\begin{figure}[H]
\vspace{1.0\baselineskip}
\centering
\begin{tikzpicture}[scale =1]
    \draw (-5,2)--(5,2);
    \draw (-5,-2)--(5,-2);
    \draw  (-5,-2) -- (-5,2);
    \draw  (5,-2) -- (5,2);
    \draw[dashed] (-5,0)--(-3.5,0);
    \draw[dashed] (-3.5,0)--(-3.5,-2);
    \node at (-4.25,-1) {(I)};
    \node at (-2,0) { (II)};
    \draw[dashed] (-2.5,2)--(4,-2);
    \node at (2.5,1) { (III)};
    \draw (-4.25,-2)--(-4.25,-2.25);
    \node at (-4.25,-2.5) { $a$};
    \draw (-5,0)--(-5.25,0);
    \node at (-5.5,0) { $0$};
    \draw (-5,-2)--(-5,-2.25);
    \node at (-5,-2.5) { $0$};
    \node at (-6.5,0) {\large $\alpha$};
    \node at (0,-3) {\large $b$};
    \draw  (-5,-3) -- (-0.25,-3);
    \draw[->] (0.25,-3) -- (5,-3);
    \draw  (-6.5,-2) -- (-6.5,-0.25);
    \draw[->] (-6.5,0.25) -- (-6.5,2);
  \end{tikzpicture}
  \caption{For $a,\beta, \sigma$ fixed, we partition the $(b,\alpha)$-parameter space associated with \eqref{NormalForm} into three parts with different stability behaviour. Region (I) represents uniform synchronisation, only possible for non-positive $\alpha$ and small $b$. In region (II), we observe non-uniform synchronisation, i.e.~finite-time instabilities occur, but the asymptotic behaviour is exponentially stable for almost all trajectories. (The border between (I) and (II) is described in Theorems E and F.) Region (III) exhibits a positive top Lyapunov exponent and the absence of synchronisation since the shear is large enough for locally unstable behaviour to prevail (cf.~Conjecture~D).}
\label{fig:phasediag}
\end{figure}
\begin{figure}[H]
\centering
\begin{subfigure}{.4\textwidth}
  \centering
  \includegraphics[width=1\linewidth]{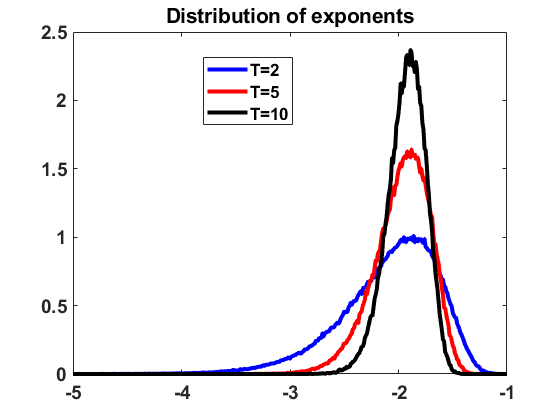}
  \caption{$b=1, \alpha =-1$}
  \label{b1alpha-1}
\end{subfigure}%
\begin{subfigure}{.4\textwidth}
  \centering
  \includegraphics[width=1\linewidth]{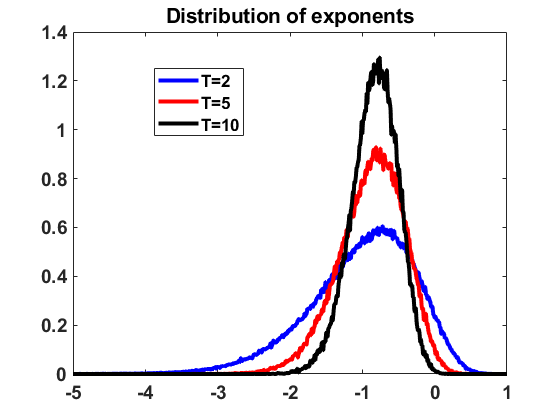}
  \caption{$b=1, \alpha=1$}
  \label{b1alpha1}
\end{subfigure}%
\hfill
\begin{subfigure}{.4\textwidth}
  \centering
  \includegraphics[width=1\linewidth]{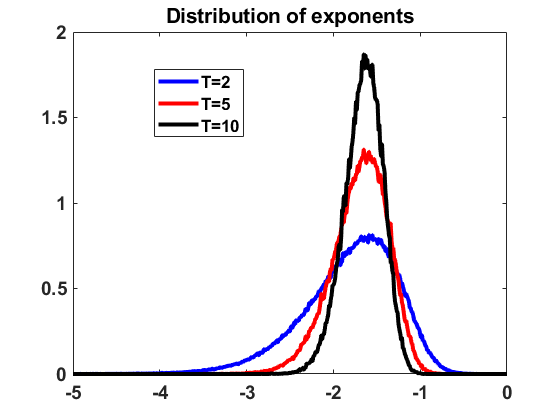}
  \caption{$b=3, \alpha =-1$}
  \label{b3alpha-1}
\end{subfigure}%
\begin{subfigure}{.4\textwidth}
  \centering
  \includegraphics[width=1\linewidth]{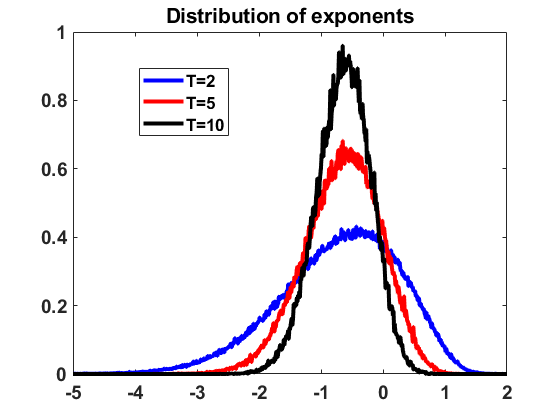}
  \caption{$b =3, \alpha=1$}
  \label{b3alpha1}
\end{subfigure}%
\hfill
\begin{subfigure}{.4\textwidth}
  \centering
  \includegraphics[width=1\linewidth]{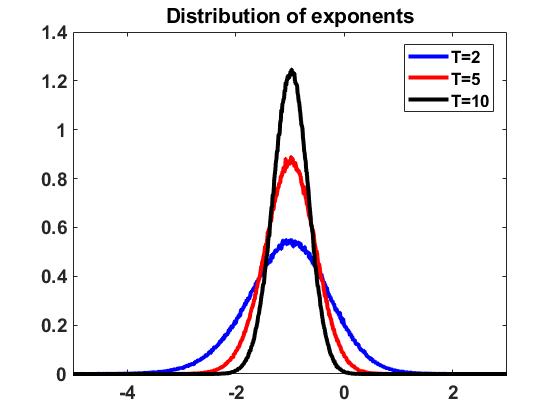}
  \caption{$b =8, \alpha =-1$}
  \label{b8alpha-1}
\end{subfigure}%
\begin{subfigure}{.4\textwidth}
  \centering
  \includegraphics[width=1\linewidth]{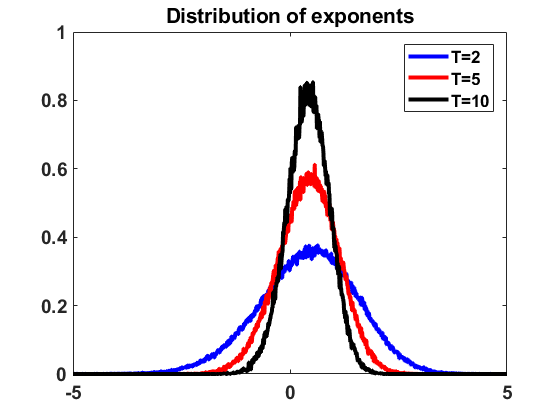}
  \caption{$b=8, \alpha=1$}
  \label{b8alpha1}
\end{subfigure}%
\caption{Distribution of finite-time ($T>0$) Lyapunov exponents of (\ref{NormalForm}) with $a=\beta=\sigma =1$,  $\alpha \in\{-1,1\}$, $T\in\{2,5,10\}$ and $b\in\{1,3,8\}$, illustrating the type of distributions in phases (I) uniform synchronisation (a), (c); (II) non-uniform synchronisation (b), (d), (e); and (III) absence of synchronisation (f).}
\label{finitetimedist}
\end{figure}
It turns out that the uniformity of the synchronisation is related with the distribution of \emph{finite-time Lyapunov exponents}, reflecting the average derivatives along trajectories for finite time. The (unique) top Lyapunov exponent of an attractor is associated with the limit of the distribution of finite-time Lyapunov exponents as the time over which derivatives are averaged goes to infinity. Importantly, while this distribution converges to a Dirac measure concentrated in the top Lyapunov exponent, the support of this distribution typically converges to a wider range. If this range is contained entirely within the negative real axis, synchronisation is uniform. But it may also happen that the top Lyapunov is negative while the limit of the support of finite-time Lyapunov exponents extends into the positive half line, which results in non-uniform synchronisation. In Figure~\ref{finitetimedist}, these scenarios are illustrated with numerical computations. It is natural to find an interface with non-uniform synchronisation in the phase diagram between uniform synchronisation regions and regions without synchronisation. For a sketch of the corresponding phase diagram regions for (\ref{NormalForm}), see Figure~\ref{fig:phasediag}.

The main aim of this paper is to provide a precise mathematical analysis to describe and explain the observations sketched above.
Numerical investigations by Lin and Young \cite{ly08}, Wieczorek \cite{w09} and Deville {\em et al} \cite{dsr11} already highlighted the fact that shear can
cause Lyapunov exponents to become positive and induce chaotic behaviour.  A first analytical proof of this phenomenon has been given by us in \cite{elr16} in the case of a stochastically driven limit cycle on the cylinder. A prove of shear-induced chaos with periodic driving (kicks) was given before by Wang and Young \cite{wy03}. The stability properties of the stochastic system for small noise limits and small shear have been studied in \cite{dsr11} as an example of a non-Hamiltonian system perturbed by noise. The authors also conjectured asymptotic instabilities represented by a positive top Lyapunov exponent. They did not prove implications for the associated random dynamical system in terms of its random attractor and invariant measure which is the subject of the first part of this paper.

%
%

We first establish (Theorem A) that the stochastic differential equation \eqref{NormalForm} induces a random dynamical system and possesses a random attractor for all choices of parameters. Using results from \cite{fgs16}, we show that a negative top Lyapunov exponent implies the random attractor being a random equilibrium.  We then prove (Theorem B) the synchronisation of almost all trajectories from all initial conditions in forward time with exponential speed. We also achieve an explicit upper bound for the shear as a function of other parameters for having a negative top Lyapunov exponent (Theorem C), extending results in \cite{dsr11} to the full parameter space.

%
%
We finally assert (Conjecture D) the appearance of a positive top Lyapunov exponent beyond a critical shear levels for any given value of $\alpha$, cf.~Figure~\ref{fig:posLyap}. This would in turn imply the existence of a random strange attractor with positive entropy and SRB sample measures. Based on numerical evidence, we conjecture this scenario also for negative $\alpha$ which is remarkable in view of the fact that in the literature shear-induced chaos is associated with random perturbations of limit cycles and not equilibria.


%
The second part of this paper focuses on parameter-dependence of finite-time Lyapunov exponents and uniform attractivity and the dichotomy spectrum associated with the linear random dynamical system on the tangent space along trajectories. In the case of small shear, we establish (Theorem E) the existence of a bifurcation at the deterministic Hopf parameter value $\alpha=0$ from a global uniformally attractive random equilibrium ($\alpha<0$) to a non-uniformly attractive random equilibrium ($\alpha>0$). This bifurcation is accompanied by the emergence of positive finite-time Lyapunov exponents and a loss of hyperbolicity of the associated \emph{dichotomy spectrum} $\Sigma= [- \infty, \alpha]$. 
This result provides an example of the bifurcation scenario proposed in \cite{cdlr16}, highlighting the importance of new notions of bifurcation to complement the deterministic ones, by showing that despite the persistence of random equilibria, additive noise does not necessarily "destroy" bifurcations, cf.~\cite{cf98}.

Finally, we establish (Theorem F) the existence of positive finite-time Lyapunov exponents. In particular, we show that for any $\alpha \in \mathbb{R}$, there exist arbitrarily large finite-time Lyapunov exponents for sufficiently strong shear intensity $b$. This is the first analytical result on shear-induced chaos in \eqref{NormalForm}. It is in general challenging to obtain lower bounds for the top Lyapunov exponent in dimension larger than one due to the subadditivity property of matrices, cf.~\cite{y08}. Therefore analytical results on positive Lyapunov exponents for random dynamical systems have only been achieved in certain special cases, like in simple time-discrete models \cite{ls12}, certain linear models \cite{elr16} and under special circumstances enabling for stochastic averaging arguments \cite{b04, bg02}. It remains an open problem to prove Conjecture D.

The results of this paper are part of an emerging bifurcation theory for random dynamical systems. Earlier attempts to develop such a theory (notably by Ludwig Arnold, Peter Baxendale and coworkers \cite{a98, ass96, b94, sh96} in the 1990s) resulted in notions of so-called \emph{phenomenological} (or "P") bifurcations and \emph{dynamical} (or "D") bifurcations, but our research and that of others \cite{acd16,cdlr16,drk15,Lamb_15_1,Zmarrou_07_1} suggest that these do not comprehensively capture the intricacies of bifurcation in random dynamical systems. Despite its relevance for many applications of topical interest, a bifurcation theory of random dynamical systems is still in its infancy and often remains restricted to the detailed analysis of relatively elementary examples. Many studies in the context of stochastic Hopf bifurcation have considered the Duffing--van der Pol oscillator with multiplicative white noise \cite{ass96, sh96, sh962}. Few rigorous results have been obtained and most studies only led to conjectures based on numerical observations \cite{ko99}. Our model is exemplary in the following sense: firstly, it discusses the typical phenomenon of random systems to exhibit a transition between synchronisation and chaos. Secondly, the normal form is locally equivalent to that of a generic deterministic Hopf bifurcation and, hence, at least for small noise, we can expect other examples of Hopf bifurcation to feature similar behaviour.

This paper is organised as follows. Section~\ref{intro} comprehensively introduces the technical framework and formulates the main results of this paper. Section~\ref{Generation} is dedicated to a detailed of proof of Theorem A, establishing the existence of a random attractor for all parameters. In Section~\ref{Hopf} we prove Theorems B and C and show some statistical properties of the random equilibrium. In conclusion, Section~\ref{Random Hopf} contains the proofs of Theorems E and F highlighting different aspects of the random bifurcations in $\alpha$ and $b$. We also provide an Appendix with background material on random dynamical systems comprising the most relevant definitions and results used in this paper.

\section{Statement of the main results} \label{intro}
The stochastic differential equation \eqref{NormalForm} can be rewritten as
\begin{equation}\label{MainEq}
  \rmd Z_t= f(Z_t)\rmd t
  +
  \sigma \rmd W_t\,,
\end{equation}
where $Z_t=(x_t,y_t)^\top$ and $W_t=(\rmd W^1_t, \rmd W^2_t)^\top$, and the function $f:\mathbb R^2\rightarrow \mathbb R^2$ is defined by
\[
f(Z):=
\left(
\begin{array}{ll}
\alpha & -\beta\\
\beta & \alpha
\end{array}
\right) Z- (x^2+y^2) \left(
\begin{array}{ll}
a & -b\\
b & a
\end{array}
\right)Z\,.
\]
To investigate sample path properties of the solutions of \eqref{NormalForm}, it is convenient to work with the canonical sample path space of Brownian motions. Let $\Omega=C_0(\mathbb R,\mathbb R^2)$ be the space of all continuous functions $\omega:\mathbb R\rightarrow \mathbb R^2$ satisfying that $\omega(0)=0$.
We endow $\Omega$ with the compact open topology and denote by $\mathcal F=\mathcal B(\Omega)$ the Borel $\sigma$-algebra on $\Omega$.

It is well known that there exits the so-called \emph{Wiener probability measure} $\mathbb P$ on $(\Omega,\mathcal F)$ which ensures that the two process $(W_t^1)_{t\in\mathbb R}$ and $(W_t^2)_{t\in\mathbb R}$, defined by $(W_t^1(\omega), W_t^2(\omega))^{\top}:=\omega(t)$ for $\omega\in\Omega$, are independent one-dimensional Brownian motions. We define the sub $\sigma$-algebra $\mathcal{F}_{s,t}$ as the $\sigma$-algebra generated by $\omega(u)- \omega(v)$ for $s \leq v \leq u \leq t$. For each $t\in\mathbb R$, we define the shift map $\theta_t:\Omega\rightarrow \Omega$ by
\begin{equation*}
(\theta_t \omega)(s) = \omega(s+t) - \omega(t) \fa s \in \R\,.
\end{equation*}
It is well known that $(\theta_t)_{t\in\mathbb R}$ is an ergodic flow preserving the probability measure $\mathbb P$, see e.g.~\cite{a98}. Thus,  $(\Omega,\mathcal F,\mathbb P,(\theta_t)_{t\in \mathbb R})$ is an ergodic dynamical system.
\subsection{Generation of a random dynamical system with a random attractor}

Given $\omega\in\Omega$, an initial value $Z\in\mathbb R^2$ and $T>0$, we say that a continuous function $\varphi(\cdot,\omega,Z):[0,T]\rightarrow \mathbb R^2$ solves the stochastic differential equation \eqref{NormalForm} if it satisfies the integral equation
\[
\varphi(t,\omega,Z)
=
Z
+
 \int_0^t f(\varphi(s,\omega,Z))\,\rmd s
+
\sigma \omega(t)\fa t\in [0,T]\,.
\]
The first result in this paper concerns global existence of solutions of \eqref{NormalForm} for almost every sample path, implying that the solutions do not blow up in forward time. We show that the solutions of \eqref{NormalForm} generate a random dynamical system $(\theta, \varphi)$ (see \cite[Definition 1.1.1]{a98} for a general definition). This means that the ($\mathcal{B}(\mathbb{R}_0^+)\otimes \mathcal{F} \otimes \mathcal{B}(\mathbb R^2)$, $\mathcal{B}(\mathbb R^2)$)-measurable mapping $\varphi: \mathbb{R}_0^+ \times \Omega \times \mathbb R^2 \to \mathbb R^2, (t, \omega, x) \mapsto \varphi(t, \omega,x)$, is a cocycle over $\theta$, i.e.
\[
  \varphi(0, \omega, \cdot) \equiv \Id \quad \text{and} \quad \varphi(t+s, \omega,x) = \varphi(t, \theta_s \omega,\varphi(s, \omega,x)) \fa \omega \in \Omega, x \in \R^2 \text{ and } t, s \ge 0\,.
\]
In addition to the generation of a random dynamical system, the following theorem addresses also the existence of a random attractor (see Appendix~\ref{sec-randomattr} for a definition).

\begin{theorem-nonA}[Generation of a random dynamical system with a random attractor]
  For the stochastic differential equation \eqref{NormalForm}, there exists a $\theta$-invariant $\mathcal F$-measurable set $\widehat\Omega\subset \Omega$ of full probability  such that the following statements hold.
  \begin{itemize}
  \item [(i)]  For all $\omega\in\widehat \Omega$ and $Z\in\mathbb R^2$, the stochastic differential equation \eqref{NormalForm} admits a unique solution $\varphi(\cdot,\omega,Z)$ such that $\varphi$ forms a cocycle for  a random dynamical system on $(\widehat \Omega,\mathcal F,\mathbb P, (\theta_t)_{t\in\mathbb R})$.
  \item [(ii)] There exists a random attractor $A\in \mathcal F\otimes \mathcal B(\R^2)$ of the random dynamical system $(\theta,\varphi)$ such that $\omega \mapsto A(\omega)$ is measurable with respect to $\mathcal{F}_{-\infty}^{0}$, i.e.~the past of the system.
  \end{itemize}
\end{theorem-nonA}

Since the difference of the spaces $\Omega$ and $\widehat \Omega$ is a set of measure zero, we identify both in the following.

\subsection{Negativity of top Lyapunov exponent and synchronisation}

The following results concern the asymptotic behaviour of trajectories, in particular their stability properties. This will give information about the structure of the random attractor $A$ associated with the stochastic differential equation~\eqref{NormalForm}.

To analyse asymptotic stability, we study the linearisation $\Phi(t,\omega,Z):=\rmD_x\varphi(t,\omega,Z)$. A direct computation yields that $\Phi(0,\omega,Z)=\Id$ and
\begin{equation}\label{Linearization}
\dot\Phi(t,\omega,Z)=
\rmD f(\varphi(t,\omega,Z))\Phi(t,\omega,Z)\,.
\end{equation}
It is easy to observe that $\Phi$ is a linear cocycle over the skew product flow $(\Theta_t)_{t \in\R^+_0}$ on $\Omega\times \mathbb R^2$, defined by
\[
\Theta_t(\omega,Z):= (\theta_t\omega, \varphi(t,\omega,Z))\,.
\]
In fact,  $(\Theta, \Phi)$ is a linear random dynamical system, where the ergodic dynamical system $(\theta_t)_{t\in\R}$ is replaced by $(\Theta_t)_{t \in\R^+_0}$. We obtain an ergodic probability measure for the skew product flow $(\Theta_t)_{t \in\R^+_0}$ by using the fact that there exists a one-to-one correspondence between the stationary measure $\rho$ for the Markov semigroup associated to \eqref{MainEq} and a certain invariant measure of $(\Theta_t)_{t \in\R^+_0}$.

In more detail, recall from \eqref{StatDens1} that the density of the unique stationary distribution $\rho$  reads as
\begin{equation} \label{StatDens}
  p(x,y) = K_{a,\alpha,\sigma}  \exp \left( \frac{2 \alpha (x^2 + y^2) - a(x^2 + y^2)^2}{2 \sigma^2} \right) \fa (x,y)\in\R^2\,,
\end{equation}
where $K_{a,\alpha,\sigma} >0$ is the normalisation constant and is given by
\begin{equation*}
K_{a,\alpha,\sigma} = \frac{ 2 \sqrt{2 a} }{\sqrt{\pi}\sigma \erfc \left(- \frac{\alpha}{\sqrt{2 a \sigma^2}}   \right)}\,.
\end{equation*}
The stationary measure $\rho$ gives rise to an invariant measure $\mu$ for $(\Theta_t)_{t \in\R^+_0}$ on $\Omega\times \mathbb R^2$ in the following sense: the push-forward limit
\begin{equation*}
  \mu_{\omega} := \lim_{t \to \infty} \varphi(t, \theta_{-t}\omega)\rho
\end{equation*}
exists for almost all $\omega\in\Omega$ and is an $\mathcal{F}_{-\infty}^{0}$-measurable random measure, i.e.~$\omega\mapsto \mu_\omega(B)$ is $\mathcal{F}_{-\infty}^{0}$-measurable for any $B\in\mathcal B(\R^2)$.
This defines a \emph{Markov measure} $\mu$ on $(\Omega\times\mathbb R^2,\mathcal F\otimes \mathcal B(\mathbb R^2))$ via
\[
\mu(C)
:=\int_{\Omega}
\mu_{\omega}(C_{\omega})\; \rmd \mathbb P(\omega) \fa C\in \mathcal F\otimes \mathcal B(\mathbb R^2)\,,
\]
where $C_{\omega}:=\{Z\in\mathbb R^2: (\omega,Z)\in C\}$. $\mu$ is invariant under $(\Theta_t)_{t \in\R^+_0}$ (see e.g.~\cite{c91}).
Reversely, the stationary measure $\rho$ is given by
\begin{equation} \label{Expectedmeasure}
\rho(B) = \int_{\Omega}
\mu_{\omega}(B)\; \rmd \mathbb P(\omega) \fa  B \in \mathcal{B}(\mathbb{R}^2)\,.
\end{equation}
The uniqueness of the stationary measure $\rho$ with density $p(x,y)$ implies that the invariant measure~$\mu$ is ergodic. We will see in Proposition \ref{Singularity} that the linear system $\Phi$ defined in \eqref{Linearization} satisfies the integrability condition
\[
\sup_{0 \leq t \leq 1} \ln^+ \| \Phi(t, \omega,Z) \| \in L^1(\mu)\,.
\]
Therefore, we can apply Oseledets' Multiplicative Ergodic Theorem (see Appendix~\ref{LyapSpec}) to obtain the Lyapunov spectrum of the linear random dynamical system $(\Theta, \Phi)$. In particular, the top Lyapunov exponent is given by
\begin{equation}\label{TopLE}
\lambda_{\mathrm{top}}
:=
\lim_{t \to \infty}
\frac{1}{t} \ln\|\Phi(t,\omega,Z)\|\quad \hbox{for } \mu\hbox{-almost all } (\omega,Z)\in\Omega\times \mathbb R^2\,.
\end{equation}
The top Lyapunov exponent allows to characterise synchronisation for the random dynamical system generated by \eqref{NormalForm}, i.e.~if for all $Z_1,Z_2\in\mathbb R^2$, we have
\[
\lim_{t \to \infty}\|\varphi(t,\omega,Z_1)-\varphi(t,\omega,Z_2)\|=0\faa \omega\in\Omega\,.
\]
\begin{theorem-nonB}[Existence of random equilibrium and synchronisation of trajectories]
  Suppose that $\lambda_{\mathrm {top}} <0$. Then the random attractor $A$ for the stochastic differential equation~(\ref{NormalForm}) is given by a random equilibrium, i.e.~$A(\omega)$ is a singleton for almost all $\omega\in\Omega$. In addition, the the stochastic differential equation~(\ref{NormalForm}) admits exponentially fast synchronisation, i.e.~for all $Z_1,Z_2\in\mathbb R^2$, we have
  \[
  \limsup_{t \to \infty}
  \frac{1}{t} \ln
  \|
  \varphi(t,\omega,Z_1)-\varphi(t,\omega,Z_2)\|<0\faa\omega\in\Omega\,.
  \]
\end{theorem-nonB}

We now aim to determine the region of parameters for which $\lambda_{\mathrm {top}}<0$.  In \cite{dsr11}, analytical results are obtained that show that $\lambda_{\mathrm{top}}$ is negative in certain regions of the parameters space, in particular when shear is small. The following theorem extends this result to a larger region in the parameter space.
\begin{theorem-nonC}[Small shear implies synchronisation]
  For each $a, \alpha,\beta,\sigma$, let
  \[
  \kappa:=
  a\sqrt{\frac{\pi K_{a,\alpha,\sigma} \sigma^2}{\alpha+\pi K_{a,\alpha,\sigma} \sigma^2}\left(\frac{\pi K_{a,\alpha,\sigma} \sigma^2}{\alpha+\pi K_{a,\alpha,\sigma} \sigma^2}+2\right)}.
  \]
  Then the top Lyapunov exponent $\lambda_{\mathrm{top}}$ is negative if $|b|\leq \kappa$.
\end{theorem-nonC}
\begin{remark}
(i) Note that $K_{a,0,\sigma}=\frac{2\sqrt{2a}}{\sqrt \pi \sigma}$, and Theorem C then implies that $\lambda_{\mathrm{top}}<0$ provided that $|b|<\sqrt{3}a$ and $\alpha$ is sufficiently small. This special case is considered in \cite[Proposition 4.1]{dsr11}.

\noindent
(ii) For fixed $a$ and $\alpha$, we have
\[
\lim_{\sigma\to\infty}\pi K_{a,\alpha,\sigma}\sigma^2=\lim_{\sigma\to\infty}\frac{2\sqrt{2\pi a}\sigma }{\erfc \left(- \frac{\alpha}{\sqrt{2 a \sigma^2}}   \right)}=\infty\,.
\]
Therefore, by Theorem C we have  $\lambda_{\mathrm{top}}<0$ provided that $|b|<\sqrt{3}a$ and the noise intensity $\sigma$ is sufficiently large.
\end{remark}

Numerical evidence from \cite{dsr11} and Figure~\ref{fig:posLyap} suggest that large shear leads to positive top Lyapunov exponent. Unfortunately, we are not able to prove this analytically and formulate this in the following conjecture. Note that in \cite{elr16}, positivity of the top Lyapunov exponent was analytically established for a two-dimensional system that admits large shear.
\begin{theorem-nonD}[Large shear induces chaos]
  Consider the random dynamical system induced by the stochastic differential equation~\eqref{NormalForm}, and fix $a > 0$ and $\beta \in \mathbb{R}$. Then there exists a function $C: \R\times \R^+ \to \mathbb{R}^+$ such that if
  \[
  	b\geq C(\alpha, \sigma)\,,
  	\]
  then the top Lyapunov exponent $\lambda_{\rm top}$ is positive.
\end{theorem-nonD}
The random attractor $A$ is a random strange attractor in this situation, as illustrated in Figure~\ref{fig:chaosNF}~(e)--(h).

\subsection{Qualitative changes in the finite-time behaviour indicated by the dichotomy spectrum}

The final two main results concern the qualitative changes in the finite-time behaviour. If shear is small, then these changes occur at the deterministic Hopf bifurcation point $\alpha=0$, since the maximal finite-time Lyapunov exponents are equal to $\alpha$. If the shear is increased, then there is a transition to unbounded maximal finite-time Lyapunov exponents.

We also link these phenomena to qualitative changes in the dichotomy spectrum \cite{cdlr16}, which is based on the notion of an exponential dichotomy. We first need the concept of an invariant projector of a linear random dynamical system $(\theta:\R \times \Omega\to \Omega, \Psi:\R \times \Omega\to\R^{d\times d})$, which is given by a measurable function $P: \Omega \to \mathbb{R}^{d \times d}$ with
$$P(\omega) = P(\omega)^2 \ \text{ and } \ P(\theta_t  \omega) \Psi(t, \omega) = \Psi(t, \omega) P(\omega) \fa  t  \in \mathbb{R}  \text{ and } \omega \in \Omega\,.$$

\begin{definition}[Exponential dichotomy]
  Let $(\theta, \Psi)$ be a linear random dynamical system and let $\gamma \in \mathbb{R}$ and $P_{\gamma} : \Omega \to \mathbb{R}^{d \times d}$ be an invariant projector of $(\theta, \Psi)$. Then $(\theta, \Psi)$ is said to admit an \emph{exponential dichotomy} with \emph{growth rate} $\gamma \in \mathbb{R}$, constants $\alpha > 0$, $K \geq 1$ and projector $P_{\gamma}$ if for almost all $\omega \in \Omega$, one has
	\begin{align*}
	\| \Psi(t,\omega)P_{\gamma}(\omega)\| &\leq K e^{(\gamma - \alpha)t} \fa t \geq 0\,, \\
	\| \Psi(t,\omega)(\Id - P_{\gamma}(\omega))\| &\leq K e^{(\gamma + \alpha)t} \fa t \leq 0\,.
	\end{align*}
   We additionally define that $(\theta, \Psi)$ admits an exponential dichotomy with growth rate $\infty$ if there exists a $\gamma \in \mathbb{R}$ such that $(\theta, \Psi)$ admits an exponential dichotomy with growth rate $\gamma$ and projector $P_{\gamma} = \Id$. Analogously, $(\theta, \Psi)$ admits an exponential dichotomy with growth rate $-\infty$ if there exists a $\gamma \in \mathbb{R}$ such that $(\theta, \Psi)$ admits an exponential dichotomy with growth rate $\gamma$ and projector $P_{\gamma} = 0$.
\end{definition}

\begin{definition}[Dichotomy spectrum \cite{cdlr16}]
  Consider the linear random dynamical system $(\theta, \Psi)$. Then the dichotomy spectrum is defined by
  \begin{displaymath}
     \Sigma := \big\{ \gamma \in \mathbb{R}\cup\{-\infty,\infty\} : (\theta, \Psi) \text{ does not admit an exponential dichotomy with growth rate } \gamma\big\}\,.
  \end{displaymath}	
\end{definition}

Under the assumption of small shear, the following result describes a random bifurcation that corresponds to the deterministic Hopf bifurcation. The notions of uniform and finite-time attractivity are given precisely in Section~\ref{bifsmallsh}.

\begin{theorem-nonE}[Bifurcation for small shear]
  \label{smallshearresult}
  Consider the stochastic differential equation~(\ref{NormalForm}) with $\left|b \right| < a$. Then the random attractor $A$ is given by an attracting random equilibrium for all $\alpha \le 0$ and all $\alpha > 0$ in a neighbourhood of $0$. We observe the following bifurcation at $\alpha =0$:
  \begin{enumerate} [(i)]
  \item For $\alpha < 0$, the random equilibrium is globally uniformly attractive, but for $\alpha > 0$, the random equilibrium is not even locally uniformly attractive.
  \item Let $\Phi(t, \omega) := \rmD \varphi (t, \omega, A(\omega))$ denote the linearised random dynamical system along the random equilibrium for fixed $\alpha$. Then the dichotomy spectrum $\Sigma$ of $\Phi$ is given by
  \begin{equation*}
  \Sigma = [- \infty, \alpha] \,,
  \end{equation*}
  i.e.~hyperbolicity is lost at $\alpha = 0$.
  \item For $\alpha < 0$, the random equilibrium is finite-time attractive, whereas for $\alpha > 0$, it is not finite-time attractive.
  \end{enumerate}
\end{theorem-nonE}
The last result of the paper concerns the impact of shear on finite-time Lyapunov exponents. It implies a bifurcation of the spectrum of finite-time Lyapunov exponents for some critical value of shear $b^* \in [a,2a]$.
%

\begin{theorem-nonF}[Shear intensity as bifurcation parameter]
 \label{largeshearresult}
  Let $a,b, \sigma$ satisfy $b>2a>0$ and $\sigma\not=0$. Then for any $z\in\R^2$, the finite-time Lyapunov exponents of solutions starting in $z$ can be arbitrarily large and arbitrarily small with positive probability. More precisely, there exists a $T > 0$ such that for all $t \in (0,T]$, we have
  \[
  \esssup_{\omega\in\Omega}\sup_{\|v \|=1} \frac{1}{t}  \ln\|\rmD \varphi (t, \omega, z)v\|=\infty \quad \mbox{and}\quad \essinf_{\omega\in\Omega} \inf_{\|v \|=1} \frac{1}{t} \ln\|\rmD \varphi (t, \omega, z)v\|=-\infty.
  \]
\end{theorem-nonF}

\section{Generation of the random dynamical system and existence of a random attractor} \label{Generation}

We prove Theorem A in this section by following methods developed in \cite{imkellerlederer02,IS01}. We conjugate the SDE~\eqref{NormalForm} to a random differential equation via a suitable transformation using an Ornstein--Uhlenbeck process, so that we need to prove the existence of the random dynamical system and its random attractor for the corresponding random differential equation. An advantage in working with random differential equations (in comparison to \emph{stochastic} differential equations) is that we can work with sample path estimates of solutions.

For $c>0$, consider the stochastic differential equation
\begin{equation}\label{OU}
  \rmd Z = - c Z \rmd t + \rmd W_t\,,
\end{equation}
where $Z\in\R^2$. Define the random variable $Z^*:=\int_{-\infty}^0 e^{c s} \rmd W_s$. Then $t\mapsto Z^*(\theta_t\omega)$ solves \eqref{OU}, i.e.
\begin{equation}\label{OU_Eq1}
  Z^*(\theta_t\omega)
  =
  Z^*(\omega)
  -
  c\int_0^t Z^*(\theta_s\omega)\,\rmd s + \omega(t)\,.
\end{equation}
By replacing $\Omega$ with a measurable subset $\widehat\Omega\subset \Omega$ of full probability that is invariant under $\theta$, there exist two random variables $K$ and $L$ such that
	\begin{equation}\label{Bound_OU}
	|Z^*(\theta_t\omega)|^2
	\leq
	K(\omega)+ L(\omega)\ln(1+|t|) \fa t\in\mathbb R \mbox{ and } \omega \in\widehat\Omega\,,
	\end{equation}
see \cite{ks98}. We define the map $T:\widehat\Omega\times \mathbb R^2\rightarrow \mathbb R^2$ by $T(\omega,Z):=Z + \sigma Z^*(\omega)$.
Under the change of variable $Z\mapsto T(\omega,Z)$, the SDE~\eqref{NormalForm} is transformed into the random differential equation
\begin{equation}\label{RandomDifferentialEquation}
   \dot Z =g(\theta_t\omega,Z)\,,
\end{equation}
where $g(\omega,Z):=f\left(T(\omega,Z)\right)+c \sigma Z^*(\omega)$. We show later in Lemma~\ref{TechnicalLemma} and the proof of Theorem~A that the solution
$\Psi(t,\omega,Z)$ of this random differential equation,
\[
\Psi(t,\omega,Z)
=Z+ \int_0^t g(\theta_s\omega, \Psi(s,\omega,Z))\,\rmd s\,,
\]
exists for all $t\ge0$ and forms a random dynamical system. The following lemma holds using this fact.

\begin{lemma}\label{TransformSystems}
The following statements hold.
\begin{itemize}
  \item [(i)]
  The random dynamical system $\varphi: \mathbb R_{0}^+\times \widehat\Omega\times \mathbb R^2\rightarrow \mathbb R^2$, defined by
  \begin{equation}\label{Transformation_Evolution}
  \varphi(t,\omega,Z):=  \varphi(t,\omega,Z):= T(\theta_t\omega,\Psi(t,\omega,T(\omega)^{-1}Z))\,,
  \end{equation}
  is generated by the stochastic differential equation \eqref{NormalForm}.
  \item [(ii)]
  If the random dynamical system $\Psi$ has a random attractor, then also the random dynamical system $\varphi$ has a random attractor.
  \end{itemize}
\end{lemma}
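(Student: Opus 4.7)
The plan is to exploit the affine conjugacy $T(\omega,Z)=Z+\sigma Z^*(\omega)$, whose inverse in $Z$ is $T(\omega)^{-1}(Z)=Z-\sigma Z^*(\omega)$, together with the identity \eqref{OU_Eq1} satisfied by $t\mapsto Z^*(\theta_t\omega)$. For part (i), I would set $Y(t):=\Psi(t,\omega,T(\omega)^{-1}Z)$, so that by the definition \eqref{Transformation_Evolution}, $\varphi(t,\omega,Z)=Y(t)+\sigma Z^*(\theta_t\omega)$. Since $Y$ solves the random differential equation $\dot Y(t)=g(\theta_t\omega,Y(t))=f(Y(t)+\sigma Z^*(\theta_t\omega))+c\sigma Z^*(\theta_t\omega)$, integrating on $[0,t]$ and substituting \eqref{OU_Eq1} in the form $\sigma(Z^*(\theta_t\omega)-Z^*(\omega))=-c\sigma\int_0^t Z^*(\theta_s\omega)\,\rmd s+\sigma\omega(t)$ makes the $c\sigma Z^*$-contributions cancel and yields
\[
\varphi(t,\omega,Z)=Z+\int_0^t f(\varphi(s,\omega,Z))\,\rmd s+\sigma\omega(t),
\]
which is precisely the integral form of \eqref{NormalForm}.

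The cocycle property for $\varphi$ then follows from the cocycle property of $\Psi$ together with the intertwining identity $T(\theta_t\omega)^{-1}\circ\varphi(t,\omega,\cdot)=\Psi(t,\omega,\cdot)\circ T(\omega)^{-1}$, which is immediate from \eqref{Transformation_Evolution}. Unfolding,
\[
\varphi(t+s,\omega,Z)=T(\theta_{t+s}\omega,\Psi(s,\theta_t\omega,\Psi(t,\omega,T(\omega)^{-1}Z)))=\varphi(s,\theta_t\omega,\varphi(t,\omega,Z)),
\]
as required. For part (ii), I would propose $A(\omega):=T(\omega,\tilde A(\omega))$ as the random attractor of $\varphi$, where $\tilde A$ is the random attractor of $\Psi$ supplied by the hypothesis. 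Compactness of $A(\omega)$ is preserved because $T(\omega,\cdot)$ is a homeomorphism; measurability of $\omega\mapsto A(\omega)$ follows from joint measurability of $T$; and strict $\varphi$-invariance $\varphi(t,\omega,A(\omega))=A(\theta_t\omega)$ is a one-line consequence of the intertwining identity and of strict $\Psi$-invariance of $\tilde A$.

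The main obstacle will be the pullback attraction step. For a bounded set $B\subset\R^2$ one has $\varphi(t,\theta_{-t}\omega,B)=T(\omega,\Psi(t,\theta_{-t}\omega,B-\sigma Z^*(\theta_{-t}\omega)))$, and since $T(\omega,\cdot)$ is continuous, convergence $\varphi(t,\theta_{-t}\omega,B)\to A(\omega)$ reduces to pullback attraction of the \emph{random, $t$-dependent} family $\{B-\sigma Z^*(\theta_{-t}\omega)\}_{t\ge 0}$ by $\tilde A$ under $\Psi$, rather than of a deterministic bounded set. This is precisely where \eqref{Bound_OU} enters: the bound $|Z^*(\theta_{-t}\omega)|^2\le K(\omega)+L(\omega)\ln(1+t)$ shows that the family grows only logarithmically in $t$ and is therefore tempered. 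Provided the random attractor of \eqref{RandomDifferentialEquation} (still to be constructed in Section~\ref{Generation}) is built to absorb tempered families, via an absorbing ball whose radius depends polynomially on $\|Z^*\|$, the transfer of attraction from $\Psi$ to $\varphi$ is automatic. The conceptual difficulty is thus not deep, but the lemma must be read with tempered rather than merely deterministic-bounded initial families for the reduction to close cleanly.
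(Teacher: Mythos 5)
Your computation for part (i) is exactly the paper's: substitute the affine change of variable, use the integral form of the random differential equation for $\Psi$, and cancel the Ornstein--Uhlenbeck drift via \eqref{OU_Eq1} to recover the integral form of \eqref{NormalForm}. Your treatment of part (ii) is a correct and more explicit version of the paper's one-line argument, which likewise rests on the temperedness of $Z^*(\theta_t\omega)$ and the fact that random attractors here attract the class of compact tempered sets $\mathcal D$.
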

\begin{proof}
(i)	From \eqref{Transformation_Evolution} and the definition of $T$, we have
\[
\varphi(t,\omega,Z)
=
\Psi(t,\omega,Z-\sigma Z^*(\omega))+\sigma Z^*(\theta_t\omega)\,,
\]
which together with the fact that $\Psi$ is a solution of \eqref{RandomDifferentialEquation} implies that
\begin{align*}
\varphi(t,\omega,Z)
&=
Z-\sigma Z^*(\omega)
+
\int_0^t g(\theta_s\omega,\Psi(s,\omega,Z-\sigma Z^*(\omega)))\,\rmd s + \sigma Z^*(\theta_t\omega)\\
&=
Z+\int_0^t g(\theta_s\omega, T(\theta_s\omega)^{-1}(\varphi(s,\omega,Z)))\,\rmd s+ \sigma (Z^*(\theta_t\omega)-Z^*(\omega))\,.
\end{align*}
Thus, using \eqref{OU_Eq1}, we obtain that
\begin{align*}
\varphi(t,\omega,Z)
&=
Z
+
\int_0^t g(\theta_s\omega, T(\theta_s\omega)^{-1}(\varphi(s,\omega,Z)))-\sigma c Z^*(\theta_s\omega)\,\rmd s + \sigma \omega(t)\\
&=
Z+ \int_0^t f(\theta_s\omega, \varphi(s,\omega,Z))\,\rmd s + \sigma \omega(t) \,,
\end{align*}
which completes the proof of this part.

\noindent
(ii) This follows from the definition of a random attractor and the fact that the shifted term in the transformation $T(\omega, Z)$, namely $Z^*(\theta_t\omega)$, is tempered.
\end{proof}

We show that the Euclidean norm of the solutions of \eqref{RandomDifferentialEquation} is bounded by the growth of the corresponding solutions of the scalar equation
\begin{equation}\label{Estimate_04}
\dot \zeta = \gamma_t(\omega)-\sqrt{a} \zeta\,,
\end{equation}
where the stochastic process $(\gamma_t)_{t\in\mathbb R}$ is chosen appropriately.
Note that for each initial value $\zeta_0\in\mathbb R$, the explicit solution of \eqref{Estimate_04} is given by
\begin{equation}\label{Estimate05}
\zeta(t,\omega,\zeta_0)
=
e^{-\sqrt{a}t}\zeta_0+\int_0^t e^{-\sqrt{a}(t-s)} \gamma_s(\omega)\,\rmd s\,.
\end{equation}

\begin{lemma}\label{TechnicalLemma}
  There exists a tempered stochastic processes $(\gamma_t)_{t\in\mathbb R}$, i.e.
  \begin{equation}\label{Temperness}
  \lim_{t \to \pm \infty}\frac{|\gamma_t(\omega)|}{e^{\epsilon |t|}}=0 \fa \epsilon>0 \mbox{ and }\omega\in\widehat\Omega\,,
  \end{equation}
  such that for $Z\in\mathbb R^2$, we have
  \begin{equation}\label{zetaest}
  \|\Psi(t,\omega,Z)\|^2
  \leq 2 \zeta(t,\omega,\|Z\|^2)\,,
  \end{equation}
  which implies that the solution $\Psi(t,\omega,Z)$ exists for all $t\ge0$.
\end{lemma}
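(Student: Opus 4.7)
The plan is to establish a comparison-type differential inequality for $u(t) := \|\Psi(t, \omega, Z)\|^2$ that matches the scalar equation~\eqref{Estimate_04} up to a factor of two. Concretely, I would show
\begin{equation*}
\dot u(t) \leq 2\gamma_t(\omega) - \sqrt{a}\, u(t)
\end{equation*}
for a suitable tempered process $\gamma_t(\omega)$. Once this is in hand, the function $w(t) := 2\zeta(t, \omega, \|Z\|^2)$ solves $\dot w = 2\gamma_t - \sqrt{a}\, w$ with $w(0) = 2\|Z\|^2 \geq u(0)$, so the scalar comparison principle (applicable since the right-hand side is Lipschitz in the state variable) forces $u(t) \leq w(t) = 2\zeta(t, \omega, \|Z\|^2)$ on the maximal interval of existence. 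Global forward existence then follows because the explicit formula~\eqref{Estimate05} shows that $\zeta$ stays finite on every compact time interval.

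The starting point is the identity
\[
\dot u = 2\langle\Psi, g(\theta_t\omega, \Psi)\rangle = 2\langle\tilde\Psi, f(\tilde\Psi)\rangle - 2\sigma\langle\xi, f(\tilde\Psi)\rangle + 2c\sigma\langle\Psi, \xi\rangle,
\]
where $\xi(t) := Z^*(\theta_t\omega)$ and $\tilde\Psi := \Psi + \sigma\xi$. The key structural observation is that $f$ is built from two rotations, so the antisymmetric matrix $J = \begin{pmatrix}0&-1\\1&0\end{pmatrix}$ satisfies $\langle Y, JY\rangle = 0$ for every $Y\in\R^2$, yielding the crucial identity $\langle\tilde\Psi, f(\tilde\Psi)\rangle = \alpha\|\tilde\Psi\|^2 - a\|\tilde\Psi\|^4$, with the shear parameters $\beta$ and $b$ completely eliminated. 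The cross terms $-2\sigma\langle\xi, f(\tilde\Psi)\rangle$ and $2c\sigma\langle\Psi, \xi\rangle$ only involve $\|\xi\|$ multiplied by powers of $\|\tilde\Psi\|$ no higher than three, and they can be absorbed into a controlled fraction of the quartic dissipation $-2a\|\tilde\Psi\|^4$ using Young's inequality with exponents chosen to pair each power of $\|\tilde\Psi\|$ against an appropriate power of $\|\xi\|$ (specifically $\|\xi\|^{4/3}$, $\|\xi\|^2$ and $\|\xi\|^4$). The linear term $2\alpha\|\tilde\Psi\|^2$ is either discarded (if $\alpha\le 0$) or absorbed by the same trick (if $\alpha>0$), at the price of an additive $O(\alpha^2/a)$ constant.

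After these absorptions a strictly negative fraction of $\|\tilde\Psi\|^4$ remains. I would then combine the elementary bound $\|\Psi\|^2 \leq 2\|\tilde\Psi\|^2 + 2\sigma^2\|\xi\|^2$ with the one-variable inequality $-Av^2 + Bv \leq B^2/(4A)$ (completing the square) to trade the remaining $-\kappa\|\tilde\Psi\|^4$ for $-\sqrt{a}\|\Psi\|^2$ plus a further tempered correction proportional to $\|\xi\|^2$. Collecting every term that does not depend on $\Psi$ into $2\gamma_t(\omega)$ produces the desired inequality, with $\gamma_t$ a fixed polynomial in $\|Z^*(\theta_t\omega)\|^2$; temperedness in the sense of~\eqref{Temperness} then follows immediately from~\eqref{Bound_OU}, which grants at most logarithmic growth of $\|Z^*(\theta_t\omega)\|^2$ in $|t|$. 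The main obstacle is essentially bookkeeping: the Young coefficients must be chosen so that the fractions of $-2a\|\tilde\Psi\|^4$ spent on the cross terms, on $2\alpha\|\tilde\Psi\|^2$, and on the extraction of $-\sqrt{a}\|\Psi\|^2$ together leave a strictly positive quartic margin. This is always achievable because extracting $-\sqrt{a}\,v$ from $-\delta v^2$ for any $\delta>0$ costs only an $O(1/\delta)$ additive constant, so one has enough room to balance the budget.
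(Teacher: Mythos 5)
Your proposal is correct and follows the same overall strategy as the paper: compute the time derivative of the squared norm, isolate the negative quartic dissipation coming from the cubic nonlinearity, absorb the noise--dependent cross terms into a controlled fraction of that dissipation via Young's inequality, trade the leftover quartic decay for a linear decay plus an additive tempered constant by completing the square, and then invoke a scalar comparison argument with the affine equation~\eqref{Estimate_04}. The one genuine difference is in the decomposition step: you pair $\tilde\Psi$ with $f(\tilde\Psi)$ and exploit the antisymmetry of the rotation to get the clean identity $\langle\tilde\Psi, f(\tilde\Psi)\rangle = \alpha\|\tilde\Psi\|^2 - a\|\tilde\Psi\|^4$, eliminating $b$ and $\beta$ from the leading term at the outset; the paper instead pairs $\Psi$ directly with $f(\tilde\Psi)$, expands componentwise in $(x_t, y_t, x^*, y^*)$, and carries $b$- and $\beta$-dependent coefficients through estimates~\eqref{Esitmate_01}--\eqref{Estimate_03} into the intermediate comparison ODE (with coefficients $a_t, b_t, c_t$) before absorbing them. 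Your decomposition is structurally cleaner and makes the role of the rotational symmetry explicit, and it also skips the paper's intermediate comparison to the non-Lipschitz equation for $\tilde\zeta$ by arriving directly at the affine differential inequality for $u = \|\Psi\|^2$; both routes reach the same conclusion with the same amount of work.
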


\begin{proof}
  By replacing $Z$ with $(x,y)^\top$ and $Z^*$ with $(x^*,y^*)^\top$, we rewrite \eqref{RandomDifferentialEquation} as
  \begin{align*}
  \left(
  \begin{array}{l}
  \dot x_t\\
  \dot y_t
  \end{array}
  \right)
  &=
  \left(
  \begin{array}{ll}
  \alpha & -\beta\\
  \beta & \alpha
  \end{array}
  \right)
  \left(
  \begin{array}{l}
  x_t+\sigma x^*(\theta_t\omega) \\
  y_t+ \sigma y^*(\theta_t\omega)
  \end{array}
  \right)
  +
  c\sigma
  \left(
  \begin{array}{l}
   x^*(\theta_t\omega) \\
   y^*(\theta_t\omega)
  \end{array}
  \right)
  \\
  &
  -
  \left\|
  \left(
  \begin{array}{l}
  x_t+\sigma x^*(\theta_t\omega) \\
  y_t+ \sigma y^*(\theta_t\omega)
  \end{array}
  \right)
  \right\|^2
  \left(
  \begin{array}{ll}
  a & -b\\
  b & a
  \end{array}
  \right)
  \left(
  \begin{array}{l}
  x_t+\sigma x^*(\theta_t\omega) \\
  y_t+ \sigma y^*(\theta_t\omega)
  \end{array}
  \right)\,.
  \end{align*}
  Let $r_t:=\frac{1}{2}(x_t^2+y_t^2)$. Then a direct computation yields that
  \begin{align*}
  \dot r_t
  &= x_t\dot x_t+ y_t\dot y_t\\
  &=
  2 \alpha r_t + \sigma x^*(\theta_t\omega)((\alpha+c) x_t+\beta y_t)-\sigma y^*(\theta_t\omega) (\beta x_t-(\alpha+c) y_t)\\
  &\quad-\left\|
  \left(
  \begin{array}{l}
  	x_t+\sigma x^*(\theta_t\omega) \\
  	y_t+ \sigma y^*(\theta_t\omega)
  \end{array}
  \right)
  \right\|^2  \left(2 a r_t + \sigma x^*(\theta_t\omega)(a x_t+b y_t)-\sigma y^*(\theta_t\omega) (b x_t-a y_t)\right).
  \end{align*}
  Note that $\max\{(\alpha+c) x_t+\beta y_t, \beta x_t-(\alpha+c)y_t\}\leq \sqrt{((\alpha+c)^2+\beta^2)2r_t}$. Thus,
  \begin{eqnarray}
  &&|x^*(\theta_t\omega)((\alpha+c) x_t+\beta y_t)-y^*(\theta_t\omega) (\beta x_t-(\alpha+c) y_t)|\notag\\
  &&\leq
  \sqrt{((\alpha+c)^2+\beta^2)2r_t} \,(|x^*(\theta_t\omega)|+|y^*(\theta_t\omega)|)
  \notag
  \\
  &&\leq
  2\sqrt{((\alpha+c)^2+\beta^2)r_t}\, \|Z^*(\theta_t\omega)\| \,.\label{Esitmate_01}
  \end{eqnarray}
  On the other hand, we have
  \begin{displaymath}
  \left\|
  \left(
  \begin{array}{l}
  x_t+\sigma x^*(\theta_t\omega) \\
  y_t+ \sigma y^*(\theta_t\omega)
  \end{array}
  \right)
  \right\|^2
  =
  2 r_t+\sigma^2 \|Z^*(\theta_t\omega)\|^2 + 2\sigma x^*(\theta_t\omega) x_t+ 2\sigma y^*(\theta_t\omega)y_t\,,
  \end{displaymath}
  which together with the fact that
  $|x^*(\theta_t\omega)x_t+ y^*(\theta_t\omega)y_t|\leq \|Z^*(\theta_t\omega)\| \sqrt{2r_t}$ implies that
  \[
  \left|
  \left\|
  \left(
  \begin{array}{l}
  x_t+\sigma x^*(\theta_t\omega) \\
  y_t+ \sigma y^*(\theta_t\omega)
  \end{array}
  \right)
  \right\|^2
  -
  2r_t-\sigma^2\|Z^*(\theta_t\omega)\|^2
  \right|
  \leq
  2\sigma \|Z^*(\theta_t\omega)\|\sqrt{2r_t}\,.
  \]
  Consequently,
  \begin{equation}\label{Estimate_02}
  a r_t \left\|
  \left(
  \begin{array}{l}
  	x_t+\sigma x^*(\theta_t\omega) \\
  	y_t+ \sigma y^*(\theta_t\omega)
  \end{array}
  \right)
  \right\|^2
  \geq
  2ar_t^2 -2^{\frac{3}{2}}a\sigma \|Z^*(\theta_t\omega)\| r_t^{\frac{3}{2}} + a \sigma^2 	\|Z^*(\theta_t\omega)\|^2 r_t\,,
  \end{equation}
  and from the fact that
  \[
  |x^*(\theta_t\omega)(a x_t+b y_t)- y^*(\theta_t\omega) (b x_t-a y_t)|
  \leq
  \sqrt{a^2+b^2}\|Z^*(\theta_t\omega)\|\sqrt{2r_t}\,,
  \]
  we derive that
  \begin{equation}\label{Estimate_03}
  \begin{array}{l}
  \big|\sigma x^*(\theta_t\omega)(a x_t+b y_t)-\sigma y^*(\theta_t\omega) (b x_t-a y_t)\big|
  \left\|
  \left(
  \begin{array}{l}
  x_t+\sigma x^*(\theta_t\omega) \\
  y_t+ \sigma y^*(\theta_t\omega)
  \end{array}
  \right)
  \right\|^2\\
  \leq 2^{\frac{3}{2}}\sigma \sqrt{a^2+b^2} \|Z^*(\theta_t\omega)\| r_t^{\frac{3}{2}}+ \sigma^2 \sqrt{2(a^2+b^2)} \|Z^*(\theta_t\omega)\|^2 r_t \\
  + \sigma^3\sqrt{2(a^2+b^2)} \|Z^*(\theta_t\omega)\|^3 \sqrt{r_t}\,.
  \end{array}
  \end{equation}
  Using \eqref{Esitmate_01}, \eqref{Estimate_02}, \eqref{Estimate_03} and a comparison argument, we obtain  for all $t\ge0$  and $Z\in\mathbb R^2\setminus\{0\}$ that $\frac{1}{2}\|\Psi(t,\omega,Z)\|^2\leq \tilde\zeta(t,\omega,\|Z\|^2)$ , where $t\mapsto\tilde\zeta(t,\omega,\|Z\|^2)=\tilde\zeta_t$ is the solution of the following scalar differential equation
  \[
  \dot{\tilde{\zeta}}_t
  =
  a_t(\omega) \tilde\zeta_t^{\frac{1}{2}}
  +b_t(\omega) \tilde\zeta_t + c_t(\omega)\tilde\zeta_t^{\frac{3}{2}}-4 a \tilde\zeta_t^2\,,
  \]
  with initial condition $\tilde\zeta_0=\|Z\|^2$. Here the functions $a_t,b_t,c_t$ are defined by
  \begin{align*}
  a_t(\omega)
  &:=
  2 \sigma \sqrt{(\alpha+c)^2+\beta^2}\|Z^*(\theta_t\omega)\|+ \sqrt{2} \sigma^3 \sqrt{a^2+b^2}\|Z^*(\theta_t\omega)\|^2\,,\\
  b_t(\omega)
  &:=
  2 \alpha+ 4\sigma^2\sqrt{a^2+b^2} \|Z^*(\theta_t\omega)\|^2 - 2 a\sigma^2 \|Z^*(\theta_t\omega)\|^2\,,\\
  c_t(\omega)
  &:=
  \left(2^{3/2}\sqrt{a^2+b^2}\sigma+ 2^{5/2} a \sigma \right) \|Z^*(\theta_t\omega)\|\,.
  \end{align*}
  From temperdness of $Z^*(\theta_t\omega)$, all stochastic processes $(a_t)_{t\in\mathbb R}, (b_t)_{t\in\mathbb R}$ and $(c_t)_{t\in\mathbb R}$	are also tempered. Note that
  \begin{align*}
  	a \tilde\zeta_t^2+ \sqrt[3]{\frac{a_t(\omega)^4}{4^4 a}}
  	&\geq |a_t(\omega)|\tilde\zeta_t^{\frac{1}{2}}\,,\\
  	a \tilde\zeta_t^2+ \frac{b_t(\omega)^2}{2a}
  		&\geq |b_t(\omega)\tilde\zeta_t|\,,\\
  	a \tilde\zeta_t^2+ 	\frac{3^3 c_t(\omega)^4}{4^4 a^3}
  	&\geq |c_t(\omega)\tilde\zeta_t^{\frac{3}{2}}|\,.
  \end{align*}
  Therefore,
  \begin{displaymath}
  a_t(\omega) \tilde\zeta_t^{\frac{1}{2}}
  +b_t(\omega) \tilde\zeta_t + c_t(\omega)\tilde\zeta_t^{\frac{3}{2}}-4 a \tilde\zeta_t^2
  \le
  \sqrt[3]{\frac{a_t(\omega)^4}{4^4 a}}
  +
  \frac{b_t(\omega)^2}{2a}
  +
  \frac{3^3 c_t(\omega)^4}{4^4 a^3}-a \tilde\zeta_t^2\\
  \leq
  \gamma_t(\omega)-\sqrt{a}\tilde\zeta_t\,,
  \end{displaymath}
  where
  \[
  \gamma_t(\omega):=\frac{1}{4}+\sqrt[3]{\frac{a_t(\omega)^4}{4^4 a}}+\frac{b_t(\omega)^2}{2a} +\frac{3^3 c_t(\omega)^4}{4^4 a^3}
  \]
  is tempered. Hence, using a comparison argument, the solution $\zeta$ of \eqref{Estimate_04} satisfies \eqref{zetaest}, which finishes the proof of this lemma.
\end{proof}

\begin{proof}[Proof of Theorem A]
  (i) According to \cite{a98}, there exists a local random dynamical system generated by solutions of \eqref{RandomDifferentialEquation}. Due to Lemma~\ref{TechnicalLemma}, the solution $\Psi(t,\omega,Z)$ exists for all $t\ge0$. Hence, this proves the fact that we assumed to prove Lemma~\ref{TransformSystems}. Lemma~\ref{TransformSystems}~(i) completes the proof of (i).

  \noindent
  (ii)  Let $D\in \mathcal F\otimes \mathcal B(\R^d)$  be tempered. Then there exists a tempered random variable $R:\widehat\Omega\rightarrow \mathbb R^+$ such that $D(\omega)\subset B_{R(\omega)}(0)$. By Lemma~\ref{TechnicalLemma}, for all $Z\in D(\theta_{-t}\omega)$, we have
  \begin{displaymath}
  \|\Psi(t,\theta_{-t}\omega,Z)\|^2
   \leq 2 \zeta(t,\theta_{-t}\omega, R(\theta_{-t}\omega))\\
  \leq
  2 e^{-\sqrt{a}t} R(\theta_{-t}\omega) + 2 \int_{-t}^0 e^{\sqrt{a}s} \gamma_s(\omega)\,\rmd s\,,
  \end{displaymath}
  where we use \eqref{Estimate05} to obtain the last inequality. Since $(\gamma_t)_{t\in\mathbb R}$ is tempered,
  $\int_{-\infty}^0 e^{\sqrt as} \gamma_s(\omega)\,\rmd s$ exists. On the other hand, since $R$ is tempered, it follows that $\lim_{t \to \infty} e^{-\sqrt a t} R(\theta_{-t}\omega)=0$. Define $r(\omega):=\sqrt{1+2 \int_{-\infty}^0 e^{\sqrt{a}s} \gamma_s(\omega)\,\rmd s}$. Thus, for each $\omega\in\widehat{\Omega}$, there exists $T>0$ such that
  \[
  \Psi(t,\theta_{-t}\omega,D(\theta_{-t}\omega))
  \subset
  B_{r(\omega)}(0) \fa  t\geq T\,.
  \]
  This means that $B_{r(\omega)}(0)$ is an absorbing set. Applying Theorem~\ref{ExistencePullback} completes the proof.
\end{proof}

\section{Synchronisation}\label{Hopf}

We prove in this section that the system~\eqref{NormalForm} admits synchronisation if the top Lyapunov exponent is negative (Theorem~B), and we show that small implies negativity of the top Lyapunov exponent and thus synchronisation (Theorem~C). In addition, we show that the system satisfies the integrability condition of the Multiplicative Ergodic Theorem, and we prove that the sum of the two Lyapunov exponents is always negative.

\subsection{Negativity of the sum of the Lyapunov exponents}
Recall that $\Phi:\R_0^+\times \Omega\times \R^2\to \R^{2\times 2}$ is the linear random dynamical system satisfying $\Phi(0,\omega,Z)=\id$ and
\[
\dot \Phi(t,\omega,Z)=
\rmD f(\varphi(t,\omega,Z)))\Phi(t,\omega,Z)\,.
\]
We show that $\Phi$ satisfies the integrability condition of the Multiplicative Ergodic Theorem with respect to the measure $\mu$ and also show that the sum of the Lyapunov exponents of $\Phi$ is always negative.
\begin{proposition}\label{Singularity}
  The following statements hold.
  \begin{itemize}
    \item [(i)]
    Let $\lambda^+:\mathbb R^2\rightarrow \mathbb R$ be defined by
    \begin{equation}\label{UpperFunction}
      \lambda^+(Z):=\max_{\|r\|=1} \langle Df(Z)r,r\rangle\,.
    \end{equation}
    Then for almost all $\omega\in \Omega$ and all $Z\in \R^2$, we have
    \begin{equation}\label{UpperEstimate}
    \|\Phi(t,\omega,Z)\|
    \leq
    \exp\left(\int_0^t \lambda^+(\varphi(s,\omega,Z))\,\rmd s\right) \fa t\geq 0\,,
    \end{equation}
    and the linear random dynamical system $\Phi$ satisfies the integrability condition of the Multiplicative Ergodic Theorem.
    \item [(ii)] Let $\lambda_{\Sigma}$ be the sum of the two Lyapunov exponents of the linear random dynamical system $\Phi$.
    Then $\lambda_{\Sigma}<0$ and the disintegrations of the Markov measure $\mu$ are singular with respect to the Lebesgue measure on $\mathbb R^2$.
  \end{itemize}
\end{proposition}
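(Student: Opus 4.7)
For (i), the upper bound \eqref{UpperEstimate} is a Gronwall-type estimate. Fix a unit vector $v\in\mathbb R^2$ and set $r(t):=\Phi(t,\omega,Z)v$. From \eqref{Linearization} I get $\tfrac{\rmd}{\rmd t}\|r(t)\|^2=2\langle \rmD f(\varphi(t,\omega,Z))r(t),r(t)\rangle\leq 2\lambda^+(\varphi(t,\omega,Z))\|r(t)\|^2$, so integrating and taking the supremum over $\|v\|=1$ yields \eqref{UpperEstimate}. For the integrability condition, \eqref{UpperEstimate} reduces matters to bounding $\int_0^1 (\lambda^+(\varphi(s,\omega,Z)))^+\,\rmd s$ in $L^1(\mu)$. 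Fubini together with the $\Theta_s$-invariance of $\mu$ collapses this to $\int_{\mathbb R^2}(\lambda^+(Z))^+\,\rmd\rho(Z)$, which is finite because a direct computation of the eigenvalues of the symmetric part of $\rmD f(Z)$ gives $\lambda^+(Z)=\alpha+\|Z\|^2(\sqrt{a^2+b^2}-2a)$ and the density $p$ has Gaussian--quartic tails. The matching $L^1$-bound for $\Phi(t,\omega,Z)^{-1}$ follows either from the same argument applied to the smallest eigenvalue $\lambda^-(Z)=\alpha-\|Z\|^2(\sqrt{a^2+b^2}+2a)$ of the symmetric part of $\rmD f(Z)$, or by combining \eqref{UpperEstimate} with Liouville's formula below.

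For (ii), the plan is to evaluate $\lambda_\Sigma$ via Liouville's formula. Since $\mathrm{tr}(\rmD f(Z))=2\alpha-4a\|Z\|^2$, one has $\ln|\det\Phi(t,\omega,Z)|=\int_0^t(2\alpha-4a\|\varphi(s,\omega,Z)\|^2)\,\rmd s$, and Birkhoff applied to the ergodic system $(\Theta_t,\mu)$ yields
\[
\lambda_\Sigma=2\alpha-4a m_2,\qquad m_2:=\int_{\mathbb R^2}\|Z\|^2\,\rmd\rho.
\]
Passing to the radial variable $u=\|Z\|^2$ and using the antiderivative identity $\frac{\rmd}{\rmd u}e^{(2\alpha u-au^2)/(2\sigma^2)}=\sigma^{-2}(\alpha-au)e^{(2\alpha u-au^2)/(2\sigma^2)}$ against the explicit density gives the closed-form relation $a m_2-\alpha=\pi K_{a,\alpha,\sigma}\sigma^2$, hence $\lambda_\Sigma=-2\alpha-4\pi K_{a,\alpha,\sigma}\sigma^2$. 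For $\alpha\geq 0$ this is manifestly negative; for $\alpha<0$ the Mills inequality $\mathrm{erfc}(x)\leq e^{-x^2}/(x\sqrt\pi)$ applied at $x=-\alpha/\sqrt{2a\sigma^2}$ forces $\pi K_{a,\alpha,\sigma}\sigma^2>|\alpha|$, so $\lambda_\Sigma<-2|\alpha|<0$.

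The singularity of the disintegrations $\mu_\omega$ then follows from the standard random-dynamical-systems principle (cf.\ \cite{a98}) that an invariant measure whose sample measures are absolutely continuous with respect to Lebesgue must satisfy $\lambda_\Sigma\geq 0$: uniform volume contraction along orbits is incompatible with preserving an $L^1$-density. Since $\lambda_\Sigma<0$, this forces $\mu_\omega\perp\mathrm{Leb}$ almost surely. The main obstacle I anticipate is the evaluation of $m_2$ itself: testing the stationary Fokker--Planck equation against $g(Z)=\|Z\|^2$ only yields the single moment relation $am_4=\sigma^2+\alpha m_2$ and therefore has to be supplemented by the integration-by-parts identity against the explicit density; once $m_2$ is in hand, both the sign analysis of $\lambda_\Sigma$ and the singularity step are short.
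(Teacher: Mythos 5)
Your proof is correct and follows essentially the same route as the paper: a Gronwall-type estimate for \eqref{UpperEstimate} together with Fubini and $\Theta$-invariance to reduce the integrability condition to finiteness of $\int|\lambda^+|\,\rmd\rho$, then Liouville's formula and Birkhoff's theorem for $\lambda_\Sigma$, and finally the LeJan--Baxendale singularity dichotomy. The only cosmetic differences are your compact closed form $\lambda_\Sigma=-2\alpha-4\pi K_{a,\alpha,\sigma}\sigma^2$ combined with the Mills-ratio bound for $\alpha<0$ (the paper instead reaches an equivalent expression through a sequence of changes of variable and reads off the sign directly), and the singularity step should properly cite LeJan \cite{lj87} and Baxendale \cite{b91} rather than the general reference \cite{a98}.
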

\begin{proof}
  (i) Let $v \in \mathbb{R}^2\setminus\{0\}$ be arbitrary. By definition of $\Phi$, we have
  \begin{align*}
  	\frac{\rmd}{\rmd t} \| \Phi(t, \omega, Z)v \|^2 &= 2 \big\langle \rmD f( \varphi(t,\omega,Z)) \Phi(t, \omega, Z) v, \Phi(t, \omega, Z)v \big\rangle \\
  	& = 2 \left\langle \rmD f( \varphi(t,\omega,Z))\frac{ \Phi(t, \omega, Z) v}{\| \Phi(t, \omega, Z) v\|} , \frac{ \Phi(t, \omega, Z) v}{\| \Phi(t, \omega, Z) v\|} \right\rangle \| \Phi(t, \omega, Z)v \|^2 \\[1.5ex]
  	& \leq 2 \lambda^+( \varphi(t,\omega,Z)) \| \Phi(t, \omega, Z)v \|^2.
  \end{align*}
  This implies that
  \begin{equation} \label{NotDodgy}
  	\| \Phi(t, \omega, Z)v \|^2 \leq \| v \|^2 \exp \left(2 \int_0^t \lambda^+( \varphi(s,\omega,Z)) \rmd s \right)\, .
  \end{equation}
  Since $v$ is arbitrary, \eqref{UpperEstimate} is proved. Using \eqref{UpperEstimate}, we obtain that
  \[
  \sup_{0 \leq t \leq 1} \ln^+ \| \Phi(t, \omega,Z) \|
  \leq \int_0^1 |\lambda^+( \varphi(s,\omega,Z))|\, \rmd s\,,
  \]
  which implies that
  \begin{align}
  \int_{\Omega \times \mathbb{R}^2} \sup_{0 \leq t \leq 1} \ln^+ \| \Phi(t, \omega,Z) \| \ \rmd \mu(\omega,Z)
  &\leq
  \int_{\Omega \times \mathbb{R}^2}\int_0^1|\lambda^+( \varphi(s,\omega,Z))|\; \rmd s\,\rmd  \mu(\omega,Z)\notag\nonumber\\
  &=
  \int_0^1\int_{\Omega \times \mathbb{R}^2}|\lambda^+( \varphi(s,\omega,Z))|\; \rmd  \mu(\omega,Z) \,\rmd s\notag\nonumber\\
  &=
  \int_{\mathbb R^2} |\lambda^{+}(Z)|\, \rmd  \rho(Z)\,,\label{UpperEstimate2}
  \end{align}
  where in the last equality, we use the fact that the skew product $\Theta_s(\omega,Z)=(\theta_s\omega,\varphi(s,\omega,Z))$ preserves the probability measure $\mu$. By definition of $\lambda^+$ and the explicit form of $\rmD f$ given by
  \[
  \rmD f(Z)
  =
  \left(
  \begin{array}{ll}
  	\alpha & -\beta\\
  	\beta & \alpha
  \end{array}
  \right)
  -
  \left(
  \begin{array}{ll}
  	3ax^2 +2bxy+ay^2 & b x^2+2a xy+3by^2\\
  	-3bx^2-2axy-by^2 & ax^2-2bxy+3ay^2
  \end{array}
  \right)\,,
  \]
  it follows that
  \[
  |\lambda^+(Z)|
  \leq
  |\alpha|+6(|a|+|b|)(x^2+y^2) \fa  Z=(x,y)^{\top}\in \mathbb R^2\,.
  \]
  Together with \eqref{UpperEstimate2}, this implies that
  \[
  \int_{\Omega \times \mathbb{R}^2} \sup_{0 \leq t \leq 1} \ln^+ \| \Phi(t, \omega,Z) \| \, \rmd \mu(\omega,Z)
  \leq
  |\alpha|+ 6(|a|+|b|)\int_{\mathbb R^2} (x^2+y^2) p(x,y)\,\rmd  x\,\rmd y\,,
  \]
  where $p(x,y)$ is given as in \eqref{StatDens}. Thus, the linear random dynamical system $\Phi$ satisfies the integrability condition of the Multiplicative Ergodic Theorem.

  (ii) Due to $\lambda_{\Sigma} =  \lim_{t \to \infty} \frac{1}{t} \ln \det \Phi(t,\omega,Z)$, the sum of the two Lyapunov exponents of the linear random dynamical system generated by \eqref{Linearization} reads as
  	\[
  	\lambda_{\Sigma}
  	=
  	2\alpha
  	-4a\int_{\R^2}(x^2+y^2) p(x,y)\,\rmd x \,\rmd y\,.
  	\]
  	Using the explicit formula for $p(x,y)$ from \eqref{StatDens}, we obtain that
  	\[
  	\lambda_{\Sigma}
  	=
  	2\alpha
  	-4a
  	\frac{\int_{\R^2} (x^2+y^2)\exp\left(\frac{2\alpha (x^2+y^2)- a(x^2+y^2)^2}{2\sigma^2}\right)\, \rmd x\, \rmd y}{\int_{\R^2}\exp\left(\frac{2\alpha (x^2+y^2)- a(x^2+y^2)^2}{2\sigma^2}\right)\, \rmd x\,\rmd y}\,.
  	\]
  	Applying the change of variables $x= \sigma r \sin\phi$, $y=\sigma r\cos\phi$ the previous integral yields that
  	\[
  	\lambda_{\Sigma}
  	=
  	2\alpha
  	-4a\sigma^2
  	\frac{\int_0^{\infty }r^3 \exp\left(\frac{2\alpha r^2 -a\sigma^2 r^4}{2}\right)\,\rmd r}{\int_0^{\infty }r \exp\left(\frac{2\alpha r^2 -a\sigma^2 r^4}{2}\right)\,\rmd r}\,.
  	\]
  	A further change of variable $r^2\mapsto r$ gives that
  	\begin{displaymath}
  		\lambda_{\Sigma}
  		=
  		2\alpha
  		-4a\sigma^2
  		\frac{\int_0^{\infty }r \exp\left(\frac{2\alpha r -a\sigma^2 r^2}{2}\right)\,\rmd r}{\int_0^{\infty } \exp\left(\frac{2\alpha r -a\sigma^2 r^2}{2}\right)\,\rmd r}\,,
  	\end{displaymath}
  which proves that $\lambda_{\Sigma}<0$ if $\alpha\leq 0$. We also show this for $\alpha>0$ now. Using the change of variable $\sqrt a |\sigma| r -\frac{\alpha}{\sqrt a |\sigma|} \mapsto r$, we obtain that
  \begin{align*}	
  \lambda_{\Sigma}		&=
  		-2\alpha
  		-4\sqrt{a}|\sigma|
  		\frac{\int_{-\frac{\alpha}{|\sigma|\sqrt a}}^{\infty} r \exp\left(-\frac{r^2}{2}\right)\,\rmd r }{\int_{-\frac{\alpha}{|\sigma|\sqrt a}}^{\infty} \exp\left(-\frac{r^2}{2}\right)\,\rmd r}\\
  		&=
  		-2\alpha - 4\sqrt{a}|\sigma| \frac{\exp\left( -\frac{\alpha^2}{2a\sigma^2}\right)}{\int_{-\frac{\alpha}{|\sigma|\sqrt a}}^{\infty} \exp\left(-\frac{r^2}{2}\right)\,\rmd r}\,,
  	\end{align*}
  which shows that $\lambda_{\Sigma}<0$ for $\alpha >0$. As a consequence, using \cite[Proposition~1]{lj87} and \cite[Theorem~4.15]{b91}, the disintegration of the Markov measure $\mu$ is singular with respect to $\rho$ if $\lambda_{\Sigma}<0$. The fact that $\rho$ is equivalent to the Lebesgue measure finishes the proof of this proposition.
\end{proof}	

\subsection{Negative top Lyapunov exponent implies synchronisation}

The aim of this subsection is to prove synchronisation of the random dynamical system generated by \eqref{NormalForm} when its top Lyapunov exponent is negative. Our proof consists of two ingredients. The first ingredient is a result from \cite{fgs16} that implies that the fibers of the random attractor are singletons. The second ingredient is the stable manifolds theorem, which we use to verify that this random attractor is also attractive in forward time.

We make use of the following sufficient conditions for the collapse of a random attractor \cite[Theorem~2.14]{fgs16}.

\begin{theorem}[Collapse of the random attractor] \label{SynchroTheo}
  We assume that a random dynamical system $(\theta, \varphi)$  is
  \begin{enumerate}
  \item[(i)] \emph{asymptotically stable} on a fixed non-empty open set $U\subset \R^2$, in the sense that there exists a sequence $t_n \to \infty$ such that
  \begin{equation*}
  \mathbb{P}\left( \omega\in\Omega: \lim_{n \to \infty} \diam(\varphi(t_n,\omega,U)) =0 \right)> 0\,.
  \end{equation*}
  \item[(ii)] \emph{swift transitive}, i.e.~for all $x,y\in\R^2$ and $r>0$, there exists a $t>0$ such that
  \begin{equation*}
  \mathbb{P}\big( \omega\in\Omega: \varphi(t,\omega,B_r(x)) \subset B_{2r}(y) \big) > 0\,.
  \end{equation*}
  \item[(iii)] \emph{contracting on large sets}, i.e.~for all $R >0$, there exist $y\in\R^2$ and $t>0$ such that
  \begin{equation*}
  \mathbb{P} \left( \omega\in\Omega:\diam(\varphi(t,\omega,B_R(y))) \leq \tfrac{R}{4} \right) > 0\,.
  \end{equation*}
  \end{enumerate}
  Suppose further that $(\theta,\varphi)$ has a random attractor $A$ with $\mathcal{F}_{-\infty}^0$-measurable fibers. Then $A(\omega)$ is a singleton $\mathbb{P}$-almost surely.
\end{theorem}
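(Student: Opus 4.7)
My plan is a contradiction argument: suppose the attractor has positive diameter with positive probability, then concatenate the three hypotheses on disjoint future time intervals to shrink the diameter below any prescribed threshold with positive probability. Independence of Wiener increments on disjoint intervals makes the chain go through, and past-measurability of $A$ decouples its starting configuration from the future shrinking events.

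Suppose for contradiction $\mathbb P(d > 0) > 0$, where $d(\omega) := \diam A(\omega)$. Since each $A(\omega)$ is compact, by monotone convergence there exist $\delta > 0$ and $R_0 > 0$ such that the $\mathcal F_{-\infty}^0$-measurable event $\Omega_0 := \{\omega : d(\omega) > \delta \text{ and } A(\omega) \subset B_{R_0}(0)\}$ has positive probability. Apply hypothesis (iii) with $R := 2R_0$ to obtain $y^* \in \R^2$ and $t^* > 0$ with $\mathbb P(\diam \varphi(t^*, \cdot, B_{2R_0}(y^*)) \leq R_0/2) > 0$. I then construct an event $\Omega^* \in \mathcal F_{0,T}$ of positive probability on which $\diam \varphi(T, \omega, B_{R_0}(0)) < \delta/2$ by four disjoint stages on $[0,T]$: on $[0, s_0]$, apply (ii) (covering $B_{R_0}(0)$ by finitely many small balls and using continuity of $\varphi$) to place $\varphi(s_0, \omega, B_{R_0}(0)) \subset B_{2R_0}(y^*)$; on $[s_0, s_0 + t^*]$, use (iii) via the cocycle property to contract to diameter $\leq R_0/2$; on $[s_0 + t^*, s_0 + t^* + s_1]$, use (ii) again to transport the resulting small ball into the open set $U$ from (i); and on $[s_0 + t^* + s_1, T]$, apply (i) for a sufficiently large $t_n$ from that hypothesis to drive the diameter below $\delta/2$. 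Each event lies in $\mathcal F$ on its own disjoint subinterval, so they are independent under $\mathbb P$ and $\Omega^*$ has positive probability.

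Since $\Omega_0 \in \mathcal F_{-\infty}^0$ and $\Omega^* \in \mathcal F_{0,T}$ are independent by the independent-increments property of Wiener measure, $\mathbb P(\Omega_0 \cap \Omega^*) > 0$. On this intersection, strict invariance gives $A(\theta_T \omega) = \varphi(T, \omega, A(\omega)) \subset \varphi(T, \omega, B_{R_0}(0))$ with diameter less than $\delta/2$. By $\theta_T$-invariance of $\mathbb P$, we deduce $\mathbb P(d < \delta/2) > 0$.

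The main obstacle is that this alone does not yet contradict $\mathbb P(d > \delta) > 0$: one still needs a 0-1 law for $d$ to promote "$d$ small with positive probability" to "$d$ small almost surely". The route I would take uses the $\mathcal F_{-\infty}^0$-measurability of $A$ together with ergodicity of the Wiener shift and the pullback representation $A(\omega) = \lim_{t \to \infty} \varphi(t, \theta_{-t}\omega, K)$ for an absorbing deterministic set $K$: iterating the chaining above with $\delta$ replaced by $\delta/2, \delta/4, \ldots$ on progressively later disjoint intervals and combining with a Borel--Cantelli-type argument over the resulting independent events forces $d = 0$ almost surely. A secondary but still delicate technical point is the upgrade of hypothesis (ii) from single balls to the whole of $B_{R_0}(0)$, which requires a finite cover of $B_{R_0}(0)$ by small balls together with a joint-realisation argument placing all their images into the common target $B_{2R_0}(y^*)$ within a single positive-probability event.
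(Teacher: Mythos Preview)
The paper does not prove this theorem: it is quoted verbatim as \cite[Theorem~2.14]{fgs16} and used as a black box in the proof of Proposition~\ref{Pullback_Attractor}. So there is no ``paper's own proof'' to compare against; you are attempting to reconstruct the Flandoli--Gess--Scheutzow argument.

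Your chaining of hypotheses (ii)$\to$(iii)$\to$(ii)$\to$(i) on disjoint time intervals, combined with the independence of $\mathcal F_{-\infty}^0$ (where $A$ lives) and $\mathcal F_{0,T}$ (where the shrinking event lives), is indeed the core mechanism. Two technical remarks on the chain itself. First, your worry about covering $B_{R_0}(0)$ by small balls in stage~1 is unnecessary: hypothesis~(ii) applied directly with $r=R_0$, $x=0$, $y=y^*$ already gives $\varphi(s_0,\omega,B_{R_0}(0))\subset B_{2R_0}(y^*)$ on a positive-probability event. Second, there is a gap you did not flag: after stage~2 the contracted image has diameter $\le R_0/2$ but its \emph{location} depends on $\omega$, so you cannot feed it directly into hypothesis~(ii) for stage~3, which requires a deterministic starting ball. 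One has to first restrict to a further positive-probability sub-event on which the image lies in a fixed small ball.

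The genuine obstacle, which you correctly isolate, is the closure: from ``$\mathbb P(d<\eta)>0$ for every $\eta>0$'' one cannot conclude ``$\mathbb P(d=0)>0$'', let alone ``$d=0$ a.s.''. Your proposed Borel--Cantelli iteration over disjoint intervals is not convincing as written, because each successive shrinking step must start from the (random) output of the previous one, so the events are not independent in the way Borel--Cantelli would require, and the conditional probabilities need not be bounded away from zero. The argument in \cite{fgs16} handles this more carefully; one clean route is to observe that the event $\{d=0\}$ is forward $\theta$-invariant (if $A(\omega)$ is a point then so is $A(\theta_t\omega)=\varphi(t,\omega,A(\omega))$), hence by measure-preservation it is fully $\theta$-invariant up to null sets, and therefore has probability $0$ or $1$ by ergodicity --- but one still has to produce $\mathbb P(d=0)>0$ rather than merely $\essinf d=0$, and that step requires a sharper construction than your sketch provides. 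I would recommend consulting \cite{fgs16} directly for the complete argument.
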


We use this result for the following proposition.

\begin{proposition}\label{Pullback_Attractor}
  Suppose that the top Lyapunov exponent $\lambda_{\rm top}$ of the random dynamical system generated by \eqref{NormalForm} is negative. Then the fibers of the random attractor are singletons, given by $\mathcal{F}_{-\infty}^{0}$-measurable map $A:\Omega\rightarrow \mathbb R^2$. Furthermore, the following statements hold:
  \begin{itemize}
  	\item [(i)] $A$ is a random equilibrium of $\varphi$, i.e.
  	\[
  	\varphi(t,\omega,A(\omega))=A(\theta_t\omega)\fa t\geq 0 \mbox{ and almost all } \omega\in\Omega\,.
  	\]
  	\item [(ii)] The random equilibrium is distributed according to the stationary density $(x,y)\mapsto p(x,y)$, see~\eqref{StatDens}. More precisely,
  	\[
  	\mathbb P\big(\{\omega \in \Omega: A(\omega)\in C\}\big)
  	=\int_{C} p(x,y) \, \rmd x \,\rmd y \fa  C \in\mathcal B(\mathbb R^2)\,.
  	\]
  	\item [(iii)] The top Lyapunov exponent of the linearization along the random equilibrium $a$,
  	\[
  	\dot \xi = \rmD f(A(\theta_t\omega))\xi\,,
  	\]
  is equal to $\lambda_{\rm top}$.
  \end{itemize}
\end{proposition}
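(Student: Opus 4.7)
The plan is to apply Theorem \ref{SynchroTheo} to obtain that $A(\omega)$ is a singleton, and then deduce (i)--(iii) as direct consequences of invariance and the structure of the Markov measure $\mu$. The $\mathcal{F}_{-\infty}^{0}$-measurability of $\omega \mapsto A(\omega)$ is already provided by Theorem A(ii), so after the collapse it descends to the single point.

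First I would verify the three hypotheses of Theorem \ref{SynchroTheo}. For \emph{asymptotic stability on an open set}: the assumption $\lambda_{\mathrm{top}} < 0$ combined with Proposition \ref{Singularity}(ii), which gives $\lambda_{\Sigma} < 0$, forces both Lyapunov exponents of $\Phi$ to be strictly negative. Picking a $\mu$-typical point $(\omega_0, Z_0)$ and invoking the random stable manifold theorem yields a deterministic radius $\rho > 0$, constants $C, \eta > 0$, and a set $\Omega_0 \subset \Omega$ of positive probability on which $\|\varphi(t,\omega,Z) - \varphi(t,\omega,Z_0)\| \leq C e^{-\eta t} \|Z - Z_0\|$ for every $Z \in B_\rho(Z_0)$, so $\diam(\varphi(t_n,\omega,B_\rho(Z_0))) \to 0$ along any sequence $t_n \to \infty$ for $\omega \in \Omega_0$. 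For \emph{swift transitivity}: the diffusion matrix $\sigma\,\Id$ is non-degenerate, so the Stroock--Varadhan support theorem ensures the law of $\omega \mapsto \varphi(\cdot,\omega,x)$ has full support in path space. Joining $x$ to $y$ by a smooth curve $\eta$ and choosing the control $\dot u = \sigma^{-1}(\dot \eta - f(\eta))$, continuity of the flow in the driving path yields a positive-probability event on which $\varphi(t,\omega,B_r(x)) \subset B_{2r}(y)$. For \emph{contracting on large sets}: the absorbing ball from the proof of Theorem A reduces, with positive probability and for sufficiently large $t$, any $B_R(0)$ into a fixed ball $B_{R_0}(0)$, giving diameter $\leq R/4$ whenever $R \geq 8R_0$; the remaining bounded range of $R$ is handled by the local contraction produced in the first step.

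Once Theorem \ref{SynchroTheo} delivers that $A(\omega)$ is a singleton almost surely, statement (i) follows at once from the invariance property $\varphi(t,\omega,A(\omega)) = A(\theta_t \omega)$ of a random attractor. For (ii), the Markov disintegration $\mu_\omega = \lim_{t \to \infty} \varphi(t,\theta_{-t}\omega)\rho$ is a probability measure supported on $A(\omega)$; being a probability measure on a singleton, $\mu_\omega = \delta_{A(\omega)}$, and substitution into \eqref{Expectedmeasure} gives $\rho(C) = \mathbb{P}(A \in C)$ for every Borel $C$, so $A$ has density $p$ from \eqref{StatDens}. For (iii), Proposition \ref{Singularity}(i) together with Oseledets' theorem yields $\lambda_{\mathrm{top}} = \lim_{t \to \infty} t^{-1} \ln \|\Phi(t,\omega,Z)\|$ for $\mu$-a.e.~$(\omega,Z)$; since $\mu$ is supported on the graph of $A$, this limit is attained at $Z = A(\omega)$ for $\mathbb{P}$-a.e.~$\omega$, and $t \mapsto \Phi(t,\omega,A(\omega))$ is precisely the fundamental matrix of $\dot \xi = \rmD f(A(\theta_t\omega))\xi$.

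The main obstacle will be hypothesis (i) of Theorem \ref{SynchroTheo}: the random stable manifold theorem naturally supplies a \emph{random} neighbourhood of $(\omega_0, Z_0)$ on which contraction holds, whereas the hypothesis requires a \emph{deterministic} open set $U$. The standard remedy is to partition $\Omega$ by the random size of the stable manifold and restrict to a sub-event of positive probability on which that size is bounded below by a fixed $\rho > 0$; the joint continuity of $(\omega, Z) \mapsto \varphi(t,\omega,Z)$ then transfers the contraction to the deterministic ball $B_\rho(Z_0)$, as required.
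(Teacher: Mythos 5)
Your overall plan coincides with the paper's: verify the three hypotheses of Theorem~\ref{SynchroTheo} to collapse the attractor, then read off (i)--(iii) from invariance, the Markov disintegration, and the Multiplicative Ergodic Theorem. Your treatments of hypotheses (i) and (ii) (asymptotic stability via the random stable manifold theorem on a positive-probability sub-event, swift transitivity via support-theorem/controllability) match the spirit of the paper, which cites \cite{fgs16} for both. Your arguments for the concluding statements (i)--(iii) of the proposition are also essentially those of the paper, including the observation that $\mu$ is concentrated on the graph of $A$ and that $\mu_\omega=\delta_{A(\omega)}$.

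The gap is in your verification of hypothesis (iii), \emph{contracting on large sets}. The absorbing-ball argument only works for $R\geq 8R_0$: pullback absorption into $B_{R_0}(0)$ gives $\diam\leq 2R_0\leq R/4$ precisely in that range. For $R<8R_0$ you defer to ``the local contraction produced in the first step,'' but that contraction is only available on a ball $B_\rho(Z_0)$ of a \emph{fixed} (and possibly very small) radius $\rho$ coming from the stable manifold theorem, so it does not cover $R\in(\rho,8R_0)$. Shuttling the ball around via swift transitivity does not help either, since swift transitivity enlarges the radius rather than shrinking the diameter. The paper instead proves the hypothesis directly: it establishes a drift monotonicity estimate
\[
\langle f(x)-f(y),x-y\rangle < K\|x-y\|^2 \quad\text{for all }x,y\in B_r(z),
\]
with $K<0$ and $z=(R',0)$ for $R'$ sufficiently large, exploiting the cubic damping term; this gives \emph{monotonicity on large sets}, which \cite[Proposition~3.10]{fgs16} converts into contraction on large sets for every radius $r$. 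You would need to replace your absorbing-ball argument with an estimate of this kind (or some other argument covering all $R>0$) for the proof to close.
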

\begin{proof}
  In the first part of the proof, we show that the random dynamical system $\varphi$ generated by \eqref{NormalForm} fulfils the assumptions (i), (ii), and (iii) of Theorem~\ref{SynchroTheo}. Note that (i) follows from the negativity of the top Lyapunov exponent (see \cite[Lemma 4.1 and Corollary 4.4]{fgs16}), and swift transitivity holds for our system according to \cite[Proposition 4.9]{fgs16}. Hence, it remains to show contraction on large sets for $\varphi$. By definition of $f$, we have that
  \begin{align*}
   \big\langle f(x) - f(y),  x-y\big\rangle  \leq &\left(\alpha - a\tfrac{1}{2}(\|x\|^2 + \|y\|^2) \right) \| x- y \|^2 \\
  & + b (x_1 y_2 - y_1 x_2) \left(2\langle x-y, y\rangle + \|x - y\|^2 \right).
  \end{align*}
  Fix $r >0$, and consider $B_r(z)$, where $z = (R,0)$ for some $R>0$ to be chosen large enough. For any $x, y \in B_r(z)$, observe that
  \begin{equation*}
    (x_1 y_2 - y_1 x_2)\langle x-y, y\rangle  \leq r \|y\|  \| x- y \|^2 +  r^2  \| x- y \|^2
  \end{equation*}
  and
  \begin{equation*}
   (x_1 y_2 - y_1 x_2) \|x - y\|^2 \leq 2 \|x - y\|^2 \|y\|  \| x- y \|\,.
  \end{equation*}
  This implies that for all $x, y \in B_r(z)$,
  \begin{align*}
  \langle f(x) - f(y), x-y\rangle &\leq \| x- y \|^2 \left(\alpha - a \tfrac{1}{2}(\|x\|^2 + \|y\|^2) + 2b ( r \|y\|  +  r^2 + 2 \|y\| r)  \right)\\
   &< K \| x- y \|^2
  \end{align*}
  for some $K < 0$ if $R$ is big enough (due to the quadratic terms, $K$ has negative sign). This property is called monotonicity on large sets, which implies contraction on large sets due to~\cite[Proposition~3.10]{fgs16}.

  We now prove the statements (i), (ii) and (iii) of the proposition.

  (i) This follows immediately from the definition of a random attractor (see Appendix).

  (ii)  Note that $\omega \mapsto A(\omega)$ is measurable with respect to $\mathcal{F}_{- \infty}^0$, and thus, $\mu_{\omega} := \delta_{A(\omega)}$ defines a Markov measure. The invariance of $\mu_{\omega}$ follows directly from (i). Hence, $\{\mu_\omega\}_{\omega\in\Omega}$ is the disintegration of the ergodic invariant measure $\mu$ associated with the ergodic stationary measure $\rho$, and we obtain from \eqref{Expectedmeasure} that for all $C \in \mathcal{B}(\mathbb{R}^2)$
  $$ \mathbb P(\{\omega\in \Omega: A(\omega)\in C\}) = \int_{\Omega} \delta_{A(\omega)}(C) \; \rmd \mathbb{P}(\omega)  = \int_{\Omega} \mu_{\omega}(C) \; \rmd \mathbb{P}(\omega) = \rho(U) =\int_{C} p(x,y) \; \rmd x \,\rmd y \,.$$

  (iii) According to the Multiplicative Ergodic Theorem, the existence of the Lyapunov spectrum holds for a set $M\subset \Omega \times \mathbb{R}^2$ of full $\mu$-measure. We observe that the set
  $$D = \bigcup_{\omega \in \Omega} \{(\omega, A(\omega))\}$$ has full $\mu$-measure, since
  $$ \mu(D) = \int_{\Omega} \mu_{\omega} (\{A(\omega)\}) \; \rmd \mathbb{P}(\omega) = \int_{\Omega} \delta_{A(\omega)} (\{A(\omega)\}) \; \rmd \mathbb{P}(\omega) = 1\,. $$
  Hence, $\mu ( M \cap D) =1 $. Since the Oseledets space associated with the second Lyapunov exponent has zero Lebegue measure for any $(\omega, x) \in M \cap D$, the claim follows.
\end{proof}

Finally, we prove Theorem B.

\begin{proof}[Proof of Theorem  B]
The existence of the attracting random equilibrium $A:\Omega\rightarrow \mathbb R^2$ has been shown in Proposition~\ref{Pullback_Attractor}.
Define $\psi:\mathbb R_{0}^+\times \Omega\times \mathbb R^2\rightarrow \mathbb R^2$ by
\[
\psi(t,\omega,x)=\varphi(t,\omega,A(\omega)+x)-\varphi(t,\omega,A(\omega)).
\]
Obviously, $\psi(t,\omega,0)=0$ and $\psi(t,\omega,x)$ is the solution of the random differential equation
\begin{equation}\label{PerturbedSystems}
\dot \xi= \rmD f(A(\theta_t\omega))\xi + R(t,\omega,\xi),
\end{equation}
where
\[
R(t,\omega,\xi):=
f(A(\theta_t\omega)+\xi)-f(A(\theta_t\omega))-Df(A(\theta_t\omega))\xi\,.
\]
Note that for $R\equiv 0$, the top Lyapunov exponent of the homogeneous equation \eqref{PerturbedSystems} is negative. Using the stable manifold theorem \cite[Theorems~7.5.5 and 7.5.16]{a98}, there exists $r(\omega)>0$ such that for almost all $\omega\in\Omega$ and $x\in B_{r(\omega)}(0)$, one has
\begin{equation}\label{Stablemanifold_Application}
\lim_{t \to \infty}
e^{-\frac{\lambda_{\rm top}}{2} t} \|\psi(t,\omega,x)\|=\lim_{t \to \infty}
e^{-\frac{\lambda_{\rm top}}{2} t}\|\varphi(t,\omega,x+A(\omega))-A(\theta_t\omega)\|=0\,.
\end{equation}
Choose and fix an arbitrary initial value $x\in \mathbb R^2$, and define
\[
V:=\left\{\omega\in\Omega: \limsup_{t \to \infty} e^{-\frac{\lambda_{\rm top}}{2}t} \|\varphi(t,\omega,x)-A(\theta_t\omega)\|=0\right\}.
\]
It remains to show that $\mathbb P(V)=1$. For each $n\in\mathbb N$, we define
\[
\Omega_n:=\left\{
\omega\in\Omega:
\varphi(t,\theta_{-t}\omega,x)\in B_{r(\omega)}(A(\omega)) \mbox{ for all } t\geq n\right\}.
\]
Note that $(\Omega_n)_{n\in\mathbb N}$ is an increasing sequence of measurable sets. By virtue of Proposition \ref{Pullback_Attractor}, the random equilibrium $a$ is the random attractor of $\varphi$, which implies $\lim_{n \to \infty}\mathbb P(\Omega_n)=1$. From the definition of $\Omega_n$, we derive that  $\varphi(n,\theta_{-n}\omega,x)\in B_{r(\omega)}(A(\omega))$ for all $\omega\in\Omega_n$. Together with \eqref{Stablemanifold_Application}, this implies that for all $\omega\in\Omega_n$, one has
\begin{align*}
0&=\limsup_{t \to \infty} e^{-\frac{\lambda_{\rm top}}{2}t} \|\varphi(t,\omega,\varphi(n,\theta_{-n}\omega,x))-A(\theta_t\omega)\|\\[0.5ex]
&=
\limsup_{t \to \infty} e^{-\frac{\lambda_{\rm top}}{2}t} \|\varphi(t+n,\theta_{-n}\omega,x)-A(\theta_t\omega)\|\,.
\end{align*}
 Consequently, $\theta_{-n}\Omega_n\subset V$, and thus, $\mathbb P(V)=1$, which finishes the proof.
\end{proof}

\subsection{Small shear implies synchronisation}

We prove Theorem~C in this subsection, which says that small shear implies negativity of the top Lyapunov exponent. The main ingredient for the proof of Theorem~C is the inequality in Proposition~\ref{Singularity}(i).

We first need the following estimate on the function $\lambda^+$ defined as in \eqref{UpperFunction}.
\begin{lemma}\label{Inequality_Df}
For any $Z=(x,y)^\top\in \mathbb R^2$, we have
\[
\lambda^+(Z)\leq  \alpha + \big(\sqrt{a^2+b^2}-2a\big)(x^2 +y^2)\,,
\]
and equality holds if and only if $xy=0$.
\end{lemma}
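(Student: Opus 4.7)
The plan is to reduce the variational characterization of $\lambda^+(Z)$ to an explicit optimization by exploiting the rotation-dilation structure of $f$. Writing $f(Z) = LZ - \|Z\|^2 MZ$ with $L = \bigl(\begin{smallmatrix}\alpha & -\beta \\ \beta & \alpha\end{smallmatrix}\bigr)$ and $M = \bigl(\begin{smallmatrix}a & -b \\ b & a\end{smallmatrix}\bigr)$, a direct differentiation gives $Df(Z) = L - 2(MZ)Z^{\top} - \|Z\|^2 M$. Since the skew-symmetric parts of $L$ and $M$ do not contribute to the quadratic form $\langle Df(Z)r,r\rangle$, and their symmetric parts are $\alpha I$ and $aI$ respectively, I would write
\[
\langle Df(Z) r, r\rangle = \alpha - a\|Z\|^2 - 2\langle r, Z\rangle \langle r, MZ\rangle \quad \text{for all unit } r \in \mathbb{R}^2.
\]
Maximizing the left-hand side over unit $r$ thus reduces to minimizing the bilinear quantity $\langle r, Z\rangle\langle r, MZ\rangle$.

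The key observation I would exploit is that $M$ acts on $\mathbb{R}^2$ as a dilation by $\sqrt{a^2+b^2}$ composed with a rotation by $\psi := \arctan(b/a)$. Parametrizing $r = (\cos\theta, \sin\theta)$ and $Z/\|Z\| = (\cos\phi, \sin\phi)$, and setting $\eta := \theta - \phi$, a sum-to-product identity produces
\[
\langle r, Z\rangle \langle r, MZ\rangle = \tfrac{1}{2}\sqrt{a^2+b^2}\,\|Z\|^2 \bigl[\cos\psi + \cos(2\eta-\psi)\bigr].
\]
Minimizing over $\eta$ yields $\tfrac{1}{2}\sqrt{a^2+b^2}\,\|Z\|^2(\cos\psi-1)$, and substituting $\sqrt{a^2+b^2}\cos\psi = a$ and reassembling gives the claimed upper bound on $\lambda^+(Z)$.

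I do not foresee a serious obstacle; once the symmetric-part reduction is in place, the rest is a computation. A more coordinate-free route, which I would use as a sanity check, is to notice that $\langle Df(Z)r, r\rangle = (\alpha - a\|Z\|^2) - \langle K(Z)r, r\rangle$ with $K(Z) := (MZ)Z^{\top} + Z(MZ)^{\top}$ a rank-two symmetric matrix, so that $\lambda^+(Z) = \alpha - a\|Z\|^2 - \lambda_{\min}(K(Z))$. A short expansion gives $\mathrm{tr}\,K(Z) = 2a\|Z\|^2$ and $\det K(Z) = -b^2\|Z\|^4$, whence $\lambda_{\min}(K(Z)) = (a - \sqrt{a^2+b^2})\|Z\|^2$, recovering the bound. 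I note that both routes actually produce equality for every $Z$, so the equality-iff-$xy=0$ clause is more restrictive than what the computation delivers; only the inequality is needed in the ensuing proof of Theorem~C.
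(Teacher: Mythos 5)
Your proof is correct, and both of your routes --- the angle parametrization exploiting the rotation--dilation form of $M$, and the coordinate-free reduction to the eigenvalues of the symmetric matrix $K(Z) = (MZ)Z^{\top} + Z(MZ)^{\top}$ --- are cleaner than the paper's argument. The paper instead expands $\rmD f(x,y)$ entry by entry, parametrizes the unit vector as $r=(\sin\phi,\cos\phi)$ in $\langle \rmD f(x,y)r,r\rangle$, collects the $\cos 2\phi$ and $\sin 2\phi$ terms, and takes the amplitude of the resulting sinusoid; your observation that the symmetric parts of $L$ and $M$ are scalar multiples of the identity, so the whole problem collapses to minimizing $\langle r,Z\rangle\langle r,MZ\rangle$ over unit $r$, removes most of that bookkeeping.

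More importantly, your remark that equality in fact holds for \emph{every} $Z$ is correct, and the paper's ``equality iff $xy=0$'' clause is a mistake traceable to an algebraic slip in its proof. The paper asserts
\[
(ax^2+2bxy-ay^2)^2+(bx^2-2axy-by^2)^2 = (a^2+b^2)(x^2-y^2)^2+4b^2x^2y^2\,,
\]
but the cross terms $\pm 4abxy(x^2-y^2)$ cancel and the $x^2y^2$ coefficient is $4a^2+4b^2$, not $4b^2$; the correct value is $(a^2+b^2)(x^2-y^2)^2+4(a^2+b^2)x^2y^2=(a^2+b^2)(x^2+y^2)^2$, so the paper's final bound is an exact identity, agreeing with both of your computations. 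One downstream point you slightly underweight: the proof of Theorem~C \emph{does} lean on the spurious almost-everywhere strictness of the Lemma to write $\lambda_{\mathrm{top}} < \alpha + (\sqrt{a^2+b^2}-2a)\int(x^2+y^2)p\,\rmd x\,\rmd y$, which is what yields strict negativity at the boundary $|b|=\kappa$. With the Lemma corrected this step is only a non-strict $\leq$, and Theorem~C should then assert $\lambda_{\mathrm{top}}<0$ for $|b|<\kappa$ rather than $|b|\le\kappa$.
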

\begin{proof}
  Using the following explicit form of $\rmD f(Z)$,
  \begin{equation*}
    Df(Z) = \begin{pmatrix}
    \alpha - a y^2 - 3a x^2 -2byx & -\beta -2axy -bx^2-3by^2\\
    \beta- 2axy + by^2 + 3bx^2&  \alpha - ax^2 - 3ay^2 + 2byx
    \end{pmatrix},
  \end{equation*}
  we obtain for any $ r \in \mathbb{R}^2$ with $\|r\|=1$ that
  \begin{align*}
      \langle Df(x,y)r,r\rangle  &= r_1^2(\alpha - ay^2 - 3ax^2) + r_1 r_2 (-\beta - 2axy) + r_1 r_2 (\beta - 2axy) + r_2^2 (\alpha - ax^2 - 3ay^2) \\
  	&\quad -2byx r_1^2 + 2byx r_2^2 + r_1 r_2(2bx^2-2by^2)\\
  	&= \alpha - a(x^2 +y^2)+ 2b(r_1r_2x^2-r_1r_2y^2+(r_2^2-r_1^2)xy) - 2 a(r_1 x + r_2 y)^2\,.
  \end{align*}
  Since $r_1^2+r_2^2=1$, it is possible to write that $r_1=\sin\phi$ and $r_2=\cos \phi$ for some $\phi\in [0,2\pi)$. Thus, a simple calculation yields that
  \begin{align*}
  \langle Df(x,y)r,r\rangle
  &=
  \alpha -2a (x^2+y^2)+ (ax^2+bxy-ay^2)\cos2\phi+ (bx^2-2axy-by^2)\sin 2\phi\\
  &\leq
  \alpha -2a (x^2+y^2)+\sqrt{(ax^2+2bxy-ay^2)^2+(bx^2-2axy-by^2)^2}\\
  &=
  \alpha -2a (x^2+y^2)+\sqrt{(a^2+b^2)(x^2-y^2)^2+ 4b^2 x^2y^2}\\
  &\leq
  \alpha -2a (x^2+y^2)+\sqrt{a^2+b^2} (x^2+y^2)\,,
  \end{align*}
  which completes the proof.
\end{proof}

\begin{proof}[Proof of Theorem C]
  From inequality \eqref{UpperEstimate}, we derive that 	
  	\begin{equation*}
  	\lambda_{\mathrm{top}} =\limsup_{t\to\infty}\frac{1}{t}\ln\|\Phi(t,\omega,s)\|
  \leq \lim_{t \to \infty} \frac{1}{t} \int_0^t \lambda^+( \varphi(s,\omega,x))\, \rmd s\,.
  	\end{equation*}
  Note that the skew product flow $\Theta_s(\omega,Z)=(\theta_s\omega,\varphi(s,\omega,Z))$ preserves the probability measure $\mu$, and $\lambda^{+}$ is integrable. By using Birkhoff's Ergodic Theorem, we obtain that
  \[
  \lambda_{\mathrm{top}}
  \leq
  \int_{\mathbb{R}^2} \lambda^+(x,y) p(x,y)\, \rmd x \,\rmd y\,,
  \]
  where the density function $p$ is as in \eqref{StatDens}. Thus, by virtue of Lemma \ref{Inequality_Df}, we arrive at
  \begin{equation*}\label{Inequality_LE}
    \lambda_{\mathrm{top}}
    <
    \alpha + \big(\sqrt{a^2+b^2}-2a\big) \int_{\mathbb{R}^2}(x^2+y^2)  p(x,y)\, \rmd x \,\rmd y\,.
  \end{equation*}
  Inserting the explicit form of the density function $p$ in the preceding inequality  gives that
  \begin{equation}\label{LambdaTop_Ineq}
  \lambda_{\mathrm{top}}
  <
  \alpha+ (\sqrt{a^2+b^2}-2a)K \int_{\mathbb{R}^2}(x^2+y^2) \exp \left( \frac{2 \alpha (x^2 + y^2) - a(x^2 + y^2)^2}{2 \sigma^2} \right) \,\rmd x \,\rmd y\,,
  \end{equation}
  with the normalization constant $K=\frac{ 2 \sqrt{2 a} }{\sqrt{\pi} \sigma \erfc \big(- \alpha/\sqrt{2 a \sigma^2}   \big)}$.
  Using polar coordinates, we obtain that
  	\begin{align*}
  	&\quad K \int_{\mathbb{R}^2} \left(\alpha - a(x^2 + y^2) \exp \left( \frac{2 \alpha (x^2 + y^2) - a(x^2 + y^2)^2}{2 \sigma^2} \right)\right)\,\rmd x \,\rmd y \\
  	&= 2 \pi K \int_{0}^{\infty} (\alpha - a r^2) r \exp \left( \frac{2 \alpha r^2 - ar^4}{2 \sigma^2} \right)\, \rmd r \\
  	&= - \pi \sigma^2 K\,.
  	\end{align*}
  This implies
  \begin{equation*}
    \int_{\mathbb{R}^2} (x^2 + y^2) \exp \left( \frac{2 \alpha (x^2 + y^2) - a(x^2 + y^2)^2}{2 \sigma^2} \right)\,\rmd x \,\rmd y
    =
    \frac{\alpha}{Ka}+\frac{\pi\sigma^2}{a}\,,
  \end{equation*}
  which together with \eqref{LambdaTop_Ineq} implies that
  \begin{equation*}
  \lambda_{\mathrm{top}}
  <
  \alpha+\big(\sqrt{a^2+b^2}-2a\big)\left(
  \frac{\alpha}{a}+\frac{\pi K \sigma^2}{a}
  \right)\,.
  \end{equation*}
  Consequently,
  \begin{equation}\label{Integral}
  \lambda_{\mathrm{top}}
  <
  -\pi K\sigma^2
  +
  \left(\sqrt{1+\frac{b^2}{a^2}}-1\right)(\alpha+\pi K \sigma^2)\,.
  \end{equation}
  Note that by definition of $K$ it is easy to see that $\alpha+\pi K \sigma^2> 0$. Therefore, for all $|b|\leq \kappa$, we have
  \[
  \lambda_{\mathrm{top}}
  <
  -\pi K\sigma^2+\left(\sqrt{1+\frac{\kappa^2}{a^2}}-1\right)(\alpha+\pi K \sigma^2)=0\,,
  \]	
  which completes the proof of this theorem.
\end{proof}

\section{Random Hopf bifurcation} \label{Random Hopf}

We analyse random bifurcations for the stochastic differential equation~\eqref{NormalForm} in this section, which captures qualitative changes in the the asymptotic as well as the finite-time behaviour.

We first need the following preparatory proposition.

\begin{proposition} \label{smallneighbour}
  Consider \eqref{NormalForm} such that $|b|\leq \kappa$. Then for any $y \in \mathbb{R}^2$, $\epsilon > 0$ and $T \geq 0$, there exists a set $E \in \mathcal{F}_{-\infty}^T$ with $\mathbb{P}(E)>0$ such that
  \begin{displaymath}
     A(\theta_s \omega) \in B_{\epsilon}(y) \fa s \in [0,T] \mbox{ and }  \omega \in E\,,
  \end{displaymath}
  where $\{A(\omega)\}$  is the unique random equilibrium for \eqref{NormalForm} from Proposition \ref{Pullback_Attractor}.
\end{proposition}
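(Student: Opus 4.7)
The plan is to use the identity $A(\theta_s\omega) = \varphi(s,\omega,A(\omega))$ for $s \geq 0$ (Proposition~\ref{Pullback_Attractor}(i)) and to split the desired event into two independent components: an $\mathcal{F}_{-\infty}^0$-measurable event ensuring that $A(\omega)$ lies close to $y$, and an $\mathcal{F}_0^T$-measurable event ensuring that the noise on $[0,T]$ keeps the forward trajectory inside $B_\epsilon(y)$. Independence of these two events (by independence of the Wiener increments on $(-\infty,0]$ and $[0,T]$) then reduces the problem to checking that each has positive probability.

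For the first factor, Proposition~\ref{Pullback_Attractor}(ii) says that $A(\omega)$ is distributed according to the stationary density $p$ from \eqref{StatDens}, which is strictly positive on $\mathbb{R}^2$. Hence for any $\delta>0$, the event $E_1 := \{\omega : A(\omega) \in B_\delta(y)\}$ is $\mathcal{F}_{-\infty}^0$-measurable with $\mathbb{P}(E_1)>0$. To control the forward trajectory, I would introduce the smooth control $u(s) := -f(y)s/\sigma$, so that the associated ODE $\dot{\tilde{z}}(s) = f(\tilde{z}(s)) - f(y)$ has $y$ as a rest point. By continuous dependence on initial conditions on the compact interval $[0,T]$, one can shrink $\delta \in (0,\epsilon/4)$ so that every solution with $\tilde{z}(0) \in B_\delta(y)$ remains in $B_{\epsilon/2}(y)$ throughout $[0,T]$.

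For the second factor, for $\eta > 0$ I would define the Brownian tube event
\[
E_2 := \Bigl\{\omega \in \Omega : \sup_{s\in[0,T]} \|\omega(s) - u(s)\| \leq \eta\Bigr\}\,,
\]
which is $\mathcal{F}_0^T$-measurable and thus independent of $E_1$, and has strictly positive probability by the standard tube estimate for two-dimensional Brownian motion (equivalently, via Girsanov's theorem after absorbing the smooth drift $\dot{u}$). For $\omega \in E_1 \cap E_2$, writing the integral forms of $\varphi(s,\omega,A(\omega))$ and of $\tilde{z}(s)$ starting at $A(\omega)$ and subtracting yields, as long as the SDE trajectory stays in $B_\epsilon(y)$, the Gronwall estimate
\[
\|\varphi(s,\omega,A(\omega)) - \tilde{z}(s)\| \leq \sigma \eta\, e^{LT}\,,
\]
where $L$ denotes the Lipschitz constant of $f$ on $B_\epsilon(y)$. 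Choosing $\eta$ so small that $\sigma\eta e^{LT}<\epsilon/2$ and combining with $\tilde{z}(s)\in B_{\epsilon/2}(y)$ shows that the trajectory cannot first exit $B_\epsilon(y)$ on $[0,T]$.

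Setting $E := E_1 \cap E_2 \in \mathcal{F}_{-\infty}^T$ and invoking independence then gives $\mathbb{P}(E) = \mathbb{P}(E_1)\mathbb{P}(E_2) > 0$ and $A(\theta_s \omega) \in B_\epsilon(y)$ for all $s \in [0,T]$ and $\omega \in E$. The main technical subtlety will be making the Gronwall comparison uniform over the random initial condition $A(\omega)$; this is handled by a standard bootstrap on the first exit time from $B_\epsilon(y)$, which confines all relevant trajectories to a fixed compact set on which $f$ admits a deterministic Lipschitz constant, independently of $\omega$.
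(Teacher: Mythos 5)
Your proof is correct and takes a genuinely different, somewhat more direct route than the paper in both of its main steps. To establish that $\mathbb{P}(A(\omega)\in B_\delta(y))>0$, the paper does not invoke the explicit density: it covers $\mathbb{R}^2$ with small balls to find some $x$ with $\mathbb{P}(A(\omega)\in B_{\epsilon/4}(x))>0$, then uses a controllability result from \cite{fgs16} together with the Markov property and $\theta$-invariance of $\mathbb{P}$ to transport $A(\omega)$ from near $x$ to near $y$. You instead use Proposition~\ref{Pullback_Attractor}(ii) directly: $A$ is distributed according to the strictly positive stationary density $p$ from \eqref{StatDens}, so $\mathbb{P}(A(\omega)\in B_\delta(y))>0$ for every $y$ and $\delta>0$ immediately. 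Both are valid here; the paper's argument has the merit of not depending on the explicit form of $p$, but since $p$ is available in this model, your shortcut is legitimate and shorter. For keeping the forward trajectory near $y$ on $[0,T]$, the paper uses the $\omega$-dependent control $h(t)=-tf(A(\omega))/\sigma$ that pins the solution at the (random) point $A(\omega)$ exactly, then appeals to continuity of $g\mapsto\varphi(\cdot,g,z)$ in the sup norm; making the resulting tube radius $\delta$ uniform over $\omega\in E_0$ is a mild subtlety the paper glosses over. You use the deterministic control $u(s)=-f(y)s/\sigma$ that pins $y$, absorb the error from starting at $A(\omega)\in B_\delta(y)$ rather than at $y$ via continuous dependence and a Gronwall/first-exit-time bootstrap, and the tube event $E_2$ is then unambiguously a fixed subset of path space. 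Your version is cleaner on this point and the independence of $E_1$ and $E_2$ is clearly the independence of $\mathcal{F}_{-\infty}^0$ and $\mathcal{F}_0^T$.
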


\begin{proof}
  Let $\epsilon > 0$ and $T \geq 0$. Since $\Omega = \bigcup_{x \in \mathbb{Q}^2} \{ \omega \in\Omega\,:\, A(\omega) \in B_{\epsilon/4}(x) \}$,
  there exists an $x \in \mathbb{R}^2$ such that $$A_0 := \{ \omega \in\Omega\,:\, A(\omega) \in B_{\epsilon/4}(x)\}$$ has positive measure. From \cite[Proposition 3.10]{fgs16} we know that there exists $t_0 > 0$ such that
  $$
  B_0 := \left\{ \omega\in\Omega\,:\, \varphi (t_0,\omega, x') \in B_{\epsilon/2}(y) \mbox{ for all }x' \in B_{\epsilon/4}(x) \right\}$$
  has positive measure.
  Since $\theta$ is measure preserving, the two sets
  \begin{align*}
   A_1 &:= \theta_{t_0} A_0 = \{ \omega\in\Omega\,:\, A(\theta_{-t_0} \omega) \in B_{\epsilon/4}(x)\}\,, \\
   B_1  &:= \theta_{t_0} B_0 = \left\{ \omega\in\Omega\,:\, \varphi(t_0,\theta_{-t_0} \omega, x') \in B_{\epsilon/2}(y) \mbox{ for all } x' \in B_{\epsilon/4}(x) \right\}
  \end{align*}
  have positive measure.
  Due to the Markov property of the random dynamical system, we observe that $B_1$ and $A_1$ are independent, and hence, $\mathbb{P}(B_1 \cap A_1) > 0$. Thus, the set
  $$ E_0 = \{ \omega \in\Omega\,:\, A(\omega) \in B_{\epsilon/2}(y)\} \supset A_1 \cap B_1 $$
  has positive measure and clearly lies in $ \mathcal{F}_{-\infty}^{0}$. Fix $\omega \in E_0$.
  Similarly to the proof of \cite[Proposition~3.10]{fgs16}, define
  $$ h(t) := - \frac{t f(A(\omega))}{\sigma} \fa  t \in[0,T]\,,$$
  where $f$ denotes the vector field of the drift in \eqref{MainEq}. We write $\varphi(t,g,z)$, $t\in[0,T]$, for the solution of \eqref{MainEq} with initial condition $z$ and path $g \in C_0^T := \{ \bar g \in C([0,T], \mathbb{R}^2)\,:\, \bar g(0)=0 \}$.
  We can infer that $\varphi(t,h,A(\omega)) =  A(\omega)$ for all $t \in [0,T]$. Recall that the map $g \mapsto \varphi(\cdot,g,z)$ is continuous from $C_0^T$ to $C([0,T], \mathbb{R}^2)$ with respect to the supremum norm $\| \cdot \|_{\infty}$. Hence, there is a $\delta > 0$ such that for all $g \in C_{\delta} := \{ \bar g \in C_0^T\,:\, \| \bar g- h \| \leq \delta \}$, we have
  $$ \| \varphi(t,g, A(\omega)) - \varphi(t,h, A(\omega)) \| < \epsilon/2  \fa t \in [0,T]\,.$$
  Since the set $E_+ := \big\{\omega\,:\, \omega|_{[0,T]} \in C_{\delta} \big\}$ has positive measure and is independent of $E_0$, the set  $E = E_0 \cap E_+ \in \mathcal{F}_{-\infty}^{T}$ has positive measure and satisfies $$ A(\theta_t \omega) \in B_{\epsilon}(y) \fa t \in [0,T] \mbox{ and }\omega \in E\,,$$
  by the above construction.
\end{proof}

\subsection{Bifurcation for small shear} \label{bifsmallsh}

In this subsection, we consider the stochastic differential equation~\eqref{NormalForm} with small enough shear such there exists a random equilibrium for $\alpha$ close to zero. We prove in Theorem~\ref{Uniattract} that the random equilibrium $A:\Omega\to \R^2$ loses uniform attractivity at the deterministic bifurcation point $\alpha = 0$. On the other hand, we will observe a loss of hyperbolicity at the bifurcation point in the dichotomy spectrum associated with the random equilibrium. Moreover, we can show that $A:\Omega\to \R^2$ is finite-time attractive before, but not after the bifurcation point, indicated by a transition from zero to positive probability of positive finite-time Lyapunov exponents.

We call the random attractor $A$ \emph{locally uniformly attractive} if there exists a $\delta > 0$ such that
\begin{displaymath}
  \lim_{t \to \infty} \textstyle \sup_{x \in B_{\delta}(0)} \operatorname{ess} \operatorname{sup}_{\omega \in \Omega} \| \varphi(t, \omega ,A(\omega) + x) - A(\theta_t \omega) \| = 0\,.
\end{displaymath}
We call it \emph{globally uniformly attractive} if the above holds for any $\delta > 0$.

\begin{theorem}\label{Uniattract}
  Consider the stochastic differential equation \eqref{NormalForm} such that there is a unique attracting random equilibrium $A:\Omega\to \R^2$ (see Proposition~\ref{Pullback_Attractor} and Theorem C). Then for $\alpha < 0$ and $\left| b \right| \leq a$, the random attractor $A:\Omega\to\R^2$ is globally uniformly attractive. Furthermore, for all pairs of initial conditions $U,V\in \mathbb R^2$, we have
  \[
  \|\phi(t,\omega, U)-\phi(t,\omega,V)\|
  \leq e^{\alpha t} \|U-V\|\fa t\ge 0\,.
  \]
  For $\alpha > 0$, the random attractor $A:\Omega\to\R^2$ is not even locally uniformly attractive.
\end{theorem}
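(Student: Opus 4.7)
Writing $f(Z) = (\alpha I + \beta J)Z - \|Z\|^2(aI + bJ)Z$ with $J$ the standard skew-symmetric rotation, the plan is to first show the algebraic inequality $\langle f(x) - f(y), x - y\rangle \leq \alpha\|x - y\|^2$ under $|b|\leq a$. Introducing $s = (x+y)/2$, $d = x - y$ and using $\langle Jv, v\rangle = 0$, a direct expansion yields
\begin{equation*}
  \langle f(x) - f(y), x - y\rangle = \alpha\|d\|^2 - a\|s\|^2\|d\|^2 - \tfrac{a}{4}\|d\|^4 - 2a\langle s,d\rangle^2 - 2b\langle s, d\rangle\langle Js, d\rangle.
\end{equation*}
Setting $u = \langle s, d\rangle$ and $v = \langle Js, d\rangle$ (so $u^2 + v^2 = \|s\|^2\|d\|^2$), the three negative corrections rearrange into $3au^2 + 2buv + av^2 + \tfrac{a}{4}\|d\|^4$, which is nonnegative precisely when the matrix $\bigl(\begin{smallmatrix} 3a & b \\ b & a \end{smallmatrix}\bigr)$ is positive semidefinite, i.e.~$b^2 \leq 3a^2$; this comfortably covers $|b|\leq a$.

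\textbf{Step 2 (Global uniform attractivity for $\alpha < 0$).} Because the noise in \eqref{NormalForm} is additive, the difference $\Delta(t) := \varphi(t, \omega, U) - \varphi(t, \omega, V)$ satisfies the pathwise deterministic ODE $\dot\Delta = f(\varphi(\cdot, \omega, U)) - f(\varphi(\cdot, \omega, V))$ with no noise term. Step~1 yields $\tfrac{d}{dt}\|\Delta\|^2 \leq 2\alpha\|\Delta\|^2$ and hence the stated contraction $\|\Delta(t)\| \leq e^{\alpha t}\|\Delta(0)\|$. Specialising $V = A(\omega)$ and using $\varphi(t, \omega, A(\omega)) = A(\theta_t\omega)$ from Proposition~\ref{Pullback_Attractor}, the right-hand side is $e^{\alpha t}\|U - A(\omega)\|$, which is deterministic in $\omega$ and yields global uniform attractivity when $\alpha < 0$.

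\textbf{Step 3 (Loss of uniform attractivity for $\alpha > 0$).} I would argue by contradiction via a Jacobian/area estimate. Suppose $A$ is locally uniformly attractive for some $\delta > 0$. Pick $\epsilon, \epsilon_1 > 0$ so small that $4a(\epsilon + \epsilon_1)^2 < \alpha$, and choose $T_0$ such that $\|\varphi(s, \omega, A(\omega) + x) - A(\theta_s\omega)\| \leq \epsilon$ for all $s \geq T_0$, $\|x\| \leq \delta$ and a.e.~$\omega$. For any $T > T_0$, Proposition~\ref{smallneighbour} with $y = 0$ produces a positive-probability event on which $A(\theta_s \omega) \in B_{\epsilon_1}(0)$ for $s \in [0, T]$; intersecting with $\{\sup_{s \in [0, T_0]}\|\omega(s)\| \leq N\}$ (positive probability for $N$ large) and invoking Gronwall-type bounds for the conjugated random ODE of Section~\ref{Generation}, I would obtain a positive-probability event $\tilde E_T$ on which $\|\varphi(s, \omega, z)\| \leq M(\delta, \epsilon_1, T_0, N)$ for $s \in [0, T_0]$ and $\|\varphi(s, \omega, z)\| \leq \epsilon + \epsilon_1$ for $s \in [T_0, T]$, uniformly in $z \in B_\delta(A(\omega))$.

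\textbf{Concluding the contradiction.} Using the trace formula $\operatorname{tr} Df(Z) = 2\alpha - 4a\|Z\|^2$ and Liouville's formula, every $z \in B_\delta(A(\omega))$ satisfies on $\tilde E_T$
\begin{equation*}
  |\det D_z \varphi(T, \omega, z)| = \exp\Bigl(2\alpha T - 4a\int_0^T \|\varphi(s, \omega, z)\|^2\,ds\Bigr) \geq \exp(\alpha T - C),
\end{equation*}
with $C$ independent of $T$. The change-of-variables formula then gives a Lebesgue measure bound $|\varphi(T, \omega, B_\delta(A(\omega)))| \geq \pi\delta^2 e^{\alpha T - C}$, while the uniform attractivity hypothesis forces this image to sit inside $B_\epsilon(A(\theta_T\omega))$ of area $\pi\epsilon^2$. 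Taking $T$ large yields $e^{\alpha T} > (\epsilon/\delta)^2 e^C$, the required contradiction. The main obstacle is the transient window $[0, T_0]$: before uniform attractivity applies, the trajectories $\varphi(s, \omega, z)$ for $z \in B_\delta(A(\omega))$ must still be controlled, which is why coupling Proposition~\ref{smallneighbour} with a sup-norm restriction on the driving Brownian path is needed to produce a single event of positive probability on which both bounds hold.
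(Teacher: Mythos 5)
Your Step 1 reaches the same pointwise monotonicity inequality as the paper but via a genuinely cleaner route. Writing $s=(x+y)/2$, $d=x-y$ and reducing the nonlinear correction to the quadratic form $3au^2+2buv+av^2+\tfrac{a}{4}\|d\|^4$ with $u=\langle s,d\rangle$, $v=\langle Js,d\rangle$, $u^2+v^2=\|s\|^2\|d\|^2$, isolates the obstruction as positive semidefiniteness of $\bigl(\begin{smallmatrix}3a&b\\ b&a\end{smallmatrix}\bigr)$ and in fact yields the contraction for all $|b|\le\sqrt{3}a$, not only $|b|\le a$. The paper instead expands everything in coordinates, writes the residual $R$ in terms of $r_t,\widehat r_t$ and the cross terms, and uses the Cauchy--Schwarz--type bound $(|xy|+|uv|)^2\le(x^2+u^2)(y^2+v^2)$, which only closes under $|b|\le a$. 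Step~2 is identical in both: pathwise cancellation of the additive noise plus Gronwall.

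Your Step 3 is a genuinely different argument from the paper's. The paper argues geometrically: it runs the noise-free dynamics from two symmetric initial conditions $(\pm\delta/4,0)$, uses the oddness of the drift and the attractivity of the deterministic limit cycle to see they end up antipodal at distance $>\sqrt{\alpha/a}$, and then invokes continuity in the driving path (small sup-norm event) to conclude the same separation for the SDE on a positive-probability set, contradicting uniform contraction into a ball of radius $\tfrac14\sqrt{\alpha/a}$. Your argument instead reads off $\operatorname{tr}Df(Z)=2\alpha-4a\|Z\|^2$ and Liouville's formula, lower-bounds the Jacobian determinant uniformly over the $\delta$-ball on a positive-probability event where trajectories stay near the origin, and derives an exponential area growth incompatible with containment in a fixed $\epsilon$-ball. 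Both approaches are valid; yours dispenses with the oddness of the drift and the limit-cycle picture, and gives a quantitative lower bound on the spreading rate, but it does require a couple of details to be nailed down: (a) the event $\tilde E_T$ has to be constructed so that the three constraints --- $A(\omega)$ near $0$, $A(\theta_s\omega)\in B_{\epsilon_1}(0)$ on $[0,T]$, and $\sup_{s\in[0,T_0]}\|\omega(s)\|\le N$ --- are simultaneously of positive probability, which means the sup-norm restriction must be compatible with the path-tube condition used inside Proposition~\ref{smallneighbour} (pick $N$ at least as large as the sup of the construction path $h$ plus the tube radius $\delta$); and (b) the transient bound $M$ on $[0,T_0]$ needs an explicit estimate using the dissipative cubic drift and the bounded Brownian path (or equivalently the bounded Ornstein--Uhlenbeck process on $[0,T_0]$), rather than a bare appeal to ``Gronwall-type bounds.'' These are technical but not conceptual gaps, and you correctly identify the transient window as the sensitive point.
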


\begin{proof}
  Fix $\alpha < 0$, and choose arbitrary $U,V\in\mathbb R^2$, $\omega\in\Omega$. Define
  \[
  \begin{pmatrix}
  x_t\\
  y_t
  \end{pmatrix}
  :=\phi(t,\omega,U)
  \quad\mbox{ and } \quad
  \begin{pmatrix}
  \widehat x_t\\
  \widehat y_t
  \end{pmatrix}
  :=\phi(t,\omega,V)\,.
  \]
  From \eqref{NormalForm}, we derive that
  \begin{align*}
  \frac{\rmd}{\rmd t}
  \begin{pmatrix}
  x_t-\widehat x_t\\
  y_t-\widehat y_t
  \end{pmatrix}
  =&
  \begin{pmatrix}
  \alpha & -\beta\\
  \beta & \alpha
  \end{pmatrix}
  \begin{pmatrix}
  x_t-\widehat x_t\\
  y_t-\widehat y_t
  \end{pmatrix}
  -\\
  &
  \quad(x_t^2+y_t^2)
  \begin{pmatrix}
  a & b\\
  -b & a
  \end{pmatrix}
  \begin{pmatrix}
  x_t\\
  y_t
  \end{pmatrix}
  +(\widehat x_t^2+\widehat y_t^2)
  \begin{pmatrix}
  a & b\\
  -b & a
  \end{pmatrix}
  \begin{pmatrix}
  \widehat x_t\\
  \widehat y_t
  \end{pmatrix}\,.
  \end{align*}
  Therefore,
  \begin{align*}
  \frac{1}{2}\frac{\rmd}{\rmd t}\left\|
  \begin{pmatrix}
  x_t-\widehat x_t\\
  y_t-\widehat y_t
  \end{pmatrix}
  \right\|^2
  &=
  (x_t-\widehat x_t)\frac{\rmd}{\rmd t}(x_t-\widehat x_t)+(y_t-\widehat y_t)\frac{\rmd}{\rmd t}(y_t-\widehat y_t)\\
  &=
  \alpha \left\|
  \begin{pmatrix}
  x_t-\widehat x_t\\
  y_t-\widehat y_t
  \end{pmatrix}
    \right\|^2
  - R(x_t,\widehat x_t,y_t,\widehat y_t)\,,
  \end{align*}
  where
  \[
  R(x_t,y_t,\widehat x_t,\widehat y_t):=
  a\left(r_t^2+\widehat r_t^2- (x_t\widehat x_t+y_t\widehat y_t)(r_t+\widehat r_t)\right)
  +b (x_t\widehat y_t-\widehat x_t y_t)(r_t-\widehat r_t)
  \]
  with $r_t:=x_t^2+y_t^2$ and $\widehat r_t:=\widehat x_t^2+\widehat y_t^2$. To show global uniform attractivity, it is sufficient to establish that $R(x_t,y_t,\widehat x_t,\widehat y_t)\geq 0$. From the inequality $(|xy|+|uv|)^2\leq (x^2+u^2)(y^2+v^2)$, we derive that
  \begin{align*}
   &
  |(x_t\widehat x_t+y_t\widehat y_t)(r_t+\widehat r_t)|+| (x_t\widehat y_t-\widehat x_t y_t)(r_t-\widehat r_t)|\\
  &\leq
  \sqrt{(x_t\widehat x_t+y_t\widehat y_t)^2+  (x_t\widehat y_t-\widehat x_t y_t)^2}
  \sqrt{(r_t+\widehat r_t)^2+ (r_t-\widehat r_t)^2}\\
  &=
  \sqrt{2r_t\widehat r_t (r_t^2+\widehat r_t^2)}\\
  &\leq
  r_t^2+\widehat r_t^2\,.
  \end{align*}
  Together with the fact that $|b|\leq a$, this implies that $R(x_t,y_t,\widehat x_t,\widehat y_t)\geq 0$, which establishes global uniform attractivity for $\alpha<0$.

  We assume now that $\alpha > 0$. Suppose to the contrary that there exists $\delta > 0$ such that
  $$ \lim_{t \to \infty} \textstyle\sup_{x \in B_{\delta}(0)} {\ess \sup}_{\omega \in \Omega} \| \varphi(t, \omega, A(\omega) + x) - A(\theta_t \omega) \| = 0\,.$$
  This implies that there exists an $N \in \mathbb{N}$ such that for all $t > N$, we have
  $$\textstyle\sup_{x \in B_{\delta}(0)} {\ess \sup}_{\omega \in \Omega} \| \varphi(t, \omega, A(\omega) + x) - A(\theta_t \omega) \| < \frac{1}{4}\sqrt{\frac{\alpha}{a}}\,.$$
  Due to Proposition~\ref{smallneighbour}, there exists a positive measure set $E_0 \in \mathcal{F}_{-\infty}^0$ such that $A(\omega) \in B_{\delta/4}(0)$ for all $\omega\in E_0$. Let $\phi(\cdot,x_0)$ denote the solution of the deterministic equation \eqref{NormalForm} for $\sigma=0$ with initial condition $x(0) = x_0$. Then there exists a $T > N$ such that
  \begin{displaymath}
    \textstyle\| \phi(T, (\pm \frac{1}{4}\delta,0)) \| > \frac{1}{2}\sqrt{\frac{\alpha}{a}}\,,
  \end{displaymath}
  and at the same time
  \begin{displaymath}
    \textstyle\| \phi(T, (\frac{1}{4}\delta,0)) - \phi(T, (- \frac{1}{4}\delta,0)) \| > \sqrt{\frac{\alpha}{a}} \,.
  \end{displaymath}
  Recall from the proof of Proposition~\ref{smallneighbour} that $\omega \mapsto \varphi(\cdot,\omega,x)$ is continuous from $C_0^T$ to $C([0,T], \mathbb{R}^2)$ with respect to the supremum norm. This implies that there exists an $\epsilon > 0$ such that for all $\omega \in E_{\epsilon} = \{ \omega  \in \Omega \,:\, \sup_{t \in [0,T]} \| \omega(t) \| < \epsilon \} \in \mathcal{F}_{0}^T$, we obtain
  \begin{displaymath}
    \textstyle\phi(T, (\frac{1}{4}\delta,0)) - \varphi(T,\omega, (\frac{1}{4}\delta,0)) \| < \frac{1}{4}\sqrt{\frac{\alpha}{a}} \quad \mbox{ and } \quad \| \phi(T, (-\frac{1}{4}\delta,0)) - \varphi(T, \omega,(- \frac{1}{4}\delta,0)) \| < \frac{1}{4}\sqrt{\frac{\alpha}{a}}\,.
  \end{displaymath}
  This implies that
  \begin{displaymath}
    \textstyle\| \varphi(T, \omega, (\frac{1}{4}\delta,0)) - \varphi(T, \omega, (- \frac{1}{4}\delta,0)) \| > \frac{1}{2}\sqrt{\frac{\alpha}{a}} \,.
  \end{displaymath}
  Since $E_{\epsilon}$ and $E_{0}$ are independent sets of positive measure, we get that $\mathbb{P}(E) > 0$ where $E = E_{\epsilon} \cap E_{0}$. However, for all $\omega \in E$, we conclude
  \begin{align*}
  &\quad\sup_{x \in B_{\delta}(0)} \| \varphi(t, \omega,A(\omega) + x) - A(\theta_t \omega) \| \\
  &\geq \textstyle\max \left\{ \big\| \varphi(t, \omega, (\frac{1}{4}\delta,0)) - A(\theta_t \omega) \big\|\,, \ \big\| \varphi(t, \omega,(-\frac{1}{4}\delta,0)) - A(\theta_t \omega) \big\| \right\} > \frac{1}{4}\sqrt{\frac{\alpha}{a}}\,,
  \end{align*}
  which contradicts our assumption.
\end{proof}

We show now that this loss of uniform attractivity at the deterministic bifurcation point is associated with a change of sign in the dichotomy spectrum.

\begin{theorem} \label{Dichotomysmallshear}
  Consider the stochastic differential equation \eqref{NormalForm} such that there exists a unique attracting random equilibrium $A:\Omega\to\R^2$ (see Proposition~\ref{Pullback_Attractor} and Theorem C). Let $\Phi(t, \omega) := \rmD \varphi (t, \omega, A(\omega))$ denote the linearized random system along the random equilibrium. Then for $\left| b \right| < a$ and $\alpha \in \mathbb{R}$ small enough such that the random equilibrium $A:\Omega\to\R^2$ exists, the dichotomy spectrum $\Sigma$ of $\Phi$ is given by
  \begin{equation*}
  \Sigma = [- \infty, \alpha] \,.
  \end{equation*}
\end{theorem}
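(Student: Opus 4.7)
My plan is to verify the two inclusions $\Sigma \subseteq [-\infty,\alpha]$ and $[-\infty,\alpha] \subseteq \Sigma$ separately. For the first, I exhibit an exponential dichotomy for every $\gamma > \alpha$ with projector $P_\gamma \equiv \Id$. Since $|b| < a$, the coefficient $c := 2a - \sqrt{a^2+b^2}$ is strictly positive; combining Lemma~\ref{Inequality_Df} with \eqref{UpperEstimate} in Proposition~\ref{Singularity}(i) (applied along the random equilibrium) yields
\[
  \|\Phi(t,\omega)\| \leq \exp\!\Bigl(\alpha t - c \int_0^t \|A(\theta_s\omega)\|^2 \, \rmd s\Bigr) \leq e^{\alpha t}
\]
for all $t\geq 0$ and almost every $\omega$. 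This is the forward dichotomy estimate with $K=1$ and constant $\gamma-\alpha$; the backward estimate is vacuous since $\Id - P_\gamma = 0$. In particular $\infty \notin \Sigma$.

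For the reverse inclusion, I assume an exponential dichotomy with growth rate $\gamma$, projector $P_\gamma$, and constants $K,\alpha_0$ exists, and derive a contradiction in two cases. Cocycle invariance of $P_\gamma$ together with ergodicity forces either $P_\gamma \equiv 0$ or $P_\gamma \neq 0$ almost surely. In the latter case with $\gamma \leq \alpha$, I apply Proposition~\ref{smallneighbour} with $y = 0$ to produce, for arbitrarily large $T$ and small $\epsilon$, a positive-measure set $E$ on which $A(\theta_s\omega) \in B_\epsilon(0)$ for all $s \in [0,T]$. Since $\rmD f(0)$ is the conformal matrix with eigenvalues $\alpha\pm i\beta$, one has $\|e^{t\,\rmD f(0)}v\| = e^{\alpha t}\|v\|$; continuous dependence of the variational flow on the driving trajectory then yields $\|\Phi(T,\omega)v\| \geq \tfrac12 e^{\alpha T}\|v\|$ uniformly in $v$ on $E$ (for $\epsilon$ small depending on $T$). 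Testing this against any nonzero $v \in \mathrm{Im}\,P_\gamma(\omega)$ and invoking the dichotomy bound forces $\tfrac12 e^{\alpha T} \leq K e^{(\gamma-\alpha_0)T}$, whence $\alpha \leq \gamma - \alpha_0 < \gamma \leq \alpha$ for $T$ large, a contradiction.

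In the other case, $P_\gamma \equiv 0$, the dichotomy condition becomes $\|\Phi(s,\theta_{-s}\omega)^{-1}\| \leq K e^{-(\gamma+\alpha_0)s}$ for all $s \geq 0$ almost surely. By Liouville's formula and the explicit expression for $\mathrm{tr}\,\rmD f$ already used in Proposition~\ref{Singularity}(ii),
\[
  \det\Phi(s,\omega) = \exp\!\Bigl(\int_0^s \bigl(2\alpha - 4a\|A(\theta_r\omega)\|^2\bigr) \rmd r\Bigr),
\]
and in two dimensions one has $\|\Phi(s,\omega)^{-1}\| \geq 1/\sqrt{\det \Phi(s,\omega)}$. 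Proposition~\ref{smallneighbour}, now with $y$ chosen so that $\|y\|^2$ is much larger than a target $M$ and $\epsilon$ small, supplies a positive-measure set on which $\|A(\theta_r\omega)\|^2 \geq M$ throughout $[0,s]$, so $\|\Phi(s,\omega)^{-1}\| \geq \exp\!\bigl((2aM-\alpha)s\bigr)$. Shifting this set by $\theta_s$ preserves positive measure and therefore produces a positive-measure set violating the prescribed bound once $M$ is taken large enough (any fixed $s>0$ suffices). Hence no $\gamma \in \R$ admits a dichotomy with $P_\gamma = 0$, which both rules out this subcase for $\gamma \leq \alpha$ and shows $-\infty \in \Sigma$. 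Combining the two cases gives $[-\infty,\alpha] \subseteq \Sigma$, completing the equality.

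The principal technical obstacle is the $P_\gamma \equiv 0$ case: instead of arguing from a direct lower bound on the inverse cocycle, the proof exploits the two-dimensional identity $\|\Phi^{-1}\| \geq 1/\sqrt{\det\Phi}$ to convert an upper bound on $\det\Phi$—made arbitrarily small by choosing sample paths for which $\|A\|$ stays large on a positive-length interval—into the required lower bound on $\|\Phi^{-1}\|$. The flexibility of Proposition~\ref{smallneighbour} to target \emph{any} $y \in \R^2$, not merely neighbourhoods of the origin, is what makes this last step possible.
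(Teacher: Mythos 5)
Your proof is correct, and the overall architecture coincides with the paper's: establish $\Sigma\subseteq[-\infty,\alpha]$ via the pointwise upper bound on $\lambda^+$ from Lemma~\ref{Inequality_Df}, then exclude dichotomies for $\gamma\le\alpha$ (and for $\gamma=-\infty$) by a case analysis on the invariant projector, each case exploiting Proposition~\ref{smallneighbour} to keep $A(\theta_s\omega)$ in a chosen small ball for an arbitrarily long time window. Where you genuinely diverge is in the two key estimates. In the case $P_\gamma\neq 0$, the paper avoids any continuity argument entirely: it uses the explicit pointwise \emph{lower} bound $\lambda^-(x)\ge\alpha-4a\|x\|^2$, which immediately yields $\|\Phi(T,\omega)v\|\geq e^{T(\alpha-4a\epsilon')}\|v\|$ for all $v$ once $A(\theta_s\omega)\in B_{\sqrt{\epsilon'}}(0)$; your route via $\|e^{tDf(0)}v\|=e^{\alpha t}\|v\|$ and continuous dependence of the variational flow also works, but it requires making the Gr\"onwall-type estimate $\|\Phi(T,\omega)-e^{TDf(0)}\|\le\tfrac12 e^{\alpha T}$ precise, with $\epsilon$ depending on $T$ (in particular when $\alpha<0$, where the target threshold $\tfrac12 e^{\alpha T}$ is exponentially small), so it is workable but more delicate than needed. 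In the case $P_\gamma\equiv 0$, the paper combines the $\lambda^+$ upper bound with the dichotomy-induced lower bound $\|\Phi(t,\omega)\|\geq K^{-1}e^{(\gamma+\alpha_0)t}$ to derive an upper bound on $\int_0^t\|A(\theta_s\omega)\|^2\,\rmd s$ and contradict it; your Liouville/determinant argument ($\det\Phi(s,\omega)=\exp\!\int_0^s(2\alpha-4a\|A(\theta_r\omega)\|^2)\,\rmd r$ and the two-dimensional inequality $\|\Phi^{-1}\|\ge(\det\Phi)^{-1/2}$) is a clean alternative that, notably, does not even use $|b|<a$ in this subcase. One small thing you handle more explicitly than the paper does: the paper's contradiction argument is phrased for $\gamma\in(-\infty,\alpha]$ and it is left implicit why $-\infty\in\Sigma$; your observation that the $P_\gamma\equiv0$ case is ruled out for \emph{every} real $\gamma$, which is exactly what the definition of growth rate $-\infty$ requires, closes that gap cleanly.
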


\begin{proof}
  Recall from Proposition~\ref{Singularity} that we have
  $$\| \rmD\varphi(t, \omega, x)\| \leq  \exp \left( \int_0^t \lambda^+( \varphi(s,\omega,x)) \rmd s \right)\,.$$
  Since Lemma~\ref{Inequality_Df} implies that $\lambda^+ (x) \leq \alpha - (a - \left|b \right|) \|x\|^2$, we have
  \begin{equation} \label{fromabove}
  \| \Phi(t, \omega) \| \leq \exp \left( \int_0^t  \big(\alpha - (a - \left|b \right|) \|A(\theta_s \omega))\|^2 \big)\,\rmd s \right)\,.
  \end{equation}
  Similarly, with $\lambda^-(x):=\min_{\|r\|=1} \langle Df(x)r,r\rangle$, we have
  $$ \| \rmD\varphi(t, \omega, x)\| \geq  \exp \left( \int_0^t \lambda^-( \varphi(s,\omega,x)) \rmd s \right)\,.$$
  It is easy to see that $\lambda^- (x) \geq \alpha - 4 a \|x\|^2$, which implies
  \begin{equation} \label{frombelow}
  \| \Phi(t, \omega)  \| \geq \exp \left( \int_0^t (\alpha - 4 a \| A(\theta_s \omega))\|^2) \rmd s \right)\,.
  \end{equation}
  From \eqref{fromabove} we can deduce immediately that for almost all $\omega \in \Omega$, we have
  \begin{equation*}
  \| \Phi(t, \omega) \| \leq e^{\alpha \left| t \right|}  \fa t \in \mathbb{R}\,.
  \end{equation*}
  This implies that $\Sigma \subset (-\infty, \alpha]$.

  We now show that $(-\infty, \alpha] \subset \Sigma$. Choose $\gamma \in (-\infty, \alpha]$, and suppose to the contrary that $\Phi$ admits an exponential dichotomy with growth rate $\gamma$ with an invariant projector $P_{\gamma}$ and constants $K, \epsilon > 0$.
We consider the following three cases (note that the rank of the invariant projector does not depend on $\omega$, see \cite{cdlr16}):
  \begin{enumerate}
    \item[(i)]
      $P_{\gamma} \equiv \id$. This means that for almost all $\omega \in \Omega$,
      \begin{displaymath}
        \|\Phi(t, \omega) \|\leq K e^{(\gamma - \epsilon)t} \fa t \geq 0 \,.
      \end{displaymath}
      Fix $T > 0$ such that $e^{\frac{1}{5}\epsilon T} > K$. According to Proposition~\ref{smallneighbour}, there exists a positive measure set $E$ such that for all $\omega \in E$ and $s \in [0,T]$, we have $A(\theta_s \omega) \in B_ {\sqrt{\epsilon/(5a)}}(0)$. We derive from \eqref{frombelow} that for such $\omega\in E$, we have
      \begin{displaymath}
        \| \Phi(T, \omega) \| \geq e^{T(\alpha - \frac{4 }{5}\epsilon )} \geq K e^{(\gamma - \epsilon)T} \,.
      \end{displaymath}
      This contradicts the assumption.
    \item[(ii)]
    $\rank P_{\gamma} \equiv 1$. The argument is the same as in the previous case, since our estimates do not depend on the tangent vector $v$, but hold for the norm $\|\Phi(t, \omega) \|$.
    \item[(iii)]
    $P_{\gamma} \equiv 0$. This means that for almost all $\omega \in \Omega$, we have
  $$ \| \Phi(t, \omega) \|  \geq \frac{1}{K} e^{(\gamma + \epsilon)t} \fa t \geq 0 \,.$$
  Together with \eqref{fromabove}, this implies that
  $$ \frac{\ln K + (\alpha - \epsilon - \gamma)t}{a - \left| b \right|} \geq \int_0^t \|A(\theta_s \omega)\|^2 \rmd s\,. $$
  Choose some $T > 1$ and $y \in \mathbb{R}^2$ such that
  $$ \|y\|^2 > 4 \max \left\{ \frac{\ln K }{a - \left| b \right|}, \frac{\alpha - \epsilon - \gamma}{a - \left| b \right|}  \right\}\,.$$
  Take $\delta < \frac{\|y\|}{2}$. Then by Proposition~\ref{smallneighbour}, there exists a set $E\in \mathcal{F}_{- \infty}^T$ such that
  $$ A(\theta_s \omega) \in B_{\delta}(y) \fa s \in [0,T] \mbox{ and } \omega \in E\,.$$
  This implies
  $$ \int_0^t \|A(\theta_s \omega)\|^2 \rmd s > T \frac{\|y\|^2}{4} > \frac{\ln K + (\alpha - \epsilon - \gamma)T}{a - \left| b \right|} \,, $$
  which is a contradiction.
  \end{enumerate}
  This finishes the proof of this theorem.
\end{proof}

We demonstrate now that the change of sign in the dichotomy spectrum is mirrored by finite-time properties of the system. To see this, consider a compact time interval $[0,T]$ and the corresponding \emph{finite-time top Lyapunov exponents} associated with the attractive random equilibrium $A:\Omega\to \R^2$, given by
\begin{displaymath}
  \lambda^{T, \omega} := \sup_{\|v\|=1} \frac{1}{T} \ln \| \Phi(T, \omega)v \| \fa \omega\in\Omega\,.
\end{displaymath}
From Proposition~\ref{Pullback_Attractor}~(iii), we obviously have  $\lambda_{\mathrm{top}} = \lim_{T \to \infty} \lambda^{T, \omega}$ almost surely, where $\lambda_{\mathrm{top}}$ is the top Lyapunov exponent of \eqref{NormalForm}.
\begin{proposition} \label{Finitetime}
Consider the stochastic differential equation \eqref{NormalForm} with $\left|b \right| < a$ and $\alpha \in \mathbb{R}$ such that there exists a unique attractive random equilibrium $A:\Omega\to\R^2$. The following statements hold.
\begin{enumerate}[(i)]
\item For $ \alpha < 0$, we have $\lambda^{T, \omega} \leq \alpha < 0$ for all $\omega  \in \Omega$, which means that the random attractor $A:\Omega\to\R^2$ is \emph{finite-time attractive}.
\item For $ \alpha > 0$, we have $\mathbb{P} \big( \omega \in \Omega \,:\, \lambda^{T, \omega} > 0 \big) > 0$, which means that the random attractor $A:\Omega\to\R^2$ is not finite-time attractive.
\end{enumerate}
\end{proposition}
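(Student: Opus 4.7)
The plan is to read off part (i) directly from the pathwise contraction estimate in Theorem~\ref{Uniattract}, and to deduce part (ii) from the lower bound on $\|\Phi(T,\omega)\|$ that already appeared as (\ref{frombelow}) in the proof of Theorem~\ref{Dichotomysmallshear}, combined with the localisation of the random equilibrium furnished by Proposition~\ref{smallneighbour}.

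For (i), the hypothesis $|b|<a$ implies $|b|\le a$, so Theorem~\ref{Uniattract} gives the pathwise Lipschitz contraction
\[
\|\varphi(t,\omega,U)-\varphi(t,\omega,V)\|\le e^{\alpha t}\|U-V\|\fa \omega\in\Omega,\ U,V\in\R^{2},\ t\ge 0.
\]
Fix $\omega\in\Omega$, a unit vector $v\in\R^{2}$, and apply this inequality with $U=A(\omega)+\epsilon v$, $V=A(\omega)$. Dividing by $\epsilon$ and letting $\epsilon\to 0$ (using differentiability of the cocycle in the initial condition together with continuity of the norm) transfers the bound to the linearisation, giving $\|\Phi(T,\omega)v\|\le e^{\alpha T}$. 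Taking the supremum over $\|v\|=1$ and then $\tfrac{1}{T}\ln(\cdot)$ yields $\lambda^{T,\omega}\le\alpha<0$ pointwise on $\Omega$.

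For (ii), the estimate (\ref{frombelow}) reads
\[
\|\Phi(T,\omega)\|\ \ge\ \exp\!\Bigl(\alpha T-4a\int_{0}^{T}\|A(\theta_{s}\omega)\|^{2}\,\rmd s\Bigr),
\]
which passes to the finite-time exponent as
\[
\lambda^{T,\omega}\ \ge\ \alpha-\frac{4a}{T}\int_{0}^{T}\|A(\theta_{s}\omega)\|^{2}\,\rmd s.
\]
Since $\alpha>0$, one may choose $\epsilon>0$ so small that $4a\epsilon^{2}<\alpha$. Applying Proposition~\ref{smallneighbour} with $y=0$, horizon $T$ and radius $\epsilon$ produces a set $E\in\mathcal{F}_{-\infty}^{T}$ of positive probability on which $\|A(\theta_{s}\omega)\|<\epsilon$ for every $s\in[0,T]$. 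On $E$ the integral is bounded by $T\epsilon^{2}$, so $\lambda^{T,\omega}>\alpha-4a\epsilon^{2}>0$, proving the claim.

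The substantive analytical work has already been carried out in Theorems~\ref{Uniattract} and~\ref{Dichotomysmallshear} and in Proposition~\ref{smallneighbour}; the only real decision in this proof is matching scales, namely choosing $\epsilon$ strictly inside $(0,\sqrt{\alpha/(4a)})$ so that the positive drift $\alpha$ of the lower bound dominates the quadratic loss $4a\|A\|^{2}$ uniformly on $[0,T]$. This step uses $\alpha>0$ in an essential way and is the only place where the sign of $\alpha$ distinguishes (i) from (ii).
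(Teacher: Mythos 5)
Your argument is correct, and part (ii) is essentially the paper's own proof: both invoke the lower bound (\ref{frombelow}) on $\|\Phi(T,\omega)\|$ and Proposition~\ref{smallneighbour} to confine $A(\theta_s\omega)$ near the origin on $[0,T]$; you pick $\epsilon<\sqrt{\alpha/(4a)}$ whereas the paper fixes $\epsilon=\sqrt{\alpha/(5a)}$, a cosmetic difference. For part (i), you take a slightly different route: you invoke the nonlinear pathwise contraction of Theorem~\ref{Uniattract}, $\|\varphi(t,\omega,U)-\varphi(t,\omega,V)\|\le e^{\alpha t}\|U-V\|$, and transfer it to $\|\Phi(T,\omega)\|$ by differentiating at $A(\omega)$, whereas the paper reads $\lambda^{T,\omega}\le\alpha$ directly off the already-established linearised upper bound (\ref{fromabove}), $\|\Phi(t,\omega)\|\le\exp\bigl(\int_0^t(\alpha-(a-|b|)\|A(\theta_s\omega)\|^2)\,\rmd s\bigr)$. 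Both rest on the same underlying estimate of $\rmD f$ under $|b|\le a$; yours is a valid detour but the paper's one-line appeal to (\ref{fromabove}) is the more direct and arguably cleaner argument, since it stays entirely at the level of the linearisation and requires no limiting step.
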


\begin{proof}
(i) Recall from \eqref{fromabove} that
$$
\| \Phi(t, \omega) \| \leq \exp \left( \int_0^t \big( \alpha - (a - \left|b \right|) \|A(\theta_s \omega))\|^2 \big)\,\rmd s \right)\,,
$$
which implies that
\begin{equation*}
\lambda^{T, \omega} \leq \frac{1}{T} \int_0^T \big( \alpha - (a - \left|b \right|) \|A(\theta_s \omega))\|^2 \big)\,\rmd s \leq \alpha < 0\,.
\end{equation*}

(ii) Recall from \eqref{frombelow} that
\begin{displaymath}
  \| \Phi(t, \omega)  \| \geq \exp \left( \int_0^t \big(\alpha - 4 a \| A(\theta_s \omega))\|^2 \big)\,\rmd s \right)\,.
\end{displaymath}
Choose $\epsilon := \sqrt{\frac{\alpha}{5 a}} > 0$. According to Proposition~\ref{smallneighbour}, there exists a set $E \in \mathcal{F}_{-\infty}^T$ of positive measure such that $A(\theta_s \omega) \in B_{\epsilon}(0)$ for all $s \in [0,T]$ and $\omega\in E$. Then
\begin{equation*}
\lambda^{T, \omega} \geq \frac{1}{T} \int_0^T \big(\alpha - 4 a \| A(\theta_s \omega))\|^2 \big)\,\rmd s \geq \alpha - \frac{4 \alpha}{5} = \frac{ \alpha}{5} > 0 \fa \omega\in E\,.
\end{equation*}
This shows the claim.
\end{proof}

\begin{proof}[Proof of Theorem E]
  The claims follow from Theorem~\ref{Uniattract}, Theorem~\ref{Dichotomysmallshear} and Proposition~\ref{Finitetime}.
\end{proof}
The proofs of Theorem~\ref{Dichotomysmallshear} and Proposition~\ref{Finitetime} explain in detail how the change of finite-time attractivity is connected to the loss of hyperbolicity in the dichotomy spectrum. Due to \cite[Theorem~4.5]{cdlr16}, we obtain
\begin{equation}\label{rel1}
  \lim_{T \to \infty} \esssup_{ \omega \in \Omega} \lambda^{T, \omega} = \lim_{T \to \infty} \esssup_{ \omega \in \Omega} \sup_{\|v\|=1} \frac{1}{T} \ln \| \Phi(T, \omega)v \| = \sup \Sigma\,.
\end{equation}
A similar statement holds for the infimum of the dichotomy spectrum. This means that the finite-time Lyapunov exponents are, at least asymptotically, supported on the dichotomy spectrum, and having positive values in the spectrum implies that, at least asymptotically, we can observe positive finite-time Lyapunov exponents.

\subsection{Shear intensity as bifurcation parameter}

We now do not assume the existence of an attractive random equilibrium, and we aim at proving Theorem F in this subsection. We first show a statement that corresponds to Proposition~\ref{smallneighbour} in this more general context.

\begin{proposition} \label{smallneighbour2}
Let $(\theta, \varphi)$ be the random dynamical system generated by \eqref{NormalForm}, and let $x, y \in \mathbb{R}^2$, $\epsilon > 0$ and $T > 0$. Then for any $t_0 \in (0, T]$, there exists a set $E \in \mathcal{F}$ with $\mathbb{P}(E)>0$ such that
\begin{displaymath}
  \varphi(s, \omega,x) \in B_{\epsilon}(y) \fa s \in [t_0,T] \mbox{ and } \omega \in E\,.
\end{displaymath}
\end{proposition}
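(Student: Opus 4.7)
My plan is to generalise the support-type argument used in the proof of Proposition~\ref{smallneighbour} to the case of a deterministic initial condition $x$. The idea is to exhibit an explicit continuous control path $h \in C_0^T := \{g \in C([0,T],\R^2) : g(0) = 0\}$ whose associated controlled ODE steers $x$ into $y$ by time $t_0$ and holds it there throughout $[t_0, T]$, and then to exploit continuity of the solution map $g \mapsto \varphi(\cdot, g, x)$ in the sup norm together with the full-support property of Wiener measure on $C_0([0,T], \R^2)$.

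First I would define the target path
\[
z(s) := \begin{cases} x + (s/t_0)(y - x), & s \in [0, t_0], \\ y, & s \in [t_0, T], \end{cases}
\]
and then set
\[
h(s) := \frac{1}{\sigma} \int_0^s \bigl( \dot z(u) - f(z(u)) \bigr)\, du.
\]
Since $\dot z$ is bounded and $f$ is continuous along $z$, the map $h$ is continuous with $h(0) = 0$, hence $h \in C_0^T$. By construction $z$ satisfies $z(s) = x + \int_0^s f(z(u))\, du + \sigma h(s)$, so $\varphi(s, h, x) = z(s)$ for all $s \in [0, T]$ in the controlled sense used in Proposition~\ref{smallneighbour}.

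Next, continuity of $g \mapsto \varphi(\cdot, g, x)$ from $(C_0^T, \|\cdot\|_\infty)$ into $(C([0,T], \R^2), \|\cdot\|_\infty)$ gives a $\delta > 0$ such that $\|g - h\|_\infty < \delta$ implies $\sup_{s \in [0,T]} \|\varphi(s, g, x) - z(s)\| < \epsilon$. The set
\[
E := \bigl\{ \omega \in \Omega : \sup_{s \in [0, T]} \|\omega(s) - h(s)\| < \delta \bigr\}
\]
lies in $\mathcal{F}_0^T \subset \mathcal{F}$, and it has positive Wiener measure because the restriction of Wiener measure to $C_0([0,T], \R^2)$ has full topological support. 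For $\omega \in E$ and $s \in [t_0, T]$, the identity $z(s) = y$ yields $\|\varphi(s, \omega, x) - y\| < \epsilon$, which is the desired conclusion.

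The main obstacle is not the construction of the steering path $h$, which is elementary, but rather the rigorous interpretation of $\varphi(\cdot, g, x)$ for a non-Brownian driving path $g$ and the corresponding sup-norm continuity in $g$. Both are supplied by the Ornstein--Uhlenbeck conjugation of Section~\ref{Generation}, which reduces \eqref{NormalForm} to a random differential equation whose flow depends continuously on the forcing path; once this is in place, the only difference with Proposition~\ref{smallneighbour} is that the confinement in $B_\epsilon(y)$ must now hold uniformly over the window $[t_0, T]$ rather than at a single time, which is automatic from the sup-norm estimate.
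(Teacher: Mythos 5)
Your proof is correct, and it streamlines the paper's argument in a pleasant way. The paper constructs the control path only on $[0,t_0]$ so as to steer $x$ into $B_{\epsilon/2}(y)$ at time $t_0$, and then introduces a second positive-measure event $E_2$, independent of $E_1$ by the Markov property, to keep the trajectory in $B_{\epsilon}(y)$ over $[t_0,T]$; that second step implicitly requires the control to work uniformly over all possible starting points $\varphi(t_0,\omega,x) \in B_{\epsilon/2}(y)$, a point the paper passes over with ``Similar to this argument''. You instead build a single control path $h$ on all of $[0,T]$ (linear on $[0,t_0]$, then holding at $y$), apply the sup-norm continuity of $g \mapsto \varphi(\cdot,g,x)$ once on $[0,T]$, and invoke full support of Wiener measure for a single event $E$; this removes both the two-stage splitting and the Markov/independence argument, and it bypasses the uniformity issue entirely since there is only one fixed initial condition $x$. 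The one part I would tighten: rather than routing the continuity of $g \mapsto \varphi(\cdot,g,x)$ through the Ornstein--Uhlenbeck conjugation (which is set up for the stationary noise process, not for arbitrary deterministic control paths), it is more direct to argue from the pathwise integral equation $\varphi(t,g,x) = x + \int_0^t f(\varphi(s,g,x))\,ds + \sigma g(t)$ together with a Gronwall estimate on a neighbourhood of $h$ in $C_0^T$ where the solution stays in a fixed compact set (existence and local Lipschitz continuity in $g$ are automatic there because the reference trajectory $z$ is bounded); this is also what the paper is implicitly relying on via \cite[Proposition~3.10]{fgs16}. With that adjustment the proof is complete and, in my view, cleaner than the original.
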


\begin{proof}
Similarly as in the proof of \cite[Proposition 3.10]{fgs16}, fix $t_0 \in (0, T]$ and define
$$ \psi(t) := x + \frac{t}{t_0}(y-x) \fa t \in[0,t_0]\,,$$
and
$$ h(t) := \frac{1}{\sigma} \left( \psi(t) - x - \int_0^t f(\psi(s))\, \rmd s \right) \fa  \ t \in[0,T]\,,$$
where $f$ denotes the vector field of the drift in \eqref{NormalForm}. As in the proof of Proposition~\ref{smallneighbour}, we write $\varphi(t,g,z)$ for the solution of \eqref{NormalForm} with initial condition $z$ and path $g\in C_0^T$.
We can infer that $\varphi(t,h,x) = \psi(t)$ for all $ t \in [0,t_0]$, and in particular $\varphi(t_0,h,x) = y$. Recall that the map $g \mapsto \varphi(\cdot,g,z)$ is continuous from $C_0^T$ to $C([0,t_0], \mathbb{R}^2)$ with respect to the supremum norm $\| \cdot \|_{\infty}$. This implies that there exists a $\delta > 0$ such that for all $g \in C_{\delta} := \big\{ b \in C_0^T : \| b- h \| \leq \delta \big\}$, we have
$$ \| \varphi(t,g, x) - \varphi(t,h, x) \| < \tfrac{1}{2}\epsilon  \fa   t \in [0,t_0]\,.$$
Hence, we have established that there is a positive measure set $E_1 := \big\{\omega\in\Omega: \omega|_{[0,t_0]} \in C_{\delta} \big\}$ such that for all $\omega \in E_1$, we have
$\varphi(t_0, \omega,x) \in B_{\epsilon/2} (y)$.

Similar to this argument, one can construct a set $E_2$ of positive measure that is independent from $E_1$ (by the Markov property) such that for all $\omega \in E := E_1 \cap E_2$, we have
$$ \varphi(t, \omega,x) \in B_{\epsilon} (y) \fa t \in [t_0, T]\,.$$
This finishes the proof of this proposition.
\end{proof}

\begin{proof}[Proof of Theorem F]
  For $(\omega,z) \in \Omega \times \mathbb{R}^2$ and $\alpha \in \mathbb{R}$, the linear random dynamical system $t\mapsto \Phi(t, \omega,z)$ is solution of the variational equation
	\begin{equation*}
	\frac{\rmd}{\rmd t} \Phi(t, \omega, z) = \rmD f( \varphi(t,\omega,z)) \Phi(t, \omega, z), \quad \mbox{where } \Phi(0,\omega,z) = \Id\,.
	\end{equation*}
  Define $s_t(\omega,z,v) := \frac{\Phi(t, \omega, z) v}{\| \Phi(t, \omega, z) v\|}$ and observe that for $v\in\R^2\setminus\{0\}$,
	\begin{align*}
	\frac{\rmd}{\rmd t} \| \Phi(t, \omega, z)v \|^2 &= 2 \left\langle \rmD f( \varphi(t,\omega,z)) \Phi(t, \omega, z) v, \Phi(t, \omega, z)v \right\rangle \\
	& = 2 \left\langle \rmD f( \varphi_t(\omega,z))s_t(\omega,z,v) , s_t(\omega,z,v) \right\rangle \| \Phi(t, \omega, z)v \|^2\,.
	\end{align*}
Let $\mu >0$, and let $z' = (w,w) \in \mathbb{R}^2$ be such that $ \frac{b-2a}{2} \|z'\|^2 = (b-2a) w^2 \geq \mu$ and $w >1$. Note that
	\begin{equation*}
	\rmD f(x,y) = \begin{pmatrix}
	\alpha - a y^2 - 3a x^2 -2byx & -\beta -2axy -bx^2-3by^2\\
	\beta- 2axy + by^2 + 3bx^2&  \alpha - ax^2 - 3ay^2 + 2byx
	\end{pmatrix}\,.
	\end{equation*}
With $\tilde r= (0,1)$, we get $$\langle Df(z')\tilde r,\tilde r\rangle = \alpha + 2(b -2 a)w^2 \geq \alpha + 2\mu \,.  $$
Let
$$\epsilon = \min \left\{ 1, \frac{1}{16} \frac{b - 2a}{b w}, \frac{\sqrt{b -2a}}{4 a} \right\} \quad \mbox{and} \quad \delta = \frac{1}{8} \frac{b - 2a}{4b} \,.$$
Then by Proposition~\ref{smallneighbour2}, there is a positive measure set $E_1\subset \Omega$ such that for all $\omega \in E_1$
$$ \varphi(t, \omega,z') \in B_{\epsilon}(z') \fa t \in [0,1]\,.$$
This implies that the coefficients of $\rmD f( \varphi(t,\omega,z'))$ are bounded uniformly in $\omega\in E_1$ for $t\in [0,1]$. Because $\Phi$ is continuous, there is a $T  \in (0,1]$ such that
$$ \| s_t(\omega,z',\tilde r) - \tilde r \| = \left \| \frac{\Phi(t, \omega, z') \tilde r}{\| \Phi(t, \omega, z') \tilde r\|} - \tilde r \right\| <  \delta \fa t \in [0,T] \mbox{ and }\omega \in E_1$$
Note that we obtain for any $r \in \mathbb{R}^2$ with $\|r\|=1$
\begin{align*}
\langle Df(x,y)r,r\rangle &= r_1^2(\alpha - ay^2 - 3ax^2) + r_1 r_2 (-\beta - 2axy) + r_1 r_2 (\beta - 2axy) + r_2^2 (\alpha - ax^2 - 3ay^2) \\
&\quad -2byx r_1^2 + 2byx r_2^2 + r_1 r_2(2bx^2-2by^2)\\
&= \alpha - a(x^2 +y^2)+ 2b(r_1r_2x^2-r_1r_2y^2+yx(r_2^2-r_1^2)) - 2 a(r_1 x + r_2 y)^2 \,.
\end{align*}
This means that for all $t \in (0,T]$ and $\omega \in E_1$, we have by the choice of $\epsilon$ and $\delta$ above that
\begin{align*}
&\quad \left \langle \rmD f( \varphi_t(\omega,z))s_t(\omega,z,\tilde r) , s_t(\omega,z,\tilde r) \right\rangle \\
&\geq \alpha - 2a(w + \epsilon)^2 + 2b(w - \epsilon)^2(1 - 2 \delta) - 2b \delta [ (w + \epsilon)^2 - (w - \epsilon)^2] - 2 a (w + \epsilon)^2\\
& = \alpha +  (b-2a)w^2 + \big((b-2a)w^2  - 4a (2 w \epsilon + \epsilon^2) - 4 b w \epsilon - \delta 4b(w - \epsilon)^2 - 2 \delta 4 b w \epsilon\big) \\&\geq \alpha + \mu\,.
\end{align*}
Hence, we get that for all $\omega \in E_1$ and $t \in (0,T]$, the finite-time top Lyapunov exponent of trajectories starting in $z'$ satisfies
$$\lambda^{t,\omega, z'} := \sup_{\|v\| =1} \frac{1}{t} \ln \| \Phi(t, \omega, z')v\|\geq \alpha + \mu\,.$$
Since $\mu > 0$ was arbitrary, we obtain with positive probability arbitrarily large finite-time Lyapunov exponents when starting in $z'$.

We now show that for any $z\in\R^2$ and $t_0\in (0,T]$, the finite-time top Lyapunov exponent $\lambda^{t,\omega, z}$, $t\in[t_0,T]$, can be arbitrarily large for $\omega$ from a set of positive measure.
By Proposition~\ref{smallneighbour2}, there exists a set $E_2 \in \mathcal{F}$ with $\mathbb{P}(E_2)>0$ such that
$$ \varphi(s, \omega,z) \in B_{\epsilon}(z') \fa s \in [t_0,T] \mbox{ and } \omega \in E_2\,,$$
where the values of $\varphi(t,\omega,z)$, $t\in [0,t_0]$, stay close to the line between $z$ and $z'$ (see proof of Proposition~\ref{smallneighbour2}).
Since $t_0$ can be chosen arbitrarily small and the solutions stay in a compact set for $t\in[0,t_0]$, we obtain with similar arguments as before that with positive probability there are arbitrarily large finite-time Lyapunov exponents.

Let $\mu^- < 0$. Then by choosing $z''=(w,-w)$, we obtain with similar arguments as above that for some $T \in (0,1]$
$$\inf_{\|v\| =1} \frac{1}{t} \ln \| \Phi(t, \omega, z'')v\| \leq \alpha + \mu^- \fa t \in [0,T] \mbox{ and } \omega \mbox{ from a set of positive probability}\,.$$
By using Proposition~\ref{smallneighbour2} again, we can then deduce that with positive probability, there are arbitrarily small finite-time Lyapunov exponents for any initial conditions.
\end{proof}

\section*{Acknowledgments} The authors would like to thank Alexis Arnaudon, Darryl Holm, Nikolas N{\"u}sken, Grigorios Pavliotis and Sebastian Wieczorek for useful discussions. Maximilian Engel was supported by a Roth Scholarship from the Department of Mathematics at Imperial College London. Jeroen S.W.~Lamb acknowledges the support by Nizhny Novgorod University through the grant RNF 14-41-00044, and Martin Rasmussen was supported by an EPSRC Career Acceleration Fellowship EP/I004165/1. This research has also been supported by EU Marie-Curie IRSES Brazilian-European Partnership in Dynamical Systems (FP7-PEOPLE-2012-IRSES 318999 BREUDS) and EU Marie-Sk\l odowska-Curie ITN Critical Transitions in Complex Systems (H2020-MSCA-2014-ITN 643073 CRITICS).


\bibliographystyle{plain}

\bibliography{mybibfile}

\appendix

\section*{Appendix}

\section{Lyapunov spectrum}
\label{LyapSpec}


A random dynamical system $(\theta,\varphi)$ is called \emph{linear} if the map $\varphi(t,\omega):\mathbb R^d\to\mathbb R^d$, $x\mapsto \varphi(t,\omega,x)$, is linear for any $(t,\omega)\in \R\times \Omega$. Define $\Phi:\R\times \Omega\to \R^{d\times d}$ by $\Phi(t,\omega)x:= \varphi(t,\omega,x)$. Suppose that $\Phi$ satisfies the integrability condition
\[
  \sup_{0 \leq t \leq 1} \ln^+ \| \Phi(t, \omega) \| \in L^1(\mathbb P)\,,
\]
where $\ln^+(x):= \max\{\ln(x),0\}$. Then the Multiplicative Ergodic Theorem \cite[Theorem 3.4.1, Theorem 4.2.6]{a98} guarantees the existence of a $\theta$-forward invariant set $\widehat \Omega \subset \Omega$ with $\mathbb P (\widehat \Omega) = 1$, the Lyapunov exponents $\lambda_1 > \dots > \lambda_p$, and an invariant measurable filtration
\begin{displaymath}
\mathbb R^d = V_1(\omega) \supsetneq V_2(\omega)\supsetneq \dots \supsetneq V_p(\omega) \supsetneq V_{p+1}(\omega)= \{0\}\,,
\end{displaymath}
such that  for all $0 \neq x \in \mathbb{R}^d$, the \emph{Lyapunov exponent} $\lambda(\omega, x)$, defined by
\begin{equation*}
  \lambda(\omega, x) = \lim_{t \to \infty} \frac{1}{t} \ln \| \Phi(t, \omega) x \|
\end{equation*}
exists, and we have
\begin{equation*}
  \lambda (\omega, x) = \lambda_{i}\quad \Longleftrightarrow \quad x \in V_i(\omega) \setminus V_{i+1}(\omega) \fa i\in\{1, \dots, p\}\,.
\end{equation*}
\section{Random attractors}\label{sec-randomattr}

A random variable $R:\Omega\rightarrow \mathbb{R}$ is called \emph{tempered} if
\[
\lim_{t \to \pm \infty} \frac{1}{|t|} \ln^{+} R(\theta_t\omega)=0 \faa \omega\in \Omega\,,
\]
see also \cite[p.~164]{a98}. A set $D\in \mathcal F\otimes \mathcal B(\R^d)$ is called \emph{tempered} if there exists a tempered random variable $R$ such that
\[
  D(\omega)\subset B_{R(\omega)}(0) \faa \omega\in\Omega\,,
\]
where $D(\omega):=\{x\in  \R^d: (\omega, x)\in D\}$. $D$ is called compact if $D(\omega)\subset \R^d$ is compact for almost all $\omega\in\Omega$.
Denote by $\mathcal D$ the set of all compact tempered sets $D\in \mathcal F\otimes \mathcal B(\R^d)$. We now define the notion of a random attractor with respect to $\mathcal D$, see also \cite[Definition~14.3]{rk04}.
\begin{definition}[Random attractor]
   A set $A \in \mathcal D$ is called a \emph{random attractor} (with respect to $\mathcal D$) if the following two properties are satisfied.
	\begin{enumerate}
		\item[(i)] $A$ is $\varphi$-invariant, i.e.
		\begin{equation*}
		\varphi(t,\omega) A(\omega) = A (\theta_t  \omega) \fa t\ge 0\mbox{ and almost all } \omega \in \Omega\,.
		\end{equation*}
		\item[(ii)] For all $D \in \mathcal D$, we have
		\begin{equation*}
		\lim_{t \to \infty} \operatorname{dist} \big(\varphi(t, \theta_{-t} \omega)D(\theta_{-t}\omega), A(\omega)\big) = 0 \faa \omega\in\Omega\,,
		\end{equation*}
        where $\operatorname{dist}(E, F):= \sup_{x\in E}\inf_{y\in F} \|x-y\|$.
	\end{enumerate}
\end{definition}

Note that we require that random attractor is measurable with respect to $\mathcal F\otimes \mathcal B(\R^d)$, in contrast to a weaker statement normally used in the literature (see also \cite[Remark~4]{crauelkloeden15}).

The existence of random attractors is proved via so-called absorbing sets. A set $B\in\mathcal D$ is called an \emph{absorbing set} if for almost all $\omega\in\Omega$ and any $D \in \mathcal D$, there exists a $T>0$ such that
\[
\varphi(t,\theta_{-t}\omega)D(\theta_{-t}\omega)\subset B(\omega)\fa t\geq T\,.
\]
A proof of the following theorem can be found in \cite[Theorem~3.5]{flandolischmalfuss96}.

\begin{theorem}[Existence of random attractors]\label{ExistencePullback}
  Suppose that $(\theta,\varphi)$ is a continuous random dynamical system with an absorbing set $B$. Then there exists a unique random attractor $A$, given by
	\[
	A(\omega)
	:=
	\bigcap_{\tau\geq 0}\overline{\bigcup_{t\geq \tau} \varphi(t,\theta_{-t}\omega)B(\theta_{-t}\omega)}
	\faa \omega\in\Omega.
	\]
	Furthermore, $ \omega \mapsto A(\omega)$ is measurable with respect to $\mathcal{F}_{-\infty}^{0}$, i.e.~the past of the system.
\end{theorem}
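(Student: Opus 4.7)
The plan is to exploit the fact that when $b > 2a$, the symmetric part of $Df(x,y)$ develops arbitrarily large positive (respectively negative) eigenvalues at points far from the origin in the direction $x = y$ (respectively $x = -y$), and to use controllability of the SDE to drive orbits from an arbitrary starting point $z$ into those expanding regions with positive probability. Concretely, I would first compute the quadratic form $\langle Df(z')\tilde r, \tilde r\rangle$ using the explicit expression for $Df$ displayed earlier in the paper. Evaluating at $z' = (w,w)$ with the unit vector $\tilde r = (0,1)$ yields $\langle Df(z')\tilde r,\tilde r\rangle = \alpha + 2(b-2a)w^2$, and evaluating at $z'' = (w,-w)$ with the same $\tilde r$ yields $\alpha - 2(b-2a)w^2$. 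Since $b-2a > 0$, choosing $w$ large makes these quantities as positive (respectively negative) as desired.

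Next I would fix $\mu > 0$ and take $w$ with $2(b-2a)w^2 \geq 2\mu$. Using Proposition~\ref{smallneighbour2}, I can choose a set $E_1 \subset \Omega$ of positive probability on which $\varphi(s,\omega,z') \in B_\epsilon(z')$ for all $s \in [0,1]$, where $\epsilon > 0$ is small enough that the perturbation of the coefficients of $Df(\varphi(s,\omega,z'))$ away from $Df(z')$ is controlled. The evolution of a tangent vector satisfies
\[
\tfrac{d}{dt}\|\Phi(t,\omega,z')v\|^2 = 2\langle Df(\varphi(t,\omega,z'))\,s_t,\,s_t\rangle\,\|\Phi(t,\omega,z')v\|^2,
\]
where $s_t$ denotes the unit vector along $\Phi(t,\omega,z')v$. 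By continuity of the linearised cocycle in $t$, starting from $v = \tilde r$ I can shrink the time horizon to some $T \in (0,1]$ so that $s_t$ stays within a prescribed $\delta$-neighbourhood of $\tilde r$ for all $t \in [0,T]$ and all $\omega \in E_1$. A careful expansion of $\langle Df(x,y)r,r\rangle$ around $(z',\tilde r)$, choosing $\epsilon,\delta$ small enough depending on $a, b, w$, yields a uniform lower bound $\langle Df(\varphi_t)s_t, s_t\rangle \geq \alpha + \mu$ on $[0,T] \times E_1$. Integration then gives $\lambda^{t,\omega,z'} \geq \alpha + \mu$ for all $t \in (0,T]$ and $\omega \in E_1$, so the essential supremum is at least $\alpha + \mu$; since $\mu$ was arbitrary it is $+\infty$.

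To upgrade this from $z'$ to a general initial point $z$, I would apply Proposition~\ref{smallneighbour2} once more to produce a positive-probability set $E_2$ on which $\varphi(s,\omega,z) \in B_\epsilon(z')$ for $s \in [t_0,T]$, with $t_0 > 0$ arbitrarily small, while remaining in a compact set on $[0,t_0]$. The contribution to $\frac{1}{t}\ln\|\Phi(t,\omega,z)v\|$ from $[0,t_0]$ is therefore bounded uniformly, so the same argument applied over $[t_0,T]$ still produces arbitrarily large finite-time exponents. The negative case is completely symmetric: replace $z'$ by $z'' = (w,-w)$ and repeat with the lower bound on $\langle Df(\varphi_t) s_t, s_t\rangle$ replaced by an upper bound $\alpha - \mu^-$, so that the infimum over unit $v$ of the finite-time exponent starting near $z''$ is at most $\alpha - \mu$.

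The main obstacle is keeping the tangent direction $s_t$ inside a controlled cone around $\tilde r$ during the short interval $[0,T]$: the off-diagonal entries of $Df(z')$ (which contain the dominant shear term $3bw^2$) are large and would, left unchecked, rotate $s_t$ away from $\tilde r$ quickly, causing the favourable quadratic form bound to be lost. Balancing $\epsilon$, $\delta$, $T$ and $w$ so that the rotation induced on $s_t$ within $[0,T]$ is small compared to the radial growth it enables is the delicate quantitative point; the smallness of $T$ (which is allowed because we only need finite-time, not asymptotic, statements) is what makes this balance possible.
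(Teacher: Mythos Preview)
Your proposal proves an entirely different statement. The theorem you were asked to address is the abstract existence result for random attractors: given a continuous random dynamical system with an absorbing set $B$, there exists a unique random attractor $A$ with fibers given by the $\omega$-limit formula, and $\omega\mapsto A(\omega)$ is $\mathcal F_{-\infty}^{0}$-measurable. This statement has nothing to do with the shear parameter $b$, the Jacobian $Df$, finite-time Lyapunov exponents, or controllability of the SDE. What you have written is essentially the paper's proof of Theorem~F (unbounded finite-time Lyapunov exponents when $b>2a$), transplanted under the wrong heading.

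For the actual statement, the paper does not give a proof at all: it simply cites \cite[Theorem~3.5]{flandolischmalfuss96}. If you want to supply an argument, you need to show (i) that the set $A(\omega)$ defined by the nested intersection is nonempty and compact (from compactness of $B(\omega)$ and the decreasing nature of the closures), (ii) $\varphi$-invariance of $A$ via the cocycle property, (iii) pullback attraction of every tempered $D\in\mathcal D$ using the absorbing property to reduce to attraction of $B$ itself, (iv) uniqueness from invariance plus attraction, and (v) $\mathcal F_{-\infty}^{0}$-measurability of $\omega\mapsto A(\omega)$, which follows because the formula for $A(\omega)$ involves only $\varphi(t,\theta_{-t}\omega)$ for $t\ge 0$ and hence only the past of the noise. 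None of this appears in your proposal.
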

\end{document}